\documentclass{article}


\newcommand{\textlineskip}{\baselineskip=11.5dd plus0.5dd minus0.2dd}


\def\qed{\hbox{${\vcenter{\vbox{
   \hrule height 0.4pt\hbox{\vrule width 0.4pt height 6pt
   \kern5pt\vrule width 0.4pt}\hrule height 0.4pt}}}$}}





\catcode`\@=11


\textwidth 28cc
\textheight 42cc
\topmargin0in
\evensidemargin 1in
\oddsidemargin 1in
\raggedbottom


\font\twrm=cmr12

\font\eightrm=cmr8


\newcounter{subsect}

\def\section#1{\par \bigskip \setcounter{subsect}{0}
\setcounter{theorem}{0} 
\setcounter{equation}{0}
\addtocounter{section}{1}\begin{center}\thesection.
\uppercase{#1} \end{center} \par \smallskip }

\def\subsection#1{\par \medskip
\addtocounter{subsect}{1}\begin{center}{ \eightrm
\thesection.\thesubsect. \uppercase{#1}} \end{center} \par
\smallskip }


\makeatletter

\makeatother

\usepackage{amsmath}
\usepackage{amsthm}
\usepackage{amssymb}
\usepackage{hyperref}
\usepackage{caption}
\usepackage{subcaption}
\usepackage{graphicx}
\usepackage{lineno}

\newtheorem{theorem}{Theorem}

\newtheorem{proposition}[theorem]{Proposition}
\newtheorem{hypothesis}[theorem]{Hypothesis}

\newtheorem{corollary}[theorem]{Corollary}
\newtheorem{definition}[theorem]{Definition}

\newtheorem{algorithm}[theorem]{Algorithm}

\newtheorem{remark}[theorem]{Remark}

\numberwithin{equation}{section}
\numberwithin{theorem}{section}

\newcommand{\abs}[1]{|#1|}
\newcommand{\set}[1]{\left\{#1\right\}}


\DeclareMathOperator{\V}{V}
\DeclareMathOperator{\E}{E}


\begin{document}

\normalsize\textlineskip
\thispagestyle{empty}

\begin{center} {\twrm
ANNUAIRE DE L'UNIVERSIT\'E DE  SOFIA ``St.~Kl.~OHRIDSKI" \\[3pt]
FACULT\'E DE MATH\'EMATIQUES ET  INFORMATIQUE

\vspace*{70pt}
\uppercase{Small minimal (3, 3)-Ramsey graphs}\\
\vspace*{0.035truein}
}
\vspace*{25pt}
{\eightrm\uppercase{Aleksandar Bikov}}
\end{center}

\vspace*{30pt}

{
\parindent0pt \footnotesize \leftskip20pt
\rightskip20pt  \baselineskip10pt
We say that $G$ is a $(3, 3)$-Ramsey graph if every $2$-coloring of the edges of $G$ forces a monochromatic triangle. The $(3, 3)$-Ramsey graph $G$ is minimal if $G$ does not contain a proper $(3, 3)$-Ramsey subgraph. In this work we find all minimal $(3, 3)$-Ramsey graphs with up to 13 vertices with the help of a computer, and we obtain some new results for these graphs. We also obtain new upper bounds on the independence number and new lower bounds on the minimum degree of arbitrary $(3, 3)$-Ramsey graphs.
\smallskip

{\bf Keywords}.  Ramsey graph, clique number, independence number, chromatic number\\[2pt]
{\bf 2000 Math.\ Subject Classification}. 05C55
\par
}

\vspace*{20pt}


\section{Introduction}

In this work only finite, non-oriented graphs without loops and multiple edges are considered. The following notations are used:

$\V(G)$ - the vertex set of $G$;

$\E(G)$ - the edge set of $G$;

$\overline{G}$ - the complement of $G$;

$\omega(G)$ - the clique number of $G$;

$\alpha(G)$ - the independence number of $G$;

$\chi(G)$ - the chromatic number of $G$;

$N_G(v), v \in \V(G)$ - the set of all vertices of G adjacent to $v$;

$d(v), v \in \V(G)$ - the degree of the vertex $v$, i.e. $d(v) = \abs{N_G(v)}$;

$G(v), v \in \V(G)$ - subgraph of $G$ induced by $N_G(v)$;

$G-v, v \in \V(G)$ - subgraph of $G$ obtained from $G$ by deleting the vertex $v$ and all edges incident to $v$;

$G-e, e \in \E(G)$ - subgraph of $G$ obtained from $G$ by deleting the edge $e$;

$\Delta(G)$ - the maximum degree of $G$;

$\delta(G)$ - the minimum degree of $G$;

$K_n$ - complete graph on $n$ vertices;

$C_n$ - simple cycle on $n$ vertices;

$G_1 + G_2$ - graph $G$ for which $\V(G) = \V(G_1) \cup \V(G_2)$ and $\E(G) = \E(G_1) \cup \E(G_2) \cup E'$, where $E' = \set{[x, y] : x \in \V(G_1), y \in \V(G_2)}$, i.e. $G$ is obtained by connecting every vertex of $G_1$ to every vertex of $G_2$.

All undefined terms can be found in \cite{Har69}.\\

Each partition
\begin{equation}
\label{equation: r-coloring}
\E(G) = E_1 \cup ... \cup E_r, E_i \cap E_j = \emptyset, i \neq j
\end{equation}
is called an $r$-coloring of the edges of $G$. We say that $H \subseteq G$ is a monochromatic subgraph of color $i$ in the $r$-coloring (\ref{equation: r-coloring}), if $\E(H) \subseteq E_i$.

Let $p$ and $q$ be positive integers, $p \geq 2$ and $q \geq 2$. The expression $G \rightarrow (p, q)$ means that for every $2$-coloring of $\E(G)$ there exists a $p$-clique of the first color or a $q$-clique of the second color. If $G \rightarrow (p, q)$, we say that $G$ is a $(p, q)$-Ramsey graph. Similarly, the expression $G \rightarrow (p_1, ..., p_r)$ is defined for the $r$-colorings of $\E(G)$.

The smallest possible integer $n$ for which $K_n \rightarrow (p, q)$ is called a Ramsey number and is denoted by $R(p, q)$. The Ramsey numbers $R(p_1, p_2, ..., p_r)$ are defined similarly.

The existence of Ramsey numbers was proved by Ramsey in \cite{Ram30}. Only a few exact values of Ramsey numbers are known (see \cite{Rad14}). In this work we will use the equality $R(3, 3) = 6$. This equality means that $K_6 \rightarrow (3, 3)$ and $K_5 \not\rightarrow (3, 3)$. It is clear, that if $\omega(G) \geq 6$, then $G \rightarrow (3, 3)$. In \cite{EH67} Erd\"os and Hajnal posed the following problem:
\begin{center}
\label{question: Erdos and Hajnal}
\emph{Is there a graph $G \rightarrow (3, 3)$ with $\omega(G) < 6$ ?}
\end{center}

\begin{figure}[h]
	\centering
	\includegraphics[height=160px,width=160px]{./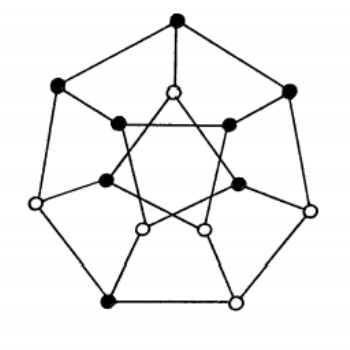}
	\caption{The complement of the P\'osa graph from \cite{GS71}}\label{figure: Posa}
\end{figure}

The first example of a graph which gives a positive answer to this question was given by P\'osa. The complement of this graph is presented on Figure \ref{figure: Posa}. P\'osa did not publish this result himself, but the graph was included in \cite{GS71}. Later, Graham \cite{Gra68} constructed the smallest possible example of such a graph, namely $K_3+C_5$. It is easy to see that the P\'osa graph contains $K_3+C_5$ (it is the subgraph induced by the black vertices on Figure \ref{figure: Posa}).

There exist $(3, 3)$-Ramsey graphs which do not contain $K_5$. These graphs have at least 15 vertices \cite{PRU99}. The first 15-vertex $(3, 3)$-Ramsey graph which does not contain $K_5$ was constructed by Nenov \cite{Nen81a}. This graph is obtained from the graph $\Gamma$ presented on Figure \ref{figure: Nenov_14} by adding a new vertex which is adjacent to all vertices of $\Gamma$.

\begin{figure}[h]
	\centering
	\includegraphics[height=210px,width=210px]{./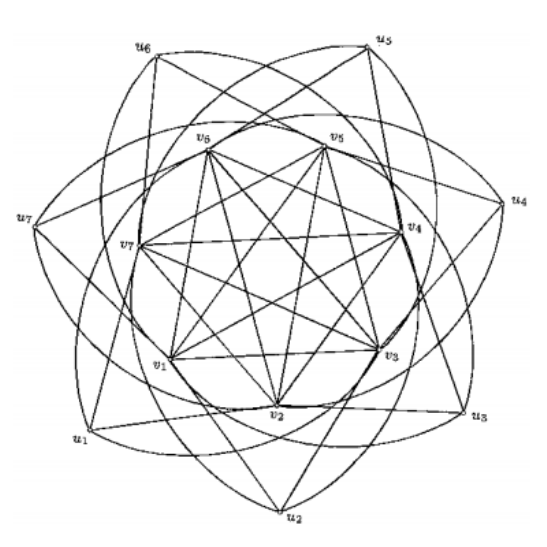}
	\caption{The Nenov graph $\Gamma$ from \cite{Nen81a}}
	\label{figure: Nenov_14}
\end{figure}

Folkman constructed a graph $G \rightarrow (3, 3)$ with $\omega(G) = 3$ \cite{Fol70}. The minimum number of vertices of such graphs is not known. To date, we know only that this minimum is between 19 and 786, \cite{RX07} and \cite{LRX12}.

Obviously, if $H$ is a $(p, q)$-Ramsey graph, then its every supergraph $G$ is also a $(p, q)$-Ramsey graph.
\begin{definition}
\label{definition: minimal (p, q)-Ramsey graph}
We say that $G$ is a minimal $(p, q)$-Ramsey graph if $G \rightarrow (p, q)$ and $H \not\rightarrow (p, q)$ for each proper subgraph $H$ of $G$.
\end{definition}

It is easy to see that $K_6$ is a minimal $(3, 3)$-Ramsey graph and there are no minimal $(3, 3)$-Ramsey graphs with 7 vertices. The only such 8-vertex graph is the Graham graph $K_3+C_5$, and there is only one such 9-vertex graph, Nenov \cite{Nen79} (see Figure \ref{figure: Nenov_9}).

\begin{figure}[h]
	\begin{minipage}{.45\textwidth}
		\centering
		\includegraphics[trim={0 470 0 0},clip,height=160px,width=160px]{./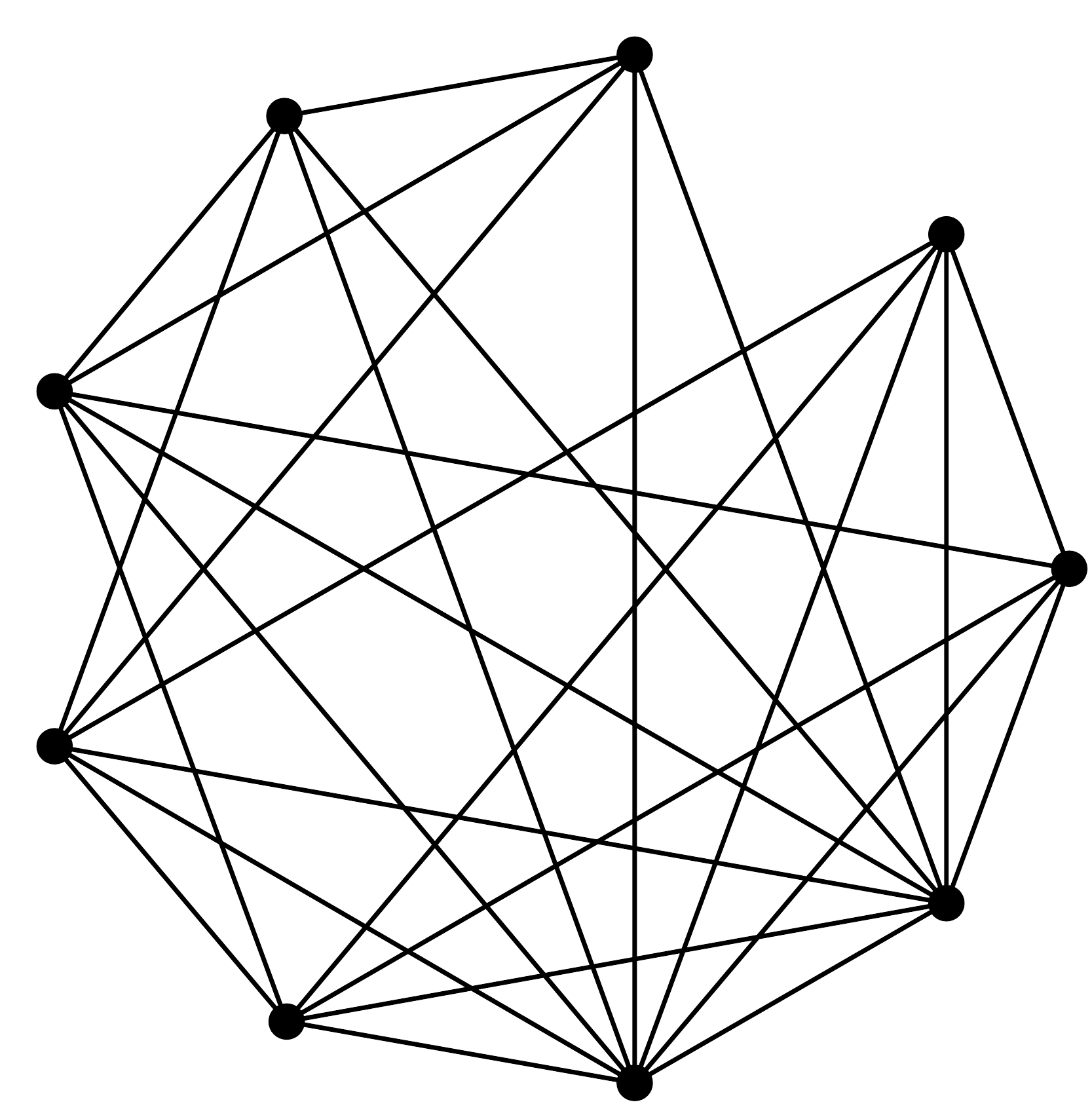}
		\caption{9-vertex minimal\hfill\break $(3, 3)$-Ramsey graph}
		\label{figure: Nenov_9}
	\end{minipage}\hfill
	\begin{minipage}{.45\textwidth}
		\centering
		\includegraphics[trim={0 470 0 0},clip,height=160px,width=160px]{./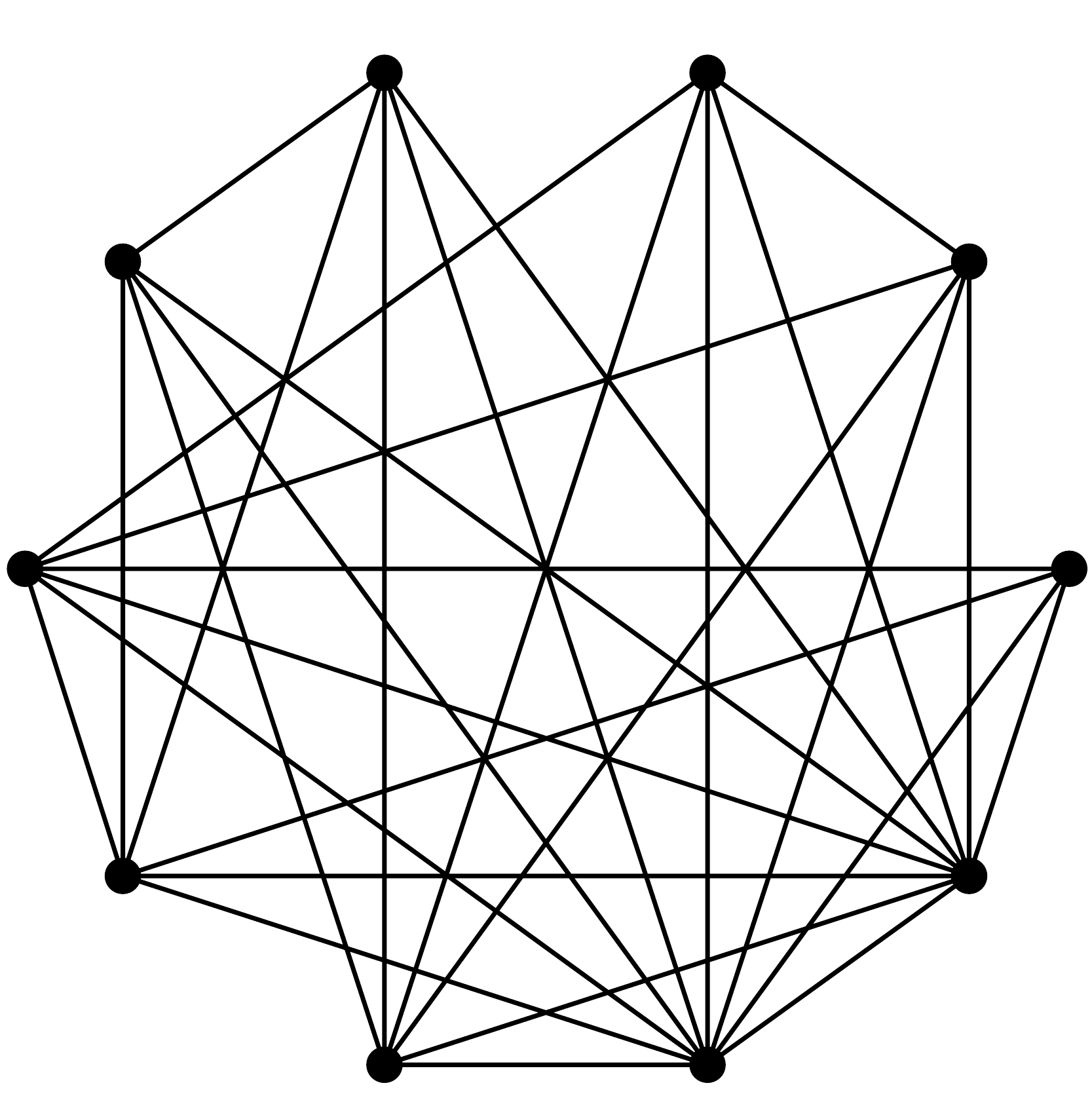}
		\caption{10-vertex minimal\hfill\break $(3, 3)$-Ramsey graph}
		\label{figure: Nenov_10}
	\end{minipage}
\end{figure}

For each pair of positive integers $p \geq 3$, $q \geq 3$ there exist infinitely many minimal $(p, q)$-Ramsey graphs \cite{BEL76}, \cite{FL06}. The simplest infinite sequence of minimal $(3, 3)$-Ramsey graphs are the graphs $K_3+C_{2r+1}, r \geq 1$. This sequence contains the already mentioned graphs $K_6$ and $K_3+C_5$. It was obtained by Nenov and Khadzhiivanov in \cite{NK79}. Later, this sequence was reobtained in \cite{BR80}, \cite{GSS95}, \cite{Sza77}.

Three 10-vertex minimal $(3, 3)$-Ramsey graphs are known. One of them is $K_3+C_7$ from the sequence $K_3+C_{2r+1}, r \geq 1$. The other two were obtained by Nenov in \cite{Nen80c} (the second graph is presented on Figure \ref{figure: Nenov_10} and the third is a subgraph of $K_1+\overline{C_9}$).

\vspace{1em}

The main goal of this work is to find new minimal $(3, 3)$-Ramsey graphs. To achieve this, we develop computer algorithms which are presented in Section 3. Using Algorithm \ref{algorithm: finding all minimal (3, 3)-Ramsey graphs on n vertices}, in Section 4 we find all minimal $(3, 3)$-Ramsey graphs with up to 12 vertices. In the next Section 5 we find all 13-vertex minimal $(3, 3)$-Ramsey graphs using Algorithm \ref{algorithm: finding n-vertex minimal (3, 3)-Ramsey graphs G for which alpha(G) geq n - k geq 1}. From the graphs found in Section 4 and Section 5 we obtain interesting corollaries, which are presented in Section 6. In Section 7 and Section 8, with the help of Algorithm \ref{algorithm: finding minimal (3, 3)-Ramsey graphs G for which alpha(G) geq abs(V(G)) - k geq 1} we obtain, accordingly, new upper bounds on the independence number and new lower bounds on the minimum degree of minimal $(3, 3)$-Ramsey graphs with an arbitrary number of vertices.

Similar computer aided research is made in \cite{JR95}, \cite{PRU99}, \cite{Col05}, \cite{CR06}, \cite{RX07}, \cite{XLS10}, \cite{LRX12} and \cite{SXP12}. We shall note that the algorithms from \cite{PRU99} were very useful to us.\\

This work is an extended version of my Master's thesis under the supervision of prof. Nedyalko Nenov. The most essential new element is Algorithm \ref{algorithm: finding minimal (3, 3)-Ramsey graphs G for which alpha(G) geq abs(V(G)) - k geq 1}, which is obtained jointly with prof. Nenov.

\section{Auxiliary results}

We will need the following results:

\begin{theorem}
\label{theorem: delta(G) geq (p-1)^2}
\cite{BEL76}\cite{FL06}
Let $G$ be a minimal $(p, p)$-Ramsey graph. Then, $\delta(G) \geq (p-1)^2$. In particular, when $p = 3$, we have $\delta(G) \geq 4$.
\end{theorem}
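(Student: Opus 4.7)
The plan is a minimality argument combined with a combinatorial partition lemma. Let $v$ be a vertex of $G$ with $d(v) = \delta(G)$. Since $G$ is minimal $(p,p)$-Ramsey, $G - v$ is not $(p,p)$-Ramsey, so there is a $2$-coloring $\chi$ of $\E(G-v)$ with no monochromatic $K_p$. Any extension of $\chi$ to $\E(G)$ is specified by a partition $N_G(v) = R \cup B$ (the edge $vu$ receives color red iff $u \in R$). Since $\chi$ is already good on $G-v$, a monochromatic $K_p$ in the extended coloring must contain $v$: a red $K_p$ through $v$ exists exactly when $R$ contains a red $K_{p-1}$ of $\chi$ inside $G(v)$, and symmetrically for blue. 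Using $G \to (p,p)$ to forbid good extensions, no partition $N_G(v) = R \cup B$ can simultaneously keep $R$ free of red $K_{p-1}$'s and $B$ free of blue $K_{p-1}$'s.

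It then suffices to prove the following contrapositive partition lemma: if no such partition of $N_G(v)$ exists, then $|N_G(v)| \geq (p-1)^2$. The plan is to extract $p-1$ pairwise disjoint red $(p-1)$-cliques from $N_G(v)$ by iteration. Starting from $T_0 = \emptyset$ and having produced $T_k = S_1 \sqcup \cdots \sqcup S_k$ consisting of $k$ pairwise disjoint red $K_{p-1}$'s, I apply the hypothesis to the partition $B = T_k$, $R = N_G(v) \setminus T_k$. All edges inside each $S_i$ are red in $\chi$, so any blue $K_{p-1}$ inside $T_k$ would have to take one vertex from each of $p-1$ distinct $S_i$'s, forcing $k \geq p-1$. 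Hence while $k \leq p-2$ the blue side is vacuously fine, so by hypothesis $R = N_G(v) \setminus T_k$ must contain a red $K_{p-1}$, which I take as $S_{k+1}$. Iterating up to $k = p-1$ yields $p-1$ pairwise disjoint red $(p-1)$-cliques inside $N_G(v)$, whose union has exactly $(p-1)^2$ vertices.

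Combining the two steps, if $d(v) < (p-1)^2$ the lemma would give a valid partition of $N_G(v)$, extending $\chi$ to a good $2$-coloring of $G$ and contradicting $G \to (p,p)$. Hence $\delta(G) \geq (p-1)^2$, and for $p=3$ in particular $\delta(G) \geq 4$. The delicate part is the iterative construction in the lemma: the observation that the edges inside each $S_i$ are monochromatically red is what prevents a blue $K_{p-1}$ from appearing inside $T_k$ before $k$ reaches $p-1$, which is precisely what forces each successive partition to produce a \emph{new} disjoint red clique rather than merely reshuffling the previous ones.
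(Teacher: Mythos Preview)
The paper does not prove this theorem; it is quoted from \cite{BEL76} and \cite{FL06} as a known result. Your argument is correct and is essentially the classical Burr--Erd\H{o}s--Lov\'asz proof: fix a $(p,p)$-free $2$-coloring of $G-v$, observe that every red/blue split of $N_G(v)$ must place a red $K_{p-1}$ on the red side or a blue $K_{p-1}$ on the blue side, and then greedily peel off disjoint red $K_{p-1}$'s by noting that a union of at most $p-2$ red $(p-1)$-cliques cannot host a blue $K_{p-1}$ (any such clique meets each $S_i$ in at most one vertex). The only cosmetic remark is that ``$R$ contains a red $K_{p-1}$ of $\chi$ inside $G(v)$'' would read more precisely as ``the subgraph of $G(v)$ induced on $R$ contains a red $K_{p-1}$,'' but the meaning is clear.
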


\begin{definition}
\label{definition: Sperner graph}
We say that $G$ is a Sperner graph if $N_G(u) \subseteq N_G(v)$ for some pair of vertices $u, v \in \V(G)$.
\end{definition}

\begin{proposition}
\label{proposition: minimal (p, q)-Ramsey graph is not Sperner}
If $G$ is a minimal $(p, q)$-Ramsey graph, then $G$ is not a Sperner graph.
\end{proposition}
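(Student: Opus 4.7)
The plan is to argue by contradiction. Assume $G$ is a minimal $(p,q)$-Ramsey graph that is Sperner, so there exist distinct vertices $u,v \in V(G)$ with $N_G(u) \subseteq N_G(v)$. First I would observe that $u$ and $v$ are non-adjacent: indeed, if $v \in N_G(u)$ then $v \in N_G(v)$, which is impossible since loops are excluded. So $u \notin N_G(v)$ as well.

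Next I would use minimality applied to the proper subgraph $H := G - u$. Since $G$ is minimal, $H \not\rightarrow (p,q)$, so there is a $2$-coloring of $E(H)$ with no red $K_p$ and no blue $K_q$. I would extend this to a $2$-coloring of $E(G)$ by defining, for each edge $[u,w]$ (where $w \in N_G(u) \subseteq N_G(v)$), its color to be the color of $[v,w]$ in the coloring of $H$. This is well-defined exactly because of the containment $N_G(u) \subseteq N_G(v)$.

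The key claim is then that the extended coloring has no red $K_p$ and no blue $K_q$, which will contradict $G \to (p,q)$. Consider a putative red $K_p$, say on vertex set $K$. If $u \notin K$ then $K \subseteq V(H)$, contradicting our choice of coloring on $H$, so $u \in K$. Every other vertex of $K$ lies in $N_G(u)$, hence in $N_G(v)$; in particular $v \notin K$, so $K' := (K \setminus \{u\}) \cup \{v\}$ has $p$ distinct vertices. The edges inside $K \setminus \{u\}$ are unchanged and still red, while each edge $[v,w]$ for $w \in K \setminus \{u\}$ has the same color as $[u,w]$, which is red. Thus $K'$ is a red $K_p$ in $H$, contradicting our assumption. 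The same swap argument handles a blue $K_q$. Hence $G \not\to (p,q)$, a contradiction, and therefore $G$ cannot be Sperner.

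The argument is essentially mechanical once the swap $u \leftrightarrow v$ is identified; the only place to be careful is verifying that $u$ and $v$ are non-adjacent (so $v \notin K$ and $K'$ has the right size) and that every neighbor of $u$ is also a neighbor of $v$ (so the color transfer is well-defined). I do not foresee a real obstacle beyond this bookkeeping.
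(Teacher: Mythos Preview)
Your proof is correct and follows essentially the same approach as the paper: assume $N_G(u)\subseteq N_G(v)$, take a $(p,q)$-free $2$-coloring of $G-u$ (which exists by minimality), and extend it by giving each edge $[u,w]$ the color of $[v,w]$. Your version is more carefully written---you explicitly note that $u$ and $v$ are non-adjacent and spell out the swap argument $K\mapsto (K\setminus\{u\})\cup\{v\}$---whereas the paper simply asserts the resulting coloring is good, but the underlying idea is identical.
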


\begin{proof}
Suppose the opposite is true, and let $u, v \in \V(G)$ be such that $N_G(u) \subseteq N_G(v)$. We color the edges of $G - u$ with two colors in such a way that there is no monochromatic $p$-clique of the first color and no monochromatic $q$-clique of the second color. After that, for each vertex $w \in N_G(u)$ we color the edge $[u, w]$ with the same color as the edge $[v, w]$. We obtain a 2-coloring of the edges of $G$ with no monochromatic $p$-cliques of the first color and no monochromatic $q$-cliques of the second color.
\end{proof}

\begin{theorem}
\label{theorem: F_e(3, 3; 5) = 15}
\cite{PRU99}
Let $G$ be a $(3, 3)$-Ramsey graph and $G \neq K_6$. If $|V(G)| \leq 14$, then $\omega(G) = 5$.
\end{theorem}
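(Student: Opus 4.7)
The statement actually packages two claims: the upper bound $\omega(G) \le 5$ and the lower bound $\omega(G) \ge 5$. The upper bound is immediate if one reads minimality into the hypothesis: if $\omega(G) \ge 6$ then $K_6 \subseteq G$, and since $K_6$ is itself $(3,3)$-Ramsey, the minimality of $G$ forces $G = K_6$, contradicting the assumption. The real content is the lower bound, asserting that no $K_5$-free $(3,3)$-Ramsey graph has fewer than $15$ vertices.

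My plan for the lower bound is an exhaustive computer search in the spirit of \cite{PRU99}. I would enumerate, up to isomorphism, all graphs $G$ on $n \le 14$ vertices with $\omega(G) \le 4$, and for each of them decide whether $G \rightarrow (3,3)$ by trying to construct a $2$-coloring of $\E(G)$ with no monochromatic triangle; the failure of the search certifies $G$ as $(3,3)$-Ramsey. Since I only need to rule out the existence of any such $G$, I can restrict to candidates that could occur as a \emph{minimal} $K_5$-free $(3,3)$-Ramsey graph: Proposition~\ref{proposition: minimal (p, q)-Ramsey graph is not Sperner} lets me discard Sperner graphs, Theorem~\ref{theorem: delta(G) geq (p-1)^2} lets me impose $\delta(G) \ge 4$, and I can further throw out any $G$ some of whose single-edge deletions is already Ramsey.

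The main obstacle is raw combinatorial size: even after these restrictions the number of candidate graphs on $13$ and $14$ vertices is large, and the Ramsey test is itself a nontrivial backtracking problem. The crux is therefore an efficient interleaving of (a) canonical-form graph generation with isomorphism rejection \emph{during} construction, so that each isomorphism class is produced exactly once and the partial-graph stage can already be pruned by $\omega$, $\delta$, and non-Sperner tests; and (b) a pruned depth-first search over $2$-colorings of $\E(G)$ that kills a partial coloring as soon as it forces a monochromatic triangle, exploiting the fact that each uncolored edge lying in many almost-monochromatic triangles is heavily constrained. Correctness of the theorem then reduces to verifying that this search returns an empty list for every $n \le 14$, which is exactly the computational content of \cite{PRU99}.
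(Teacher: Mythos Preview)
The paper does not prove this theorem; it is imported wholesale from \cite{PRU99} and used as a black box, so there is no in-paper argument to compare your sketch against. What you have written is essentially a blueprint for the kind of computation \cite{PRU99} carries out.

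Two comments. First, you are right that the upper bound $\omega(G)\le 5$ fails without minimality: $K_7$ (or any proper supergraph of $K_6$ on at most $14$ vertices) is a $(3,3)$-Ramsey graph with $G\neq K_6$ and $\omega(G)\ge 6$. The substantive content of the cited result is only the lower bound, equivalently $F_e(3,3;5)=15$, and the paper only ever invokes the theorem either in that form (``every $(3,3)$-Ramsey graph on at most $14$ vertices contains a $5$-clique'') or for minimal graphs, where your one-line argument supplies the upper bound.

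Second, on feasibility: the paper itself observes that the direct enumeration you describe (its Algorithm~\ref{algorithm: finding all minimal (3, 3)-Ramsey graphs on n vertices}) is already impractical for $n\ge 13$ even with the $\omega=5$, $\delta\ge 4$, non-Sperner filters in place; Table~\ref{table: steps in finding all minimal (3, 3)-Ramsey graphs with up to 12 vertices} shows $\sim 4.8\times 10^{10}$ graphs at $n=12$ before any arrow test. Passing to $\omega\le 4$ does not rescue $n=13,14$ by brute force. The computation in \cite{PRU99} uses a more structured generation --- building candidate graphs by extending smaller pieces, in the spirit of Algorithm~\ref{algorithm: finding minimal (3, 3)-Ramsey graphs G for which alpha(G) geq abs(V(G)) - k geq 1} here --- rather than the generate-then-filter pipeline you outline. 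So while your plan is logically sound, the ``crux'' as you have stated it (canonical generation with local pruning) would not, by itself, terminate at $n=14$; the missing ingredient is a recursive construction that never materialises the full search space.
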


\begin{figure}[h]
	\centering
	\includegraphics[trim={0 470 0 0},clip,height=160px,width=160px]{./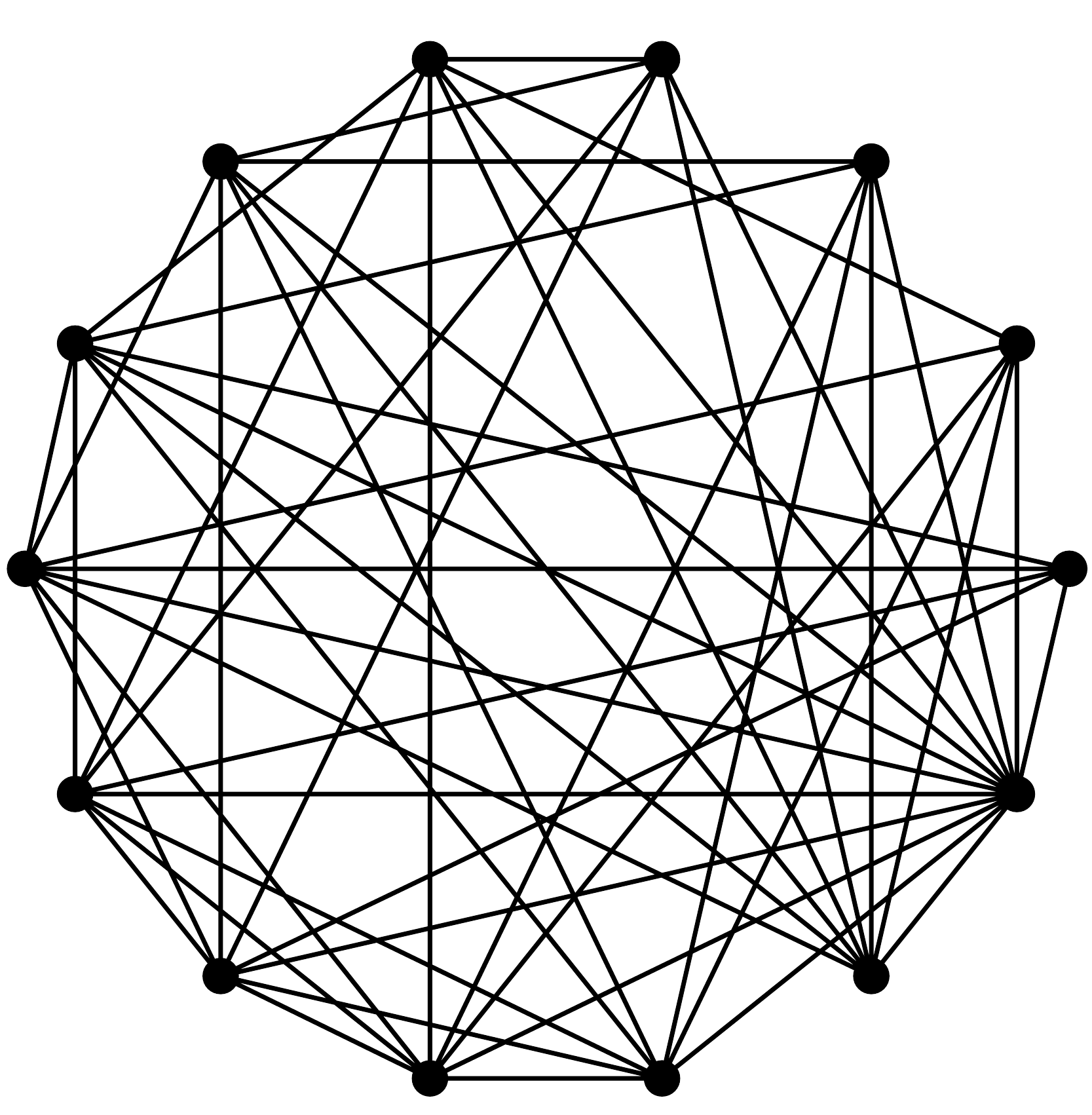}
	\caption{14-vertex minimal $(3, 3)$-Ramsey graph with a single 5 clique}
	\label{figure: 14_1xk5}
\end{figure}

According to Theorem \ref{theorem: F_e(3, 3; 5) = 15}, every $(3, 3)$-Ramsey graph $G$ with no more than 14 vertices contains a 5-clique. There exist 14-vertex $(3, 3)$-Ramsey graphs containing only a single 5-clique, an example of such a graph is presented on Figure \ref{figure: 14_1xk5}. The graph on Figure \ref{figure: 14_1xk5} is obtained with the help of the only 15-vertex bicritical $(3, 3)$-Ramsey graph with clique number 4 from \cite{PRU99}. First, by removing a vertex from the bicritical graph, we obtain 14-vertex graphs without 5 cliques. After that, by adding edges to the obtained graphs, we find a 14-vertex $(3, 3)$-Ramsey graph with a single 5-clique whose subgraph is the minimal $(3, 3)$-Ramsey graph on Figure \ref{figure: 14_1xk5}. Let us note that in \cite{PRU99} they obtain all 15-vertex $(3, 3)$-Ramsey graphs with clique number 4, and with the help of these graphs, one can find more examples of 14-vertex $(3, 3)$-Ramsey graphs.

\begin{theorem}
\label{theorem: chi(G) geq R(p, q)}
\cite{Lin72}
Let $G$ be a graph and $G \rightarrow (p, q)$. Then, $\chi(G) \geq R(p, q)$. In particular, if $G \rightarrow (3, 3)$, then $\chi(G) \geq 6$.
\end{theorem}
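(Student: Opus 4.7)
The plan is to argue by contraposition: assume $\chi(G) < R(p,q)$ and construct a 2-coloring of $E(G)$ that avoids a red $K_p$ and a blue $K_q$, thereby contradicting $G \rightarrow (p,q)$.

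Concretely, suppose $\chi(G) = r < R(p,q)$ and fix a proper vertex coloring with color classes $V_1, \ldots, V_r$, so each $V_i$ is an independent set of $G$. By the definition of $R(p,q)$ (and the assumption $r < R(p,q)$), we have $K_r \not\rightarrow (p,q)$, so we can pick a 2-coloring $c$ of $E(K_r)$ with no red $K_p$ and no blue $K_q$. Transfer this to $E(G)$ by the rule: for an edge $[u,v] \in E(G)$ with $u \in V_i$ and $v \in V_j$, note $i \neq j$ because $V_i$ is independent, and assign to $[u,v]$ the same color as the edge $\{i,j\}$ in $K_r$.

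Next I would verify that this 2-coloring of $E(G)$ contains no monochromatic $K_p$ in the first color and no monochromatic $K_q$ in the second. Given any monochromatic clique in $G$, its vertices lie in pairwise distinct classes $V_{i_1}, \ldots, V_{i_s}$ (again since each $V_i$ is independent in $G$), and the corresponding indices $i_1, \ldots, i_s$ induce a monochromatic clique of the same color and size in the auxiliary coloring $c$ of $K_r$. Hence a red $K_p$ in $G$ would yield a red $K_p$ in $K_r$, and similarly for a blue $K_q$; both are excluded by the choice of $c$. This contradicts $G \rightarrow (p,q)$, so $\chi(G) \geq R(p,q)$, and the special case $\chi(G) \geq 6$ follows from $R(3,3) = 6$.

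There is no real obstacle here; the only subtle point is the observation that the endpoints of any edge of $G$ must lie in different color classes, which is exactly what lets the coloring of $K_r$ be pulled back to $E(G)$ in a well-defined way. Once that is in place, the inheritance of monochromatic cliques from $G$ to $K_r$ is immediate.
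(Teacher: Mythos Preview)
Your argument is correct and is the standard proof of this classical result. Note that the paper does not actually prove this theorem; it merely quotes it with a citation to \cite{Lin72}, so there is no ``paper's own proof'' to compare against---your contrapositive construction via pulling back a $(p,q)$-free $2$-coloring of $K_r$ along a proper $r$-coloring of $G$ is exactly the expected argument.
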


\begin{corollary}
\label{corollary: chi(H) geq 5}
Let $G \rightarrow (3, 3)$, let $v_1, ..., v_s$ be independent vertices of $G$ and $H = G - \set{v_1, ..., v_s}$. Then, $\chi(H) \geq 5$.
\end{corollary}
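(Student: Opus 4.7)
The plan is to proceed by contradiction, using Theorem \ref{theorem: chi(G) geq R(p, q)} as the key tool. Since $G \rightarrow (3,3)$, that theorem immediately gives $\chi(G) \geq R(3,3) = 6$, so any proper coloring of $G$ requires at least $6$ colors. The strategy is to show that if $\chi(H) \leq 4$, then we can build a proper $5$-coloring of $G$, which would contradict this lower bound.

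More concretely, I would suppose for contradiction that $\chi(H) \leq 4$ and fix a proper coloring $c : \V(H) \to \{1, 2, 3, 4\}$. I would then extend $c$ to a coloring $c'$ of $\V(G)$ by setting $c'(v) = c(v)$ for every $v \in \V(H)$ and $c'(v_i) = 5$ for $i = 1, \ldots, s$. Because $v_1, \ldots, v_s$ are pairwise non-adjacent in $G$ (they form an independent set), no edge of $G$ joins two vertices colored $5$. Every other edge of $G$ either lies entirely inside $H$, where $c$ is proper, or joins some $v_i$ (colored $5$) to a vertex of $H$ (colored in $\{1,2,3,4\}$); in either case its endpoints receive distinct colors. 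Hence $c'$ is a proper $5$-coloring of $G$, giving $\chi(G) \leq 5$.

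This contradicts $\chi(G) \geq 6$, so our assumption was false and $\chi(H) \geq 5$. There is no real obstacle here; the only subtlety worth stating carefully is why the extension $c'$ remains a proper coloring, and that uses exactly the hypothesis that the removed vertices form an independent set (otherwise assigning them a single new color would fail). The argument is a direct application of Theorem \ref{theorem: chi(G) geq R(p, q)} combined with the elementary fact that $\chi(G) \leq \chi(H) + 1$ whenever $G \setminus \V(H)$ is an independent set.
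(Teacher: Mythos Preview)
Your proof is correct and is exactly the intended argument: the paper states this as an immediate corollary of Theorem~\ref{theorem: chi(G) geq R(p, q)} without giving a separate proof, and the underlying reasoning is precisely the observation that $\chi(G)\le\chi(H)+1$ when $G\setminus\V(H)$ is independent, combined with $\chi(G)\ge 6$.
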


\begin{theorem}
\label{theorem: alpha(G(v)) leq d(v) - 3}
Let $G$ be a minimal $(3, 3)$-Ramsey graph. Then, for each vertex $v \in \V(G)$ we have $\alpha(G(v)) \leq d(v) - 3$.
\end{theorem}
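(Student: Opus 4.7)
The plan is to argue by contradiction: assume $\alpha(G(v)) \ge d(v)-2$ for some $v \in V(G)$, and then extend a good 2-coloring of $G-v$ to a good 2-coloring of $G$, contradicting $G \to (3,3)$. Minimality enters at exactly one point: because $G-v$ is a proper subgraph, $G-v \not\to (3,3)$, so there exists a 2-coloring of $E(G-v)$ with no monochromatic triangle to serve as the starting point.

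Write $N_G(v) = I \cup T$, where $I$ is an independent set of size at least $d(v)-2$ and $T = N_G(v) \setminus I$ has cardinality at most $2$. The key observation is that any monochromatic triangle in $G$ that uses $v$ must use an edge inside $N_G(v)$, and since $I$ is independent in $G$, every such edge is either inside $T$ or joins $T$ to $I$. This severely restricts how a triangle through $v$ can arise, which is precisely what makes the extension feasible.

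I would then split into cases according to $|T|$. If $|T| \le 1$, color all edges from $v$ to $I$ blue and (if present) the single edge from $v$ to $T$ red; no red triangle through $v$ is possible because $v$ has only one red neighbor, and no blue triangle through $v$ is possible because $I$ is independent. If $|T|=2$ with $T=\{u_1,u_2\}$ and $[u_1,u_2] \notin E(G)$, color both $[v,u_1],[v,u_2]$ red and all $[v,w]$ for $w \in I$ blue; again no red triangle (no red edge inside $N_G(v)$) and no blue triangle (since $I$ is independent). The only delicate case is $|T|=2$ with $[u_1,u_2] \in E(G)$: supposing without loss of generality that $[u_1,u_2]$ is red in the fixed coloring of $G-v$, color $[v,u_1]$ and $[v,u_2]$ blue, and for each $w \in I$ color $[v,w]$ red if at least one of $[u_1,w],[u_2,w]$ is blue, and blue otherwise. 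A direct check rules out monochromatic triangles through $v$: a red triangle would require two red edges at $v$ going into $I$ together with a red edge inside $I$, which is impossible since $I$ is independent; a blue triangle on $\{v,u_1,u_2\}$ is blocked by the red edge $[u_1,u_2]$; a blue triangle $\{v,u_i,w\}$ would force $[v,w]$ blue, but the rule makes $[v,w]$ blue only when both $[u_j,w]$ are non-blue; and a blue triangle $\{v,w_1,w_2\}$ is again blocked by the independence of $I$.

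The main (and only) obstacle is the subcase $|T|=2$ with $[u_1,u_2] \in E(G)$; I would expect the routine cases to go through in one line each, while this one requires the coloring rule on $I$ that is tuned to the color of $[u_1,u_2]$. Once all cases are settled, the contradiction $G \not\to (3,3)$ is immediate, completing the proof.
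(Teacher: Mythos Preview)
Your argument is correct and follows the same strategy as the paper: assume $\alpha(G(v))\ge d(v)-2$, take a $(3,3)$-free $2$-coloring of $G-v$ (which exists by minimality), and extend it across the edges at $v$ using that all but at most two neighbours of $v$ form an independent set. The only difference is that your treatment of the case $|T|=2$ with $[u_1,u_2]\in E(G)$ is more elaborate than necessary: the paper simply colours \emph{both} edges $[v,u_1],[v,u_2]$ with the colour opposite to that of $[u_1,u_2]$, and every edge $[v,w]$ with $w\in I$ with the remaining colour; then no monochromatic triangle through $v$ can use two vertices of $I$ (independence), nor $u_1,u_2$ (wrong colour on $[u_1,u_2]$), nor one $u_i$ and one $w\in I$ (the edges $[v,u_i]$ and $[v,w]$ have different colours). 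Your per-vertex rule on $I$ works, but it is not needed.
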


\begin{proof}
Suppose the opposite is true, and let $A \subseteq N_G(v)$ be an independent set in $G(v)$ such that $|A| = d(v) - 2$. Let $a, b \in N_G(v) \setminus A$. Consider a 2-coloring of the edges of $G - v$ in which there are no monochromatic triangles. We color the edges $[v, a]$ and $[v, b]$ with the same color in such a way that there is no monochromatic triangle (if $a$ and $b$ are adjacent, we chose the color of $[v, a]$ and $[v, b]$ to be different from the color of $[a, b]$, and if $a$ and $b$ are not adjacent, then we chose an arbitrary color for $[v, a]$ and $[v, b]$). We color the remaining edges incident to $v$ with the other color, which is different from the color of $[v, a]$ and $[v, b]$. Since $N_G(v) \setminus \{a, b\} = A$ is and independent set, we obtain a 2-coloring of the edges of $G$ without monochromatic triangles, which is a contradiction.
\end{proof}

\begin{corollary}
\label{corollary: if d(v) = 4, then G(v) = K_4} 
Let $G$ be a minimal $(3, 3)$-Ramsey graph and $d(v) = 4$ for some vertex $v \in \V(G)$. Then, $G(v) = K_4$.
\end{corollary}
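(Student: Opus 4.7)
The plan is to observe that this statement follows almost immediately from Theorem \ref{theorem: alpha(G(v)) leq d(v) - 3}, applied to the hypothesis $d(v) = 4$.

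First, I would invoke Theorem \ref{theorem: alpha(G(v)) leq d(v) - 3} for the given vertex $v$ to conclude
\[
\alpha(G(v)) \leq d(v) - 3 = 4 - 3 = 1.
\]
Next, since $|V(G(v))| = |N_G(v)| = d(v) = 4$, the induced subgraph $G(v)$ is a graph on exactly four vertices. A graph on four vertices with independence number at most $1$ must contain no pair of non-adjacent vertices, which forces every pair of vertices in $N_G(v)$ to be joined by an edge; equivalently, $G(v) = K_4$.

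There is essentially no obstacle here: the corollary is a direct numerical specialization of the preceding theorem, and the only thing to observe beyond that is the trivial fact that a graph on $n$ vertices with $\alpha \leq 1$ is complete. I would therefore present the proof as a two-line deduction rather than attempting to extract any further structural information.
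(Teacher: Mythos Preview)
Your proof is correct and matches the paper's intended argument exactly: the paper states this as an immediate corollary of Theorem~\ref{theorem: alpha(G(v)) leq d(v) - 3} without giving a separate proof, and your two-line deduction (applying $\alpha(G(v)) \leq d(v) - 3 = 1$ to a $4$-vertex graph) is precisely the reasoning implied.
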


\section{Algorithms}

In this section, the computer algorithms used in this work are presented.

The first algorithm is appropriate for finding all minimal $(3, 3)$-Ramsey graphs with a small number of vertices. 

\begin{algorithm}
\label{algorithm: finding all minimal (3, 3)-Ramsey graphs on n vertices}
Finding all minimal $(3, 3)$-Ramsey graphs with $n$ vertices, where $n$ is fixed and $7 \leq n \leq 14$.

1. Generate all n-vertex non-isomorphic graphs with minimum degree at least 4, and denote the obtained set by $\mathcal{B}$.

2. Remove from $\mathcal{B}$ all Sperner graphs.

3. Remove from $\mathcal{B}$ all graphs with clique number not equal to 5.

4. Remove from $\mathcal{B}$ all graphs with chromatic number less than 6.

5. Remove from $\mathcal{B}$ all graphs which are not $(3, 3)$-Ramsey graphs.

6. Remove from $\mathcal{B}$ all graphs which are not minimal $(3, 3)$-Ramsey graphs.
\end{algorithm}

\begin{theorem}
\label{theorem: finding all minimal (3, 3)-Ramsey graphs on n vertices}
Fix $n \in \set{7, ..., 14}$. Then, after executing Algorithm \ref{algorithm: finding all minimal (3, 3)-Ramsey graphs on n vertices}, $\mathcal{B}$ consists of all $n$-vertex minimal $(3, 3)$-Ramsey graphs.
\end{theorem}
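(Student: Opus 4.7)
My plan is to verify that each of the six filtering steps of Algorithm~\ref{algorithm: finding all minimal (3, 3)-Ramsey graphs on n vertices} preserves every $n$-vertex minimal $(3,3)$-Ramsey graph, while the combination of all six steps eliminates everything else. The reverse inclusion (that every graph remaining in $\mathcal{B}$ at the end is a minimal $(3,3)$-Ramsey graph on $n$ vertices) will be immediate from Steps 5 and 6, which are definitional filters. The real work is therefore to show, for each of Steps 1--4, that no minimal $(3,3)$-Ramsey graph on $n \in \{7,\ldots,14\}$ vertices gets discarded.

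I would handle Step 1 by invoking Theorem~\ref{theorem: delta(G) geq (p-1)^2}: every minimal $(3,3)$-Ramsey graph satisfies $\delta(G) \geq (3-1)^2 = 4$, so the pool generated in Step 1 contains them all. Step 2 is covered by Proposition~\ref{proposition: minimal (p, q)-Ramsey graph is not Sperner}. For Step 3, I would fix a minimal $(3,3)$-Ramsey graph $G$ on $n \in \{7,\ldots,14\}$ vertices and argue $\omega(G) = 5$ as follows. Since $K_6 \rightarrow (3,3)$ and $n \geq 7$, the inequality $\omega(G) \geq 6$ would force $K_6$ to be a proper $(3,3)$-Ramsey subgraph of $G$, contradicting minimality; hence $\omega(G) \leq 5$. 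On the other hand, Theorem~\ref{theorem: F_e(3, 3; 5) = 15} gives $\omega(G) = 5$ for any $(3,3)$-Ramsey graph $G \neq K_6$ with $|V(G)| \leq 14$, so $\omega(G) \geq 5$. Combining, $\omega(G) = 5$, and $G$ survives Step 3. Step 4 follows from Theorem~\ref{theorem: chi(G) geq R(p, q)}: any $(3,3)$-Ramsey graph has $\chi(G) \geq R(3,3) = 6$.

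Steps 5 and 6 then keep exactly the $(3,3)$-Ramsey graphs and, among those, the minimal ones. Putting these together will yield the theorem. Theoretically the proof is short and essentially a bookkeeping exercise over the auxiliary results of Section~2; there is no conceptual obstacle. The genuine difficulty is computational rather than logical: Step 1 requires an enumeration of $n$-vertex graphs with $\delta \geq 4$ up to isomorphism (for which I would use a \texttt{nauty}-style generator), while Steps 5--6 need a reliable and fast subroutine for deciding $G \rightarrow (3,3)$. The filters in Steps 1--4 are essential precisely in order to shrink the candidate pool before this expensive arrowing test is invoked; without them, even $n=14$ would be out of reach.
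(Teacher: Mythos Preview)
Your proposal is correct and follows essentially the same approach as the paper: the reverse inclusion is handled by the definitional Steps 5--6, and the forward inclusion is obtained by invoking Theorem~\ref{theorem: delta(G) geq (p-1)^2}, Proposition~\ref{proposition: minimal (p, q)-Ramsey graph is not Sperner}, Theorem~\ref{theorem: F_e(3, 3; 5) = 15}, and Theorem~\ref{theorem: chi(G) geq R(p, q)} to justify Steps 1--4, exactly as the paper does. Your extra remark that $n\geq 7$ forces $G\neq K_6$ (needed to invoke Theorem~\ref{theorem: F_e(3, 3; 5) = 15}) is a small clarification the paper leaves implicit, and your separate argument that minimality gives $\omega(G)\leq 5$ is correct but redundant, since Theorem~\ref{theorem: F_e(3, 3; 5) = 15} already yields $\omega(G)=5$ directly.
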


\begin{proof}
Step 6 guaranties that $\mathcal{B}$ contains only minimal $(3, 3)$-Ramsey graphs with $n$ vertices. Let $G$ be an arbitrary $n$-vertex minimal $(3, 3)$-Ramsey graph. We will prove that $G \in \mathcal{B}$. By Theorem \ref{theorem: delta(G) geq (p-1)^2}, $\delta(G) \geq 4$, and by Theorem \ref{proposition: minimal (p, q)-Ramsey graph is not Sperner}, $G$ is not a Sperner graph. Since $\abs{V(G)} \leq 14$, by Theorem \ref{theorem: F_e(3, 3; 5) = 15} we have $\omega(G) = 5$. By Theorem \ref{theorem: chi(G) geq R(p, q)}, $\chi(G) \geq 6$. Therefore, after step 4, $G \in \mathcal{B}$.
\end{proof}

In section 4 of this work we use Algorithm \ref{algorithm: finding all minimal (3, 3)-Ramsey graphs on n vertices} to obtain all $(3, 3)$-Ramsey graphs with up to 12 vertices. Algorithm \ref{algorithm: finding all minimal (3, 3)-Ramsey graphs on n vertices} is not appropriate in the cases $n \geq 13$, because the number of graphs generated in step 1 is too big. To find the 13-vertex minimal $(3, 3)$-Ramsey graphs, we will use Algorithm \ref{algorithm: finding n-vertex minimal (3, 3)-Ramsey graphs G for which alpha(G) geq n - k geq 1}, which is defined below.

In order to present the next algorithms we will need the following definitions and auxiliary propositions:

We say that a 2-coloring of the edges of a graph is $(3, 3)$-free if it has no monochromatic triangles.

\begin{definition}
\label{definition: marked vertex set}
Let $G$ be a graph and $M \subseteq \V(G)$. Let $G_1$ be a graph which is obtained by adding a new vertex $v$ to $G$ such that $N_{G_1}(v) = M$. We say that $M$ is a marked vertex set in $G$ if there exists a $(3, 3)$-free $2$-coloring of the edges of $G$ which cannot be extended to a $(3, 3)$-free $2$-coloring of the edges of $G_1$.
\end{definition}

It is clear that if $G \rightarrow (3, 3)$, then there are no marked vertex sets in $G$. The following proposition is true:

\begin{proposition}
\label{proposition: marked vertex sets in (3, 3)-Ramsey graphs}
Let $G$ be a minimal $(3, 3)$-Ramsey graph, let $v_1, ..., v_s$ be independent vertices of $G$ and $H = G - \set{v_1, ..., v_s}$. Then, $N_G(v_i), i = 1, ..., s$, are marked vertex sets in $H$.
\end{proposition}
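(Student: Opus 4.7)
The plan is to prove this by a contradiction-and-recombine argument, treating each $i \in \{1,\dots,s\}$ separately. The key insight is that because $v_1,\dots,v_s$ form an independent set in $G$, each $v_i$ interacts with the rest of $G$ only through its edges to $N_G(v_i)\subseteq V(H)$, so constraints on extensions for different $v_i$'s are decoupled.

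Fix $i$. Since $G$ is a minimal $(3,3)$-Ramsey graph, the proper subgraph $G-v_i$ is not $(3,3)$-Ramsey, so there exists a $(3,3)$-free 2-coloring $c$ of $E(G-v_i)$. Let $c'$ be its restriction to $E(H)$; this is automatically a $(3,3)$-free 2-coloring of $H$. I claim $c'$ witnesses that $N_G(v_i)$ is a marked vertex set in $H$, i.e., that $c'$ cannot be extended to a $(3,3)$-free 2-coloring of $H_1$, where $H_1$ is obtained from $H$ by adding a new vertex $v$ with $N_{H_1}(v)=N_G(v_i)$.

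Suppose toward contradiction such an extension $\widetilde{c}$ exists. I then glue $c$ and $\widetilde{c}$ together: color edges of $G-v_i$ according to $c$, and color the edges $[v_i,w]$ for $w\in N_G(v_i)$ according to how $\widetilde{c}$ colors the corresponding edges from $v$ in $H_1$. The two colorings agree on their common domain $E(H)$ (both restrict to $c'$), so this produces a well-defined 2-coloring $c^\ast$ of $E(G)$. To check it is $(3,3)$-free, take any triangle $T$ in $G$: if $v_i\notin T$ then $T$ lies in $G-v_i$ and is not monochromatic by $c$; if $v_i\in T$, then since $v_i$ is not adjacent to any other $v_j$ (independence of the $v_j$'s), the remaining two vertices of $T$ lie in $V(H)$, so $T$ is a triangle in $H_1$ and is not monochromatic by $\widetilde{c}$. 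Hence $c^\ast$ is a $(3,3)$-free 2-coloring of $G$, contradicting $G\to(3,3)$.

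The conceptual obstacle is minor and lies in checking that the glued coloring $c^\ast$ really covers every edge of $G$ consistently and really avoids every monochromatic triangle; this is where the independence of $\{v_1,\dots,v_s\}$ is used in a non-trivial way, because without it a triangle through $v_i$ could involve some $v_j$ on an edge uncolored by $\widetilde{c}$, breaking the argument. With independence, no such triangle exists, and the proof closes.
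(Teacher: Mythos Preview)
Your proof is correct and follows essentially the same route as the paper's: start from a $(3,3)$-free coloring of $G-v_i$ (available by minimality), restrict to $H$, assume this restriction extends over the added vertex, and glue the two colorings into a $(3,3)$-free coloring of $G$ to reach a contradiction. The only difference is presentational: the paper phrases it as ``suppose $N_G(v_i)$ is not marked'' and then uses that \emph{every} coloring of $H$ extends, whereas you directly exhibit the specific witness $c'$; and you spell out the triangle check (using independence of the $v_j$) that the paper leaves implicit in its final ``Thus, we obtain\ldots'' sentence.
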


\begin{proof}
Suppose the opposite is true, i.e. $N_G(v_i)$ is not a marked vertex set in $H$ for some $i \in \set{1, ..., s}$. Since $G$ is a minimal $(3, 3)$-Ramsey graph, there exists a $(3, 3)$-free $2$-coloring of the edges of $G - v_i$, which induces a $(3, 3)$-free $2$-coloring of the edges of $H$. By supposition, we can extend this $2$-coloring to a $(3, 3)$-free $2$-coloring of the edges of the graph $H_i = G - \set{v_1, ..., v_{i - 1}, v_{i + 1}, ..., v_s}$. Thus, we obtain a $(3, 3)$-free $2$-coloring of the edges of $G$, which is a contradiction.
\end{proof}

\begin{definition}
\label{definition: complete family of marked vertex sets}
Let $\set{M_1, ..., M_s}$ be a family of marked vertex sets in the graph $G$. Let $G_i$ be a graph which is obtained by adding a new vertex $v_i$ to $G$ such that $N_{G_i}(v_i) = M_i, i = 1, ..., s$. We say that $\set{M_1, ..., M_s}$ is a complete family of marked vertex sets in $G$, if for each $(3, 3)$-free $2$-coloring of the edges of $G$ there exists $i \in \set{1, ..., s}$ such that this $2$-coloring can not be extended to a $(3, 3)$-free $2$-coloring of the edges of $G_i$.
\end{definition}

\begin{proposition}
\label{proposition: if set(N_G(v_1), ..., N_G(v_s)) is a complete family of marked vertex sets in H, then G rightarrow (3, 3)}
Let $v_1, ..., v_s$ be independent vertices of the graph $G$ and $H = G - \set{v_1, ..., v_s}$. If $\set{N_G(v_1), ..., N_G(v_s)}$ is a complete family of marked vertex sets in $H$, then $G \rightarrow (3, 3)$.
\end{proposition}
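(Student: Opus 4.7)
The plan is to argue by contradiction. Suppose $G \not\rightarrow (3,3)$; then there exists a $(3,3)$-free $2$-coloring $c$ of $\E(G)$. The first step is to observe that $c$ restricted to $\E(H)$ is still a $(3,3)$-free $2$-coloring of $\E(H)$, since subgraphs of a coloring without monochromatic triangles are themselves free of monochromatic triangles.

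The key step is then to show that, for every $i \in \set{1,\ldots,s\,}$, the restricted coloring on $\E(H)$ can be extended to a $(3,3)$-free $2$-coloring of $\E(H_i)$, where $H_i = G - \set{v_1, \ldots, v_{i-1}, v_{i+1}, \ldots, v_s}$. This is immediate from the hypothesis that $v_1, \ldots, v_s$ are independent in $G$: the edges of $H_i$ are precisely the edges of $H$ together with the edges from $v_i$ to its neighbors, which in $G$ lie entirely in $V(H)$ since no $v_j$ with $j \neq i$ is adjacent to $v_i$. In other words, $H_i$ is exactly the graph obtained from $H$ by adjoining a new vertex $v_i$ with neighborhood $N_G(v_i)$. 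The restriction of $c$ to $\E(H_i)$ is a $(3,3)$-free coloring, so it provides the required extension.

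This directly contradicts the assumption that $\set{N_G(v_1), \ldots, N_G(v_s)}$ is a complete family of marked vertex sets in $H$, which demands that for \emph{every} $(3,3)$-free $2$-coloring of $\E(H)$ there exists some index $i$ for which the coloring fails to extend to $\E(H_i)$. Hence no such $c$ can exist, and $G \rightarrow (3,3)$.

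The argument is essentially a direct unwinding of Definitions \ref{definition: marked vertex set} and \ref{definition: complete family of marked vertex sets}, so there is no real obstacle; the only point that needs care is verifying that $H_i$ coincides with the graph ``$H$ plus a vertex of prescribed neighborhood'' appearing in Definition \ref{definition: complete family of marked vertex sets}, which uses the independence of the $v_i$'s.
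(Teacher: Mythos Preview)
Your proof is correct and follows essentially the same approach as the paper's: both restrict a putative $(3,3)$-free coloring of $G$ to $H$ and then to each $H_i$, invoking the definition of a complete family to obtain a contradiction. Your write-up is in fact more careful than the paper's brief version, since you explicitly verify (using the independence of the $v_i$) that $H_i$ coincides with the graph $G_i$ appearing in Definition~\ref{definition: complete family of marked vertex sets}.
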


\begin{proof}
Consider a 2-coloring of the edges of $G$ which induces a 2-coloring with no monochromatic triangles in $H$. According to Definition \ref{definition: complete family of marked vertex sets}, this 2-coloring of the edges of $H$ can not be extended in $G$ without forming a monochromatic triangle.
\end{proof}

It is easy to prove the following strengthening of Proposition \ref{proposition: marked vertex sets in (3, 3)-Ramsey graphs}:
\begin{proposition}
\label{proposition: if G is a minimal (3, 3)-Ramsey graph, then set(N_G(v_1), ..., N_G(v_s)) is a complete family of marked vertex sets}
Let $G$ be a minimal $(3, 3)$-Ramsey graph, let $v_1, ..., v_s$ be independent vertices of $G$ and $H = G - \set{v_1, ..., v_s}$. Then, $\set{N_G(v_1), ..., N_G(v_s)}$ is a complete family of marked vertex sets in $H$. What is more, this family is a minimal complete family, in the sense that it does not contain a proper complete subfamily.
\end{proposition}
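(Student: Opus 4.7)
The plan is to prove the two assertions separately: first that $\set{N_G(v_1), \ldots, N_G(v_s)}$ is a complete family of marked vertex sets in $H$, and then that no proper subfamily is already complete. The fact that each individual $N_G(v_i)$ is a marked vertex set in $H$ is Proposition \ref{proposition: marked vertex sets in (3, 3)-Ramsey graphs}, so I need only address the two new claims.

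For completeness I would argue by contradiction. Suppose there is a $(3,3)$-free $2$-coloring $\chi$ of $\E(H)$ which, for \emph{every} $i$, extends to a $(3,3)$-free $2$-coloring of the graph obtained from $H$ by adding a new vertex $v_i$ with neighborhood $N_G(v_i)$. Because $v_1, \ldots, v_s$ are independent in $G$, the edge sets $\set{[v_i, w] : w \in N_G(v_i)}$ are pairwise disjoint and, together with $\E(H)$, exhaust $\E(G)$. Hence the $s$ extensions glue into a single $2$-coloring of $\E(G)$. Any triangle of $G$ either lies entirely in $H$ (no monochromatic triangle by the choice of $\chi$) or contains exactly one $v_i$ with both other vertices in $N_G(v_i) \subseteq \V(H)$ (no monochromatic triangle by the $i$-th extension); no triangle contains two of the $v_i$'s because they are pairwise non-adjacent. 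The resulting coloring therefore contradicts $G \rightarrow (3,3)$.

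For minimality I would again argue by contradiction. Suppose that for some $i$ the subfamily $\set{N_G(v_j) : j \neq i}$ is already complete in $H$. In the graph $G - v_i$ the vertices $\set{v_j : j \neq i}$ are independent, $(G - v_i) - \set{v_j : j \neq i} = H$, and $N_{G - v_i}(v_j) = N_G(v_j)$ for each $j \neq i$, since $v_i$ and $v_j$ are non-adjacent in $G$. Proposition \ref{proposition: if set(N_G(v_1), ..., N_G(v_s)) is a complete family of marked vertex sets in H, then G rightarrow (3, 3)} then yields $G - v_i \rightarrow (3, 3)$, which contradicts the minimality of $G$.

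I do not anticipate any serious obstacle. The only delicate point is the first step, where the independence of $v_1, \ldots, v_s$ is indispensable: without it the merged coloring could introduce a monochromatic triangle involving two of the newly added vertices, whose incident edges would come from two unrelated extensions with no reason to be compatible.
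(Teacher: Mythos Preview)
Your argument is correct and is exactly the natural one; the paper itself does not supply a proof of this proposition, remarking only that it is ``easy to prove'' as a strengthening of Proposition~\ref{proposition: marked vertex sets in (3, 3)-Ramsey graphs}. The one point worth making explicit in your minimality step is the reduction to subfamilies of size $s-1$: an arbitrary proper complete subfamily need not have this form, but since completeness is clearly preserved under enlarging the family (immediate from Definition~\ref{definition: complete family of marked vertex sets}), any proper complete subfamily is contained in one of the form $\{N_G(v_j):j\neq i\}$, and your contradiction via Proposition~\ref{proposition: if set(N_G(v_1), ..., N_G(v_s)) is a complete family of marked vertex sets in H, then G rightarrow (3, 3)} then applies.
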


Let $G$ be a minimal $(3, 3)$-Ramsey graph and $\alpha(G) \geq \abs{\V(G)} - k \geq 1$. Let $A$ be an independent set in $G$ such that $\abs{A} = \abs{\V(G)} - k$. Then, $\abs{\V(G - A)} = k$, and therefore the graph $G$ is obtained by adding and independent set of vertices to the $k$-vertex graph $G - A$. From Proposition \ref{proposition: minimal (p, q)-Ramsey graph is not Sperner} it is easy to see that for a fixed $k$ there are a finite number of minimal $(3, 3)$-Ramsey graphs $G$ for which $\alpha(G) \geq \abs{\V(G)} - k \geq 1$. We define an algorithm for finding all minimal $(3, 3)$-Ramsey graphs $G$ for which $\alpha(G) \geq \abs{\V(G)} - k \geq 1$, where $k$ is fixed (but $\V(G)$ is not fixed).

\begin{algorithm}
\label{algorithm: finding minimal (3, 3)-Ramsey graphs G for which alpha(G) geq abs(V(G)) - k geq 1}
(A. Bikov and N. Nenov)
Finding all minimal $(3, 3)$-Ramsey graphs $G$ for which $\omega(G) < q$ and $\alpha(G) \geq \abs{\V(G)} - k \geq 1$, where $q$ and $k$ are fixed positive integers.

1. Denote by $\mathcal{A}$ the set of all $k$-vertex graphs $H$ for which $\omega(H) < q$ and $\chi(H) \geq 5$. The obtained minimal $(3, 3)$-Ramsey graphs will be output in the set $\mathcal{B}$, let $\mathcal{B} = \emptyset$.

2. For each graph $H \in \mathcal{A}$:

2.1. Find all subsets $M$ of $\V(H)$ which have the properties:

(a) $K_{q - 1} \not\subseteq H[M]$, i.e. $M$ is a $K_{(q-1)}$-free subset.

(b) $M \not\subseteq N_H(v), \forall v \in \V(H)$.

(c) $M$ is a marked vertex set in $H$(see Definition \ref{definition: marked vertex set}).

Denote by $\mathcal{M}(H)$ the family of subsets of $\V(H)$ which have the properties (a), (b) and (c). Enumerate the elements of $\mathcal{M}(H)$: $\mathcal{M}(H) = \set{M_1, ..., M_t}$.
	
2.2. Find all minimal complete subfamilies of $\mathcal{M}(H)$ (see Definition \ref{definition: complete family of marked vertex sets}). For each such found subfamily $\set{M_{i_1}, ..., M_{i_s}}$ construct the graph $G = G(M_{i_1}, ..., M_{i_s})$ by adding new independent vertices $v_1, v_2, ..., v_s$ to $\V(H)$ such that $N_G(v_j) = M_{i_j}, j = 1, ..., s$. Add $G$ to $\mathcal{B}$. If there are no complete subfamilies of $\mathcal{M}(H)$, then no supergraphs of $H$ are added to $\mathcal{B}$.

3. Remove isomorph copies of graphs from $\mathcal{B}$.

4. Remove from $\mathcal{B}$ all non-minimal $(3, 3)$-Ramsey graphs.
\end{algorithm}

\begin{remark}
\label{remark: q in set(4, 5, 6)}
It is clear, that if $G$ is a minimal $(3, 3)$-Ramsey graph and $\omega(G) \geq 6$, then $G = K_6$. Obviously there are no $(3, 3)$-Ramsey graphs with clique number less than 3. Therefore, we shall use Algorithm \ref{algorithm: finding minimal (3, 3)-Ramsey graphs G for which alpha(G) geq abs(V(G)) - k geq 1} only for $q \in \set{4, 5, 6}$.
\end{remark}

\begin{theorem}
\label{theorem: finding minimal (3, 3)-Ramsey graphs G for which alpha(G) geq abs(V(G)) - k geq 1}
After executing Algorithm \ref{algorithm: finding minimal (3, 3)-Ramsey graphs G for which alpha(G) geq abs(V(G)) - k geq 1}, the set $\mathcal{B}$ coincides with the set of all minimal $(3, 3)$-Ramsey graphs $G$ for which $\omega(G) < q$ and $\alpha(G) \geq \abs{\V(G)} - k \geq 1$.
\end{theorem}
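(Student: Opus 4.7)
The proof is a standard soundness-and-completeness verification of Algorithm~\ref{algorithm: finding minimal (3, 3)-Ramsey graphs G for which alpha(G) geq abs(V(G)) - k geq 1}, establishing both containments between $\mathcal{B}$ and the target family by unpacking the conditions enforced in steps 2.1--2.2 and matching them against the propositions of Section~3.

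For the inclusion $\mathcal{B} \subseteq \{G : G \text{ minimal } (3,3)\text{-Ramsey}, \omega(G) < q, \alpha(G) \geq |V(G)| - k \geq 1\}$, let $G$ be a graph surviving step 4, so that $G$ is already minimal $(3,3)$-Ramsey. By construction in step 2.2, $G$ arises from some $H \in \mathcal{A}$ by adjoining $s \geq 1$ new independent vertices $v_1, \ldots, v_s$ with neighborhoods $N_G(v_j) = M_{i_j}$ forming a minimal complete family of marked vertex sets in $H$. These added vertices form an independent set of size $s = |V(G)| - k$, whence $\alpha(G) \geq |V(G)| - k \geq 1$. Condition (a) of step 2.1 makes each $M_{i_j}$ a $K_{q-1}$-free subset of $V(H)$, so no new $v_j$ lies in a $K_q$; combined with $\omega(H) < q$ coming from $H \in \mathcal{A}$, this gives $\omega(G) < q$. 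That $G \rightarrow (3,3)$ is guaranteed by Proposition~\ref{proposition: if set(N_G(v_1), ..., N_G(v_s)) is a complete family of marked vertex sets in H, then G rightarrow (3, 3)}.

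For the reverse inclusion, fix a minimal $(3,3)$-Ramsey graph $G$ with $\omega(G) < q$ and $\alpha(G) \geq |V(G)| - k \geq 1$. Choose an independent set $A = \{v_1, \ldots, v_s\} \subseteq V(G)$ of cardinality exactly $s = |V(G)| - k$, and set $H = G - A$, so $|V(H)| = k$. Then $\omega(H) \leq \omega(G) < q$, and applying Corollary~\ref{corollary: chi(H) geq 5} to the independent set $A$ yields $\chi(H) \geq 5$; hence $H \in \mathcal{A}$. Setting $M_i = N_G(v_i)$, I would verify conditions (a)--(c) of step 2.1 as follows: (a) a $K_{q-1}$ inside $H[M_i]$ would combine with $v_i$ to yield a $K_q \subseteq G$, contradicting $\omega(G) < q$; (b) $M_i \subseteq N_H(v)$ for some $v \in V(H)$ would give $N_G(v_i) \subseteq N_G(v)$, making $G$ a Sperner graph and contradicting Proposition~\ref{proposition: minimal (p, q)-Ramsey graph is not Sperner}; (c) $M_i$ is a marked vertex set in $H$ by Proposition~\ref{proposition: marked vertex sets in (3, 3)-Ramsey graphs}. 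The $M_i$ are pairwise distinct (any coincidence would again force a Sperner pair), and $\{M_1, \ldots, M_s\}$ is a minimal complete subfamily of $\mathcal{M}(H)$ by Proposition~\ref{proposition: if G is a minimal (3, 3)-Ramsey graph, then set(N_G(v_1), ..., N_G(v_s)) is a complete family of marked vertex sets}. Therefore step 2.2 run on $H$ with this subfamily constructs a graph isomorphic to $G$, which then survives step 3 (isomorph removal) and step 4 (since $G$ is minimal $(3,3)$-Ramsey by assumption).

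The main delicacy I anticipate is the bookkeeping around condition (b) of step 2.1 together with the pairwise distinctness of the $M_i$: both are needed so that the family of neighborhoods arising from $G$ is literally enumerated by the algorithm (rather than being collapsed or excluded from $\mathcal{M}(H)$), and both ultimately reduce to ruling out Sperner pairs in $G$. The remainder is a routine unpacking of Definitions~\ref{definition: marked vertex set} and~\ref{definition: complete family of marked vertex sets} against the propositions cited above.
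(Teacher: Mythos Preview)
Your proof is correct and follows essentially the same approach as the paper's own proof: soundness via Proposition~\ref{proposition: if set(N_G(v_1), ..., N_G(v_s)) is a complete family of marked vertex sets in H, then G rightarrow (3, 3)} together with condition~(a) and $\omega(H)<q$, and completeness via Corollary~\ref{corollary: chi(H) geq 5}, Proposition~\ref{proposition: minimal (p, q)-Ramsey graph is not Sperner}, Proposition~\ref{proposition: marked vertex sets in (3, 3)-Ramsey graphs}, and Proposition~\ref{proposition: if G is a minimal (3, 3)-Ramsey graph, then set(N_G(v_1), ..., N_G(v_s)) is a complete family of marked vertex sets}. Your explicit remark that the $M_i$ are pairwise distinct (to ensure the family is genuinely enumerated in $\mathcal{M}(H)$) is a detail the paper leaves implicit.
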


\begin{proof}
From step 2.2 it becomes clear that every graph $G$ which is added to $\mathcal{B}$ is obtained by adding independent vertices $v_1, ..., v_s$ to a graph $H \in \mathcal{A}$. Therefore, $\alpha(G) \geq s = \abs{\V(G)} - \abs{\V(H)} = \abs{\V(G)} - k$. From $\omega(H) < q$ and $K_{q - 1} \not\subseteq H[N_G(v_i)], i = 1, ..., s$, it follows $\omega(G) < q$. According to Proposition \ref{proposition: if set(N_G(v_1), ..., N_G(v_s)) is a complete family of marked vertex sets in H, then G rightarrow (3, 3)}, after step 2.2 $\mathcal{B}$ contains only $(3, 3)$-Ramsey graphs, and after step 4 $\mathcal{B}$ contains only minimal $(3, 3)$-Ramsey graphs.

In order to prove that $\mathcal{B}$ contains all minimal $(3, 3)$-Ramsey graphs which fulfill the conditions, consider an arbitrary minimal $(3, 3)$-Ramsey graph $G$ for which $\omega(G) < q$ and $\alpha(G) \geq \abs{\V(G)} - k \geq 1$. We will prove that $G \in \mathcal{B}$.
	
Denote $s = \abs{\V(G)} - k \geq 1$. Let $v_1, ..., v_s$ be independent vertices of $G$ and $H = G - \set{v_1, ..., v_s}$. By \ref{corollary: chi(H) geq 5}, $\chi(H) \geq 5$. Therefore, after executing step 1, $H \in \mathcal{A}$.

From $\omega(G) < q$ it follows $\omega(G(v_i)) < q - 1$. By Proposition \ref{proposition: minimal (p, q)-Ramsey graph is not Sperner}, $G$ is not a Sperner graph, and therefore $N_G(v_i) \not\subseteq N_H(v), \forall v \in \V(H)$. According to Proposition \ref{proposition: marked vertex sets in (3, 3)-Ramsey graphs}, $N_G(v_i)$ are marked vertex sets in $H$. Therefore, after executing step 2.1, $N_G(v_i) \in \mathcal{M}(H), i = 1, ..., s$.

From Proposition \ref{proposition: if G is a minimal (3, 3)-Ramsey graph, then set(N_G(v_1), ..., N_G(v_s)) is a complete family of marked vertex sets} it becomes clear that $\set{N_G(v_1), ..., N_G(v_s)}$ is a minimal complete subfamily of $\mathcal{M}(H)$. Therefore, in step 2.2 the graph $G$ is added to $\mathcal{B}$.

Thus, the theorem is proved.
\end{proof}

In order to find the 13-vertex minimal $(3, 3)$-Ramsey graphs we will use the following modification of Algorithm \ref{algorithm: finding minimal (3, 3)-Ramsey graphs G for which alpha(G) geq abs(V(G)) - k geq 1} in which $n = \abs{\V(G)}$ is fixed:

\begin{algorithm}
\label{algorithm: finding n-vertex minimal (3, 3)-Ramsey graphs G for which alpha(G) geq n - k geq 1}
Modification of Algorithm \ref{algorithm: finding minimal (3, 3)-Ramsey graphs G for which alpha(G) geq abs(V(G)) - k geq 1} for finding all $n$-vertex minimal $(3, 3)$-Ramsey graphs $G$ for which $\omega(G) < q$ and $\alpha(G) \geq n - k \geq 1$, where $q$, $k$ and $n$ are fixed positive integers.

In step 2.2 of Algorithm \ref{algorithm: finding minimal (3, 3)-Ramsey graphs G for which alpha(G) geq abs(V(G)) - k geq 1} add the condition to consider only minimal complete subfamilies $\set{M_{i_1}, ..., M_{i_s}}$ of $\mathcal{M}(H)$ in which $s = n - k$.
\end{algorithm}

\section{Minimal $(3, 3)$-Ramsey graphs with up to 12 vertices}

We execute Algorithm \ref{algorithm: finding all minimal (3, 3)-Ramsey graphs on n vertices} for $n = 7, 8, 9, 10, 11, 12$, and we find all minimal $(3, 3)$-Ramsey graphs with up to 12 vertices except $K_6$. In this way, we obtain the known results: there is no minimal $(3, 3)$-Ramsey graph with 7 vertices, the Graham graph $K_3+C_5$ is the only such 8-vertex graph, and there exists only one such 9-vertex graph, the Nenov graph from \cite{Nen79} (see Figure \ref{figure: Nenov_9}). We also obtain the following new results:

\begin{theorem}
\label{theorem: 10-vertex minimal (3, 3)-Ramsey graphs}
There are exactly 6 minimal 10-vertex $(3, 3)$-Ramsey graphs. These graphs are given on Figure \ref{figure: 10}, and some of their properties are listed in Table \ref{table: 10-vertex graphs properties}.
\end{theorem}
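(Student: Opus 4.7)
The plan is to apply Algorithm~\ref{algorithm: finding all minimal (3, 3)-Ramsey graphs on n vertices} with $n=10$. By Theorem~\ref{theorem: finding all minimal (3, 3)-Ramsey graphs on n vertices}, once the algorithm terminates the set $\mathcal{B}$ coincides with the collection of all $10$-vertex minimal $(3,3)$-Ramsey graphs, so the whole content of Theorem~\ref{theorem: 10-vertex minimal (3, 3)-Ramsey graphs} is the claim that running the algorithm yields exactly six graphs, namely those drawn in Figure~\ref{figure: 10}. The proof is therefore essentially computational, and my job is to justify that the enumeration is complete and that the resulting count is correct.

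First I would carry out step~1 by calling a standard non-isomorphic graph generator (for instance \texttt{geng} from the \emph{nauty} package) restricted to $10$-vertex graphs with $\delta(G)\ge 4$; this produces $\mathcal{B}$ of moderate size. Next I would implement steps~2--4 as simple filters: discarding Sperner graphs amounts to checking pairwise neighborhood inclusion; $\omega(G)=5$ and $\chi(G)\ge 6$ can be tested with straightforward branch-and-bound routines (the chromatic number test is cheap because a $5$-coloring either exists or it does not, and our graphs are small). After step~4 the surviving list is already very short, and the nontrivial tests in steps~5 and~6 can be performed on each remaining graph by enumerating $(3,3)$-free $2$-colorings of the edges, exactly as in the subroutine underlying Definition~\ref{definition: marked vertex set}.

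To confirm step~6 I would, for each candidate $G$, check that removing any single edge produces a graph that admits a $(3,3)$-free $2$-coloring; if so, $G$ is edge-minimal, and by minimality of the Ramsey property together with $\delta(G)\ge 4$ it follows that no vertex deletion preserves the Ramsey property either, so $G$ is minimal in the sense of Definition~\ref{definition: minimal (p, q)-Ramsey graph}. As a sanity check I would verify that the three previously known $10$-vertex minimal $(3,3)$-Ramsey graphs (the graph $K_3+C_7$ from the infinite family $K_3+C_{2r+1}$, and the two Nenov graphs from~\cite{Nen80c}, one of which is drawn in Figure~\ref{figure: Nenov_10} and one of which is a subgraph of $K_1+\overline{C_9}$) appear in $\mathcal{B}$; the remaining three graphs in the output are then new, and after drawing their diagrams and computing their invariants one fills in Figure~\ref{figure: 10} and Table~\ref{table: 10-vertex graphs properties}.

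The main obstacle is the cost of step~1: the number of $10$-vertex graphs with $\delta\ge 4$ is large enough that one cannot realistically test each for the Ramsey property, so the reduction steps~2--4 must be applied inline during the generation or immediately afterwards to shrink $\mathcal{B}$ to a manageable size before invoking the expensive Ramsey check in step~5. A further subtle point is ensuring that isomorphic copies are eliminated correctly before and after each filter, which I would handle by keeping every graph in a canonical form throughout the computation. Once these issues are addressed the enumeration runs to completion and produces exactly the six graphs claimed.
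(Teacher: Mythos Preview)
Your proposal is correct and follows essentially the same approach as the paper: the paper's proof of Theorem~\ref{theorem: 10-vertex minimal (3, 3)-Ramsey graphs} consists precisely of executing Algorithm~\ref{algorithm: finding all minimal (3, 3)-Ramsey graphs on n vertices} with $n=10$ and invoking Theorem~\ref{theorem: finding all minimal (3, 3)-Ramsey graphs on n vertices} for correctness, with the intermediate counts recorded in Table~\ref{table: steps in finding all minimal (3, 3)-Ramsey graphs with up to 12 vertices}. Your added implementation remarks (using \emph{nauty}'s generator, filtering inline, verifying edge-minimality, and cross-checking against the three previously known examples) are reasonable elaborations but do not depart from the paper's method.
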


\begin{theorem}
\label{theorem: 11-vertex minimal (3, 3)-Ramsey graphs}
There are exactly 73 minimal 11-vertex $(3, 3)$-Ramsey graphs. Some of their properties are listed in Table \ref{table: 11-vertex graphs properties}. Examples of 11-vertex minimal $(3, 3)$-Ramsey graphs are given on Figure \ref{figure: 11_a4} and Figure \ref{figure: 11_a2}.
\end{theorem}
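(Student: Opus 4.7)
The plan is to prove the theorem by a direct application of Algorithm~\ref{algorithm: finding all minimal (3, 3)-Ramsey graphs on n vertices} with $n = 11$, whose correctness is already guaranteed by Theorem~\ref{theorem: finding all minimal (3, 3)-Ramsey graphs on n vertices}. Once the algorithm terminates with the set $\mathcal{B}$, we will have enumerated, up to isomorphism, every 11-vertex minimal $(3,3)$-Ramsey graph, and it will only remain to count $|\mathcal{B}|$ and tabulate invariants of its members.

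Concretely, I would first run step~1 of the algorithm, generating all non-isomorphic 11-vertex graphs with $\delta(G) \geq 4$ (for instance with \texttt{nauty}/\texttt{geng} restricted by the minimum-degree flag). The size of this pool is the principal concern, but the constraints $\delta \geq 4$ and $\omega = 5$ (from Theorem~\ref{theorem: F_e(3, 3; 5) = 15}) together with the Sperner and chromatic-number filters in steps~2--4 prune it drastically before the expensive step~5. For step~5 one checks $G \to (3,3)$ by exhaustive 2-colouring of $\E(G)$ with backtracking and triangle-based pruning; for step~6 one tests minimality by verifying that for every edge $e \in \E(G)$ the subgraph $G - e$ admits a $(3,3)$-free 2-colouring, again by backtracking. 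Since we only reach this stage on a small set of candidates, the running time is acceptable.

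After the set $\mathcal{B}$ of 11-vertex minimal $(3,3)$-Ramsey graphs is produced, I would tally $|\mathcal{B}|$ and confirm that it equals $73$. For the invariants listed in Table~\ref{table: 11-vertex graphs properties} (such as $|\E(G)|$, $\delta(G)$, $\Delta(G)$, $\alpha(G)$, $\chi(G)$, the number of 5-cliques, etc.), the values are computed graph by graph using standard routines; the examples depicted in Figures~\ref{figure: 11_a4} and~\ref{figure: 11_a2} are selected from $\mathcal{B}$ after inspection.

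The main obstacle is entirely computational rather than mathematical: the raw number of 11-vertex graphs with $\delta \geq 4$ is large, and step~5 (the Ramsey test) is the bottleneck, since on borderline inputs the search tree for a $(3,3)$-free 2-colouring can be sizeable. The key to making the computation feasible is to apply the cheap filters (Sperner, $\omega = 5$, $\chi \geq 6$) in the order given by Algorithm~\ref{algorithm: finding all minimal (3, 3)-Ramsey graphs on n vertices}, so that only a manageable residue reaches the arrow-relation test, and to exploit symmetries when running the backtracking colouring procedure. Once this is done, the correctness of the enumeration, and hence the count $73$, follows immediately from Theorem~\ref{theorem: finding all minimal (3, 3)-Ramsey graphs on n vertices}.
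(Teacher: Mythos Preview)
Your proposal is correct and follows essentially the same approach as the paper: the paper proves Theorem~\ref{theorem: 11-vertex minimal (3, 3)-Ramsey graphs} precisely by executing Algorithm~\ref{algorithm: finding all minimal (3, 3)-Ramsey graphs on n vertices} with $n=11$, relying on Theorem~\ref{theorem: finding all minimal (3, 3)-Ramsey graphs on n vertices} for correctness, and records the intermediate counts at each filtering step in Table~\ref{table: steps in finding all minimal (3, 3)-Ramsey graphs with up to 12 vertices}. (As a minor aside, the invariants actually tabulated in Table~\ref{table: 11-vertex graphs properties} are $|\E(G)|$, $\delta(G)$, $\Delta(G)$, $\alpha(G)$, $\chi(G)$ and $|Aut(G)|$, not the number of 5-cliques.)
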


\begin{theorem}
\label{theorem: 12-vertex minimal (3, 3)-Ramsey graphs}
There are exactly 3041 minimal 12-vertex $(3, 3)$-Ramsey graphs. Some of their properties are listed in Table \ref{table: 12-vertex graphs properties}. Examples of 12-vertex minimal $(3, 3)$-Ramsey graphs are given on Figure \ref{figure: 12_a5} and Figure \ref{figure: 12_aut}.
\end{theorem}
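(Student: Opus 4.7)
The plan is to obtain the theorem as a direct computational corollary of Theorem \ref{theorem: finding all minimal (3, 3)-Ramsey graphs on n vertices} applied with $n = 12$. Concretely, I would execute Algorithm \ref{algorithm: finding all minimal (3, 3)-Ramsey graphs on n vertices} on 12-vertex graphs, and the claim that $|\mathcal{B}| = 3041$ after termination follows from the already-established correctness of this algorithm for $n \in \{7, \ldots, 14\}$. All that remains is (i) to actually carry out the enumeration and (ii) to read off the properties summarized in Table \ref{table: 12-vertex graphs properties} and select the examples drawn in Figures \ref{figure: 12_a5} and \ref{figure: 12_aut} from the resulting list.

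For the implementation I would order the filters by increasing cost. Step 1 should use an isomorphism-free generator such as \texttt{geng} from \texttt{nauty}, restricted to 12-vertex graphs with $\delta(G) \geq 4$; this produces the raw candidate set $\mathcal{B}$. Steps 2, 3, 4 (discarding Sperner graphs, graphs with $\omega(G) \neq 5$, and graphs with $\chi(G) < 6$) are comparatively cheap and cut $\mathcal{B}$ down drastically before the Ramsey arrow test. For step 5, deciding $G \to (3,3)$ is carried out by a backtracking edge 2-coloring routine that maintains the invariant ``no monochromatic triangle yet'' and propagates forced colors whenever two sides of a triangle are already colored alike; equivalently one may encode the question as a SAT instance. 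Step 6 checks minimality by verifying $G - e \not\to (3,3)$ for every $e \in \E(G)$, using the same backtracking routine but on a smaller graph, and can be accelerated by noticing that witnessing colorings for $G - e$ often extend to witnessing colorings for $G - e'$ for nearby edges.

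The principal obstacle is the sheer size of the candidate pool at step 1: the number of 12-vertex graphs with $\delta \geq 4$ is already large enough that naive filtering afterwards would be too slow. To keep the computation feasible I would interleave generation with the early filters, so that Sperner, clique-number, and chromatic-number tests are applied on the fly inside the generator (clique-number and chromatic-number being computable efficiently at $n=12$). The only genuinely expensive test is the Ramsey arrow, which I expect to dominate the runtime; the right way to cope is to exploit the $K_5$ already guaranteed by step 3 (every uncolored edge inside the forced $K_5$ must avoid both monochromatic triangles, so its color is heavily constrained), and to use canonical orderings of vertices to prune symmetric branches.

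Once $\mathcal{B}$ has been computed and shown to have cardinality $3041$, the properties listed in Table \ref{table: 12-vertex graphs properties} (independence number, maximum degree, number of 5-cliques, automorphism group order, etc.) are obtained by a single sweep through the graphs in $\mathcal{B}$ using standard routines, and representative graphs for Figures \ref{figure: 12_a5} and \ref{figure: 12_aut} are picked as graphs in $\mathcal{B}$ realizing the extremal values of $\alpha$ and $|\mathrm{Aut}|$ recorded in that table. As a sanity check, every graph in $\mathcal{B}$ must satisfy the bounds of Theorem \ref{theorem: delta(G) geq (p-1)^2} and Theorem \ref{theorem: alpha(G(v)) leq d(v) - 3}, and the three previously known 10-vertex graphs together with the graphs from Theorems \ref{theorem: 10-vertex minimal (3, 3)-Ramsey graphs} and \ref{theorem: 11-vertex minimal (3, 3)-Ramsey graphs} should not be reproduced here, giving an independent consistency check on the enumeration.
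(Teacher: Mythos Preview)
Your proposal is correct and follows essentially the same approach as the paper: the paper proves Theorem~\ref{theorem: 12-vertex minimal (3, 3)-Ramsey graphs} precisely by executing Algorithm~\ref{algorithm: finding all minimal (3, 3)-Ramsey graphs on n vertices} with $n=12$, relying on Theorem~\ref{theorem: finding all minimal (3, 3)-Ramsey graphs on n vertices} for correctness, and records the intermediate cardinalities of~$\mathcal{B}$ after each step in Table~\ref{table: steps in finding all minimal (3, 3)-Ramsey graphs with up to 12 vertices} (starting from $48\,211\,096\,031$ graphs at step~1 down to $3041$ at step~6). The only addition in the paper is an independent verification: at the end of Section~5 the same list is recomputed via Algorithm~\ref{algorithm: finding n-vertex minimal (3, 3)-Ramsey graphs G for which alpha(G) geq n - k geq 1} with $(n,k,q)=(12,9,6)$ together with the $\alpha(G)=2$ census of Theorem~\ref{theorem: minimal (3, 3)-Ramsey graphs G for which alpha(G) = 2}, which you may wish to mention as a cross-check.
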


We will use the following enumeration for the obtained minimal $(3, 3)$-Ramsey graphs:\\
- $G_{10.1}$, ..., $G_{10.6}$ are the 10-vertex graphs;\\
- $G_{11.1}$, ..., $G_{11.73}$ are the 11-vertex graphs;\\
- $G_{12.1}$, ..., $G_{12.3041}$ are the 12-vertex graphs;

The indexes correspond to the order of the graphs' canonical labels defined in \emph{nauty} \cite{MP13}.

Detailed data for the number of graphs obtained at each step of the execution of Algorithm \ref{algorithm: finding all minimal (3, 3)-Ramsey graphs on n vertices} is given in Table \ref{table: steps in finding all minimal (3, 3)-Ramsey graphs with up to 12 vertices}.

\begin{table}[h]
	\centering
	\resizebox{\textwidth}{!}{
		\begin{tabular}{ | l | r | r | r | r | r | }
			\hline
			Step of		& $n = 8$		& $n = 9$		& $n = 10$		& $n = 11$		& $n = 12$		\\
			Algorithm \ref{algorithm: finding all minimal (3, 3)-Ramsey graphs on n vertices}&&&&&\\
			\hline
			1				&  424			& 15 471		& 1 249 973		& 187 095 840	& 48 211 096 031\\
			2				&  59			& 2 365			& 206 288		& 33 128 053	& 9 148 907 379	\\
			3				&  9			& 380			& 41 296		& 8 093 890		& 2 763 460 021	\\
			4				&  1			& 7				& 356			& 78 738		& 44 904 195	\\
			5				&  1			& 3				& 126			& 23 429		& 11 670 079	\\
			6				&  1			& 1				& 6				& 73			& 3041			\\
			\hline
		\end{tabular}
	}
	\caption{Steps in finding all minimal $(3, 3)$-Ramsey graphs with up to 12 vertices}
	\label{table: steps in finding all minimal (3, 3)-Ramsey graphs with up to 12 vertices}
\end{table}

\begin{table}
	\small
	\centering
	\resizebox{\textwidth}{!}{
		\begin{tabular}{ | p{2.0cm} | p{2.0cm} | p{2.0cm} | p{2.0cm} | p{2.0cm} | p{2.0cm} | p{2.0cm} | }
			\hline
			$|\E(G)|$	\hfill $\#$		& $\delta(G)$ \hfill $\#$	& $\Delta(G)$ \hfill $\#$	& $\alpha(G)$ \hfill $\#$	& $\chi(G)$ \hfill $\#$		& $|Aut(G)|$ \hfill $\#$\\
			\hline
			30	\hfill 1				& 4	\hfill 1				& 9	\hfill 6				& 2	\hfill 3				& 6	\hfill 6				& 4		\hfill 2		\\
			31	\hfill 1				& 5	\hfill 4				& 	\hfill 					& 3	\hfill 3				& 	\hfill 					& 8		\hfill 2		\\
			32	\hfill 2				& 6	\hfill 1				& 	\hfill 					& 	\hfill 					& 	\hfill 					& 16	\hfill 1		\\
			33	\hfill 1				& 	\hfill 					& 	\hfill 					& 	\hfill 					& 	\hfill 					& 84	\hfill 1		\\
			34	\hfill 1				& 	\hfill 					& 	\hfill 					& 	\hfill 					& 	\hfill 					& 		\hfill			\\
			\hline
		\end{tabular}
	}
	\caption{Some properties of the 10-vertex minimal $(3, 3)$-Ramsey graphs}
	\label{table: 10-vertex graphs properties}
	\vspace{1em}
	\resizebox{\textwidth}{!}{
		\begin{tabular}{ | p{2.0cm} | p{2.0cm} | p{2.0cm} | p{2.0cm} | p{2.0cm} | p{2.0cm} | p{2.0cm} | }
			\hline
			$|\E(G)|$	\hfill $\#$		& $\delta(G)$	\hfill $\#$	& $\Delta(G)$	\hfill $\#$	& $\alpha(G)$	\hfill $\#$	& $\chi(G)$	\hfill $\#$		& $|Aut(G)|$ \hfill $\#$\\
			\hline
			35	\hfill 6				& 4	\hfill 5				& 8	\hfill 1				& 2	\hfill 4				& 6	\hfill 73				& 1		\hfill 20		\\
			36	\hfill 13				& 5	\hfill 58				& 10\hfill 72				& 3	\hfill 66				& 	\hfill 					& 2		\hfill 29		\\
			37	\hfill 23				& 6	\hfill 10				& 	\hfill 					& 4	\hfill 3				& 	\hfill 					& 4		\hfill 14		\\
			38	\hfill 25				& 	\hfill 					& 	\hfill 					& 	\hfill 					& 	\hfill 					& 6		\hfill 1		\\
			39	\hfill 5				& 	\hfill 					& 	\hfill 					& 	\hfill 					& 	\hfill 					& 8		\hfill 4		\\
			41	\hfill 1				& 	\hfill 					& 	\hfill 					& 	\hfill 					& 	\hfill 					& 12	\hfill 1		\\
			\hfill 					& 	\hfill 					& 	\hfill 					& 	\hfill 					& 	\hfill 					& 16	\hfill 3		\\
			\hfill 					& 	\hfill 					& 	\hfill 					& 	\hfill 					& 	\hfill 					& 24	\hfill 1		\\
			\hline
		\end{tabular}
	}
	\caption{Some properties of the 11-vertex minimal $(3, 3)$-Ramsey graphs}
	\label{table: 11-vertex graphs properties}
	\vspace{1em}
	\resizebox{\textwidth}{!}{
		\begin{tabular}{ | p{2.0cm} | p{2.0cm} | p{2.0cm} | p{2.0cm} | p{2.0cm} | p{2.0cm} | p{2.0cm} | }
			\hline
			$|\E(G)|$	\hfill $\#$		& $\delta(G)$	\hfill $\#$	& $\Delta(G)$	\hfill $\#$	& $\alpha(G)$	\hfill $\#$	& $\chi(G)$	\hfill $\#$		& $|Aut(G)|$ \hfill $\#$\\
			\hline
			38	\hfill 5				& 4	\hfill 129				& 8	\hfill 43				& 2	\hfill 124				& 6	\hfill 3 041			& 1		\hfill 1 792	\\
			39	\hfill 27				& 5	\hfill 2 178			& 9	\hfill 1 196			& 3	\hfill 2 431			& 	\hfill 					& 2		\hfill 851		\\
			40	\hfill 144				& 6	\hfill 611				& 11\hfill 1 802			& 4	\hfill 485				& 	\hfill 					& 4		\hfill 286		\\
			41	\hfill 418				& 7	\hfill 123				& 	\hfill 					& 5	\hfill 1				& 	\hfill 					& 6		\hfill 1		\\
			42	\hfill 1 014			& 	\hfill 					& 	\hfill 					& 	\hfill 					& 	\hfill 					& 8		\hfill 67		\\
			43	\hfill 459				& 	\hfill 					& 	\hfill 					& 	\hfill 					& 	\hfill 					& 12	\hfill 16		\\
			44	\hfill 224				& 	\hfill 					& 	\hfill 					& 	\hfill 					& 	\hfill 					& 16	\hfill 18		\\
			45	\hfill 351				& 	\hfill 					& 	\hfill 					& 	\hfill 					& 	\hfill 					& 24	\hfill 6		\\
			46	\hfill 299				& 	\hfill 					& 	\hfill 					& 	\hfill 					& 	\hfill 					& 32	\hfill 1		\\
			47	\hfill 84				& 	\hfill 					& 	\hfill 					& 	\hfill 					& 	\hfill 					& 36	\hfill 1		\\
			48	\hfill 16				& 	\hfill 					& 	\hfill 					& 	\hfill 					& 	\hfill 					& 96	\hfill 1		\\
			\hfill 					& 	\hfill 					& 	\hfill 					& 	\hfill 					& 	\hfill 					& 108	\hfill 1		\\
			\hline
		\end{tabular}
	}
	\caption{Some properties of the 12-vertex minimal $(3, 3)$-Ramsey graphs}
	\label{table: 12-vertex graphs properties}
	\vspace{1em}
	\resizebox{\textwidth}{!}{
		\begin{tabular}{ | p{2.0cm} | p{2.0cm} | p{2.0cm} | p{2.0cm} | p{2.0cm} | p{2.0cm} | p{2.0cm} | }
			\hline
			$|\E(G)|$	\hfill $\#$		& $\delta(G)$	\hfill $\#$	& $\Delta(G)$	\hfill $\#$	& $\alpha(G)$	\hfill $\#$	& $\chi(G)$	\hfill $\#$		& $|Aut(G)|$ \hfill $\#$\\
			\hline
			41	\hfill 4				& 4	\hfill 13 725			& 8	\hfill 16				& 2	\hfill 13				& 6	\hfill 306 622			& 1		\hfill 251 976	\\
			42	\hfill 44				& 5	\hfill 191 504			& 9	\hfill 61 678			& 3	\hfill 218 802			& 7	\hfill 13				& 2		\hfill 46 487	\\
			43	\hfill 220				& 6	\hfill 85 932			& 10\hfill 175 108			& 4	\hfill 86 721			& 	\hfill 					& 3		\hfill 10		\\
			44	\hfill 1 475			& 7	\hfill 15 391			& 12\hfill 69 833			& 5	\hfill 1 097			& 	\hfill 					& 4		\hfill 6 851	\\
			45	\hfill 7 838			& 8	\hfill 83				& 	\hfill 					& 6	\hfill 2				& 	\hfill 					& 6		\hfill 83		\\
			46	\hfill 28 805			& 	\hfill 					& 	\hfill 					& 	\hfill 					& 	\hfill 					& 8		\hfill 916		\\
			47	\hfill 33 810			& 	\hfill 					& 	\hfill 					& 	\hfill 					& 	\hfill 					& 12	\hfill 129		\\
			48	\hfill 26 262			& 	\hfill 					& 	\hfill 					& 	\hfill 					& 	\hfill 					& 16	\hfill 106		\\
			49	\hfill 39 718			& 	\hfill 					& 	\hfill 					& 	\hfill 					& 	\hfill 					& 24	\hfill 44		\\
			50	\hfill 62 390			& 	\hfill 					& 	\hfill 					& 	\hfill 					& 	\hfill 					& 32	\hfill 12		\\
			51	\hfill 59 291			& 	\hfill 					& 	\hfill 					& 	\hfill 					& 	\hfill 					& 36	\hfill 3		\\
			52	\hfill 34 132			& 	\hfill 					& 	\hfill 					& 	\hfill 					& 	\hfill 					& 40	\hfill 1		\\
			53	\hfill 10 878			& 	\hfill 					& 	\hfill 					& 	\hfill 					& 	\hfill 					& 48	\hfill 11		\\
			54	\hfill 1 680			& 	\hfill 					& 	\hfill 					& 	\hfill 					& 	\hfill 					& 72	\hfill 3		\\
			55	\hfill 86				& 	\hfill 					& 	\hfill 					& 	\hfill 					& 	\hfill 					& 96	\hfill 2		\\
			56	\hfill 2				& 	\hfill 					& 	\hfill 					&	\hfill 					& 	\hfill 					& 144	\hfill 1		\\
			\hline
		\end{tabular}
	}
	\caption{Some properties of the 13-vertex minimal $(3, 3)$-Ramsey graphs}
	\label{table: 13-vertex graphs properties}
\end{table}

\section{Minimal $(3, 3)$-Ramsey graphs with 13 vertices}

The method with which we find all 13-vertex minimal $(3, 3)$-Ramsey graphs consists of two parts:

1. First, we find the 13-vertex minimal $(3, 3)$-Ramsey graphs with independence number 2. We use $R(3, 6) = 18$ \cite{Rad14}, and that all graphs $G$ for which $\alpha(G) < 3$ and $\omega(G) < 6$ are known \cite{McK_r}. Among them, the 13-vertex graphs are 275 086. By computer check, we find that exactly 13 of these graphs are minimal $(3, 3)$-Ramsey graphs.

2. It remains to find the 13-vertex minimal $(3, 3)$-Ramsey graphs with independence number at least 3. To do this, we execute Algorithm \ref{algorithm: finding n-vertex minimal (3, 3)-Ramsey graphs G for which alpha(G) geq n - k geq 1}($n = 13; k = 10; q = 6$). First, in step 1 of Algorithm \ref{algorithm: finding n-vertex minimal (3, 3)-Ramsey graphs G for which alpha(G) geq n - k geq 1} we find all 1 923 103 graphs $H$ with 10 vertices for which $\omega(H) \leq 5$ and $\chi(H) \geq 5$. After that, in step 2 of Algorithm \ref{algorithm: finding n-vertex minimal (3, 3)-Ramsey graphs G for which alpha(G) geq n - k geq 1} we add 3 independent vertices to the obtained 10-vertex graphs, and thus, we obtain all 306 622 minimal $(3, 3)$-Ramsey graphs with 13-vertices and independence number at least 3.

Finally, we obtain
\begin{theorem}
\label{theorem: 13-vertex minimal (3, 3)-Ramsey graphs}
There are exactly 306 635 minimal 13-vertex $(3, 3)$-Ramsey graphs. Some of their properties are listed in \ref{table: 13-vertex graphs properties}. Examples of 13-vertex minimal $(3, 3)$-Ramsey graphs are given on Figure \ref{figure: 13_regular}, Figure \ref{figure: 13_aut} and Figure \ref{figure: 13_a2}.
\end{theorem}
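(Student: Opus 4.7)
The plan is to split the enumeration according to the independence number, so that each piece fits into one of the frameworks already developed. Write $G$ for a hypothetical $13$-vertex minimal $(3,3)$-Ramsey graph. By Theorem~\ref{theorem: delta(G) geq (p-1)^2} and Proposition~\ref{proposition: minimal (p, q)-Ramsey graph is not Sperner}, $\delta(G) \geq 4$ and $G$ is not Sperner, and since $G \neq K_6$ Theorem~\ref{theorem: F_e(3, 3; 5) = 15} forces $\omega(G) = 5$. These restrictions are what make both parts of the enumeration tractable.

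First I would handle the case $\alpha(G) = 2$. Because $R(3,6) = 18$, every graph with $\alpha < 3$ and $\omega < 6$ on at most $17$ vertices appears in McKay's catalogue \cite{McK_r}; in particular the $13$-vertex examples form an explicit list of $275\,086$ graphs. For each of them I would run the $(3,3)$-arrowing test used in Step~5 of Algorithm~\ref{algorithm: finding all minimal (3, 3)-Ramsey graphs on n vertices} and then the minimality test of Step~6, and record the survivors. The text asserts that exactly $13$ graphs pass this filter; this is a finite computation whose only nontrivial ingredient is the efficient arrowing check.

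Next I would handle the case $\alpha(G) \geq 3$, which means $\alpha(G) \geq |\V(G)| - 10$. This is exactly the setting of Algorithm~\ref{algorithm: finding n-vertex minimal (3, 3)-Ramsey graphs G for which alpha(G) geq n - k geq 1} with the parameters $n = 13$, $k = 10$, $q = 6$, whose correctness is guaranteed by Theorem~\ref{theorem: finding minimal (3, 3)-Ramsey graphs G for which alpha(G) geq abs(V(G)) - k geq 1} (together with the modification of Algorithm~\ref{algorithm: finding n-vertex minimal (3, 3)-Ramsey graphs G for which alpha(G) geq n - k geq 1}). In Step~1 I would generate the $10$-vertex base graphs $H$ with $\omega(H) \leq 5$ and $\chi(H) \geq 5$, a set whose size (reportedly $1\,923\,103$) is well within reach of \emph{nauty} combined with a chromatic-number filter; by Corollary~\ref{corollary: chi(H) geq 5} this set contains $G - \{v_1, v_2, v_3\}$ for any independent triple $\{v_1, v_2, v_3\}$ in $G$. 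Then for each such $H$ I would compute $\mathcal{M}(H)$ and enumerate its minimal complete subfamilies of size exactly $3$, attaching three new independent vertices accordingly. After removing isomorphic copies and checking minimality, I expect to obtain the $306\,622$ graphs claimed.

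The two counts add to $13 + 306\,622 = 306\,635$, giving the asserted total. The main obstacle is purely computational rather than conceptual: enumerating $\mathcal{M}(H)$ for each of the nearly two million base graphs and, worse, running through its minimal complete subfamilies of size $3$ would be prohibitively expensive without substantial pruning. The key speedups I would expect to rely on are (i) early rejection of $H$ via cheap necessary conditions for $\mathcal{M}(H)$ to admit any complete triple, (ii) isomorph rejection at the level of $H$ using \emph{nauty} to avoid duplicated work across symmetric base graphs, and (iii) reusing the $(3,3)$-free edge colourings of $H$ as a precomputed table so that the marked-vertex-set test and the complete-family test reduce to intersections of bitmasks. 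Once these are in place, the counts in Tables~\ref{table: steps in finding all minimal (3, 3)-Ramsey graphs with up to 12 vertices} and~\ref{table: 13-vertex graphs properties} serve as internal consistency checks on the implementation.
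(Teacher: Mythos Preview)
Your proposal is correct and follows essentially the same approach as the paper: split according to whether $\alpha(G)=2$ (handled via McKay's catalogue of $275\,086$ graphs with $\alpha<3$, $\omega<6$ on $13$ vertices, yielding $13$ survivors) or $\alpha(G)\geq 3$ (handled by Algorithm~\ref{algorithm: finding n-vertex minimal (3, 3)-Ramsey graphs G for which alpha(G) geq n - k geq 1} with $n=13$, $k=10$, $q=6$, over the $1\,923\,103$ ten-vertex base graphs, yielding $306\,622$). The implementation remarks you add (bitmask tables for colourings, isomorph rejection on $H$) are reasonable elaborations but not part of the paper's argument.
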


We enumerate the obtained 13-vertex $(3, 3)$-Ramsey graphs: $G_{13.1}$, ..., $G_{13.306635}$.\\

As noted, all graphs $G$ for which $\alpha(G) < 3$ and $\omega(G) < 6$ are known and from $R(3, 6) = 18$ it follows that these graphs have at most 17 vertices. By computer check we find that there are no minimal $(3, 3)$-Ramsey graphs with independence number 2 and more than 13 vertices. Thus, we prove

\begin{theorem}
\label{theorem: minimal (3, 3)-Ramsey graphs G for which alpha(G) = 2}
Let $G$ be a minimal $(3, 3)$-Ramsey graph and $\alpha(G) = 2$. Then, $\abs{\V(G)} \leq 13$. There are exactly 145 minimal $(3, 3)$-Ramsey graphs for which $\alpha(G) = 2$:

- 8-vertex: 1 ($K_3+C_5$);

- 9-vertex: 1 (see Figure \ref{figure: Nenov_9});

- 10-vertex: 3 ($G_{10.3}$, $G_{10.5}$, $G_{10.6}$, see Figure \ref{figure: 10});

- 11-vertex: 4 ($G_{11.46}$, $G_{11.47}$, $G_{11.54}$, $G_{11.69}$, see Figure \ref{figure: 11_a2});

- 12-vertex: 124;

- 13-vertex: 13 (see Figure \ref{figure: 13_a2});
\end{theorem}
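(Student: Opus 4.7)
The bound $\abs{\V(G)} \leq 13$ splits naturally into two parts: an \emph{a priori} upper bound of $17$ from Ramsey theory, and a computer-assisted refinement from $17$ down to $13$. The plan is to establish first that any minimal $(3,3)$-Ramsey graph $G$ with $\alpha(G) = 2$ must satisfy $\abs{\V(G)} \leq 17$. Indeed, since $G$ is minimal and $G \neq K_6$ (because $\alpha(K_6) = 1$ would be the only case and we'd still fall within our count), $G$ cannot contain $K_6$ as a proper subgraph, so $\omega(G) \leq 5$. The hypothesis $\alpha(G) = 2$ means $G$ contains no independent set of size $3$. By $R(3,6) = 18$, any graph on at least $18$ vertices contains either an independent $3$-set or a $6$-clique, so $\abs{\V(G)} \leq 17$.

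Next, I would use the fact, cited from McKay's database \cite{McK_r}, that the set of all graphs $G$ with $\alpha(G) < 3$ and $\omega(G) < 6$ is completely enumerated (and contains only finitely many graphs, all with at most $17$ vertices). For each such graph on $n$ vertices with $14 \leq n \leq 17$, I would apply a direct Ramsey test: check whether $G \rightarrow (3,3)$, and if so, whether $G - e \not\rightarrow (3,3)$ for every edge $e \in \E(G)$. The plan is to verify by this exhaustive check that no such graph is a minimal $(3,3)$-Ramsey graph, thereby sharpening the bound to $\abs{\V(G)} \leq 13$.

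For the classification and the count $145 = 1+1+3+4+124+13$, I would simply extract, from the lists of all minimal $(3,3)$-Ramsey graphs on $n$ vertices obtained in the previous sections (Theorems on $8$-, $9$-, $10$-, $11$-, $12$-, and $13$-vertex minimal Ramsey graphs), those graphs for which $\alpha(G) = 2$. The figures in the statement serve to display the small cases explicitly, and the counts in each row are read off directly from the enumerations.

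The main obstacle is computational rather than conceptual: the number of graphs on $14$--$17$ vertices in McKay's list with $\alpha < 3$ and $\omega < 6$ is large (in particular, on $17$ vertices it is substantial), and for each one the Ramsey test $G \rightarrow (3,3)$ plus the minimality check over all edges must be performed. An efficient implementation using the coloring-tree methods described earlier (exploiting $(3,3)$-free $2$-colorings and the marked-vertex-set framework of Definition \ref{definition: marked vertex set}) should handle this, but the verification that no $14$-, $15$-, $16$-, or $17$-vertex candidate survives is precisely the content of the computer check asserted in the paragraph preceding the theorem, and is the only non-trivial step.
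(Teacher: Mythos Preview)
Your proposal is correct and follows essentially the same approach as the paper: use $\alpha(G)=2$ together with $\omega(G)\le 5$ (from minimality and $G\neq K_6$) and $R(3,6)=18$ to bound $\abs{\V(G)}\le 17$; then run a computer check over McKay's finite list of graphs with $\alpha<3$ and $\omega<6$ on $14$--$17$ vertices to rule out larger examples; finally read off the counts for $n\le 13$ from the enumerations already obtained. The only cosmetic difference is that in the paper the thirteen $13$-vertex graphs with $\alpha=2$ are found directly from McKay's list (as step~1 of Section~5) rather than extracted afterward from the full $13$-vertex enumeration, but this is the same computation in a different order.
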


By executing Algorithm \ref{algorithm: finding n-vertex minimal (3, 3)-Ramsey graphs G for which alpha(G) geq n - k geq 1}($n = 10, 11, 12; k = 7, 8, 9; q = 6$), we find all minimal $(3, 3)$-Ramsey graphs with 10, 11 and 12 vertices and independence number greater than 2. In this way, with the help of Theorem \ref{theorem: minimal (3, 3)-Ramsey graphs G for which alpha(G) = 2}, we obtain a new proof of Theorem \ref{theorem: 10-vertex minimal (3, 3)-Ramsey graphs}, Theorem \ref{theorem: 11-vertex minimal (3, 3)-Ramsey graphs} and Theorem \ref{theorem: 12-vertex minimal (3, 3)-Ramsey graphs}.

\section{Corollaries from the obtained results}

\subsection{Minimum and maximum degree}

\begin{figure}[h]
	\centering
	\includegraphics[trim={0 470 0 0},clip,height=160px,width=160px]{./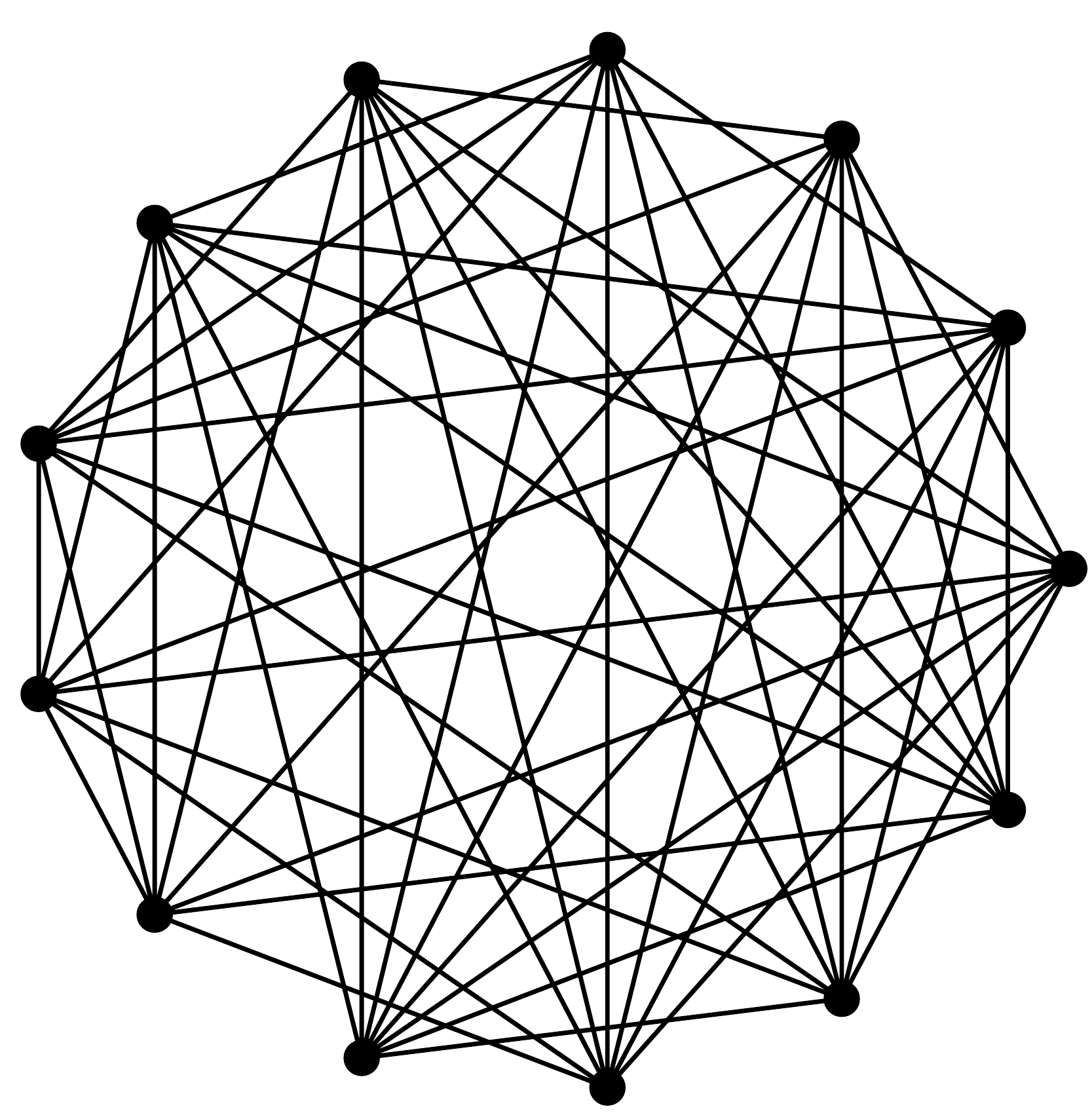}
	\caption{8-regular 13-vertex minimal $(3, 3)$-Ramsey graph}
	\label{figure: 13_regular}
\end{figure}

By Theorem \ref{theorem: delta(G) geq (p-1)^2}, if $G$ is a minimal $(3, 3)$-Ramsey graph, then $\delta(G) \geq 4$. Via very elegant constructions, in \cite{BEL76} and \cite{FL06} it is proved that the bound $\delta(G) \geq (p - 1)^2$ from Theorem \ref{theorem: delta(G) geq (p-1)^2} is exact. However, these constructions are not very economical in the case $p = 3$. For example, the minimal $(3, 3)$-Ramsey graph $G$ from \cite{FL06} with $\delta(G) = 4$ is not presented explicitly, but it is proved that it is a subgraph of a graph with 17577 vertices. From the next theorem we see that the smallest minimal $(3, 3)$-Ramsey graph $G$ with $\delta(G) = 4$ has 10 vertices:

\begin{theorem}
\label{theorem: 10-vertex minimal (3, 3)-Ramsey graph G for which delta(G) = 4}
Let $G$ be a minimal $(3, 3)$-Ramsey graph and $\delta(G) = 4$. Then, $\abs{\V(G)} \geq 10$. There is only one 10-vertex minimal $(3, 3)$-Ramsey graph $G$ with $\delta(G) = 4$, namely $G_{10.2}$ (see Figure \ref{figure: 10}). What is more, $G$ has only a single vertex of degree 4. For all other 10-vertex minimal $(3, 3)$-Ramsey graphs $G$, $\delta(G) = 5$.
\end{theorem}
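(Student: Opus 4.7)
The plan is to combine the enumeration of small minimal $(3,3)$-Ramsey graphs recalled and established in the preceding sections with a direct inspection of the data in Table~\ref{table: 10-vertex graphs properties} and Figure~\ref{figure: 10}. Since the total number of minimal $(3,3)$-Ramsey graphs on at most 10 vertices is small and each such graph is explicitly known, the proof is essentially a verification.

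For the lower bound $|\V(G)| \geq 10$: the minimal $(3,3)$-Ramsey graphs with fewer than 10 vertices are precisely $K_6$, the Graham graph $K_3+C_5$ on 8 vertices, and the 9-vertex Nenov graph from \cite{Nen79}. I would verify directly that each of these has minimum degree at least 5. Indeed, $\delta(K_6)=5$; in $K_3+C_5$ every vertex of the $C_5$ has degree $2+3=5$ and every vertex of the $K_3$ has degree $2+5=7$; and for the 9-vertex Nenov graph the degrees can be read off from Figure~\ref{figure: Nenov_9}. Hence no minimal $(3,3)$-Ramsey graph on at most 9 vertices has a vertex of degree $4$, which forces $|\V(G)|\geq 10$.

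For the 10-vertex case, Theorem~\ref{theorem: 10-vertex minimal (3, 3)-Ramsey graphs} asserts that there are exactly six minimal $(3,3)$-Ramsey graphs $G_{10.1},\dots,G_{10.6}$, and the $\delta(G)$ column of Table~\ref{table: 10-vertex graphs properties} records that exactly one of them has $\delta=4$, four have $\delta=5$, and one has $\delta=6$. Consulting the \emph{nauty} canonical order used to index the graphs shows that the unique graph with $\delta=4$ is $G_{10.2}$ (see Figure~\ref{figure: 10}); this simultaneously proves uniqueness at $n=10$ and shows that all five remaining $10$-vertex graphs have $\delta\geq 5$. Finally, a direct inspection of $G_{10.2}$ gives that it possesses exactly one vertex $v$ of degree $4$; as a consistency check, Corollary~\ref{corollary: if d(v) = 4, then G(v) = K_4} forces $G(v)=K_4$ at any such vertex, and this is visible in the drawing.

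The main obstacle is the dependence of the argument on the computer enumeration of Theorem~\ref{theorem: 10-vertex minimal (3, 3)-Ramsey graphs}; without that classification one would need a fundamentally different approach to rule out 10-vertex minimal $(3,3)$-Ramsey graphs with two vertices of degree $4$. Once the enumeration is available, however, every claim reduces to reading an entry of Table~\ref{table: 10-vertex graphs properties} or counting degree-$4$ vertices in a single explicit graph, so no further non-trivial step is required.
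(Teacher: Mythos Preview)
Your proposal is correct and follows essentially the same approach as the paper: the paper offers no separate proof for this theorem, presenting it as a direct consequence of the computer enumeration (the known graphs on at most 9 vertices together with Theorem~\ref{theorem: 10-vertex minimal (3, 3)-Ramsey graphs} and Table~\ref{table: 10-vertex graphs properties}), which is exactly the verification you carry out. One small remark: you write that the five remaining 10-vertex graphs have $\delta\geq 5$, whereas the theorem asserts $\delta(G)=5$; Table~\ref{table: 10-vertex graphs properties} in fact lists one graph with $\delta=6$, so your cautious phrasing is the accurate one.
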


Let $G$ be a $(3, 3)$-Ramsey graph. By Theorem \ref{theorem: chi(G) geq R(p, q)}, $\chi(G) \geq 6$ and from the inequality $\chi(G) \leq \Delta(G) + 1$ (see \cite{Har69}) we obtain $\Delta(G) \geq 5$. From the Brooks' Theorem (see \cite{Har69}) it follows that if $G \neq K_6$, then $\Delta(G) \geq 6$. The following related question arises naturally:
\begin{align*}
\label{question: 6-regular minimal (3, 3)-Ramsey graph}
&\emph{Are there minimal $(3, 3)$-Ramsey graphs which are 6-regular?}\\
&\emph{(i.e. $d(v) = 6, \forall v \in \V(G)$)}
\end{align*}
From the obtained minimal $(3, 3)$-Ramsey graphs we see that the following theorem is true:
\begin{theorem}
\label{theorem: regular minimal (3, 3)-Ramsey graphs}
Let $G$ be a regular minimal $(3, 3)$-Ramsey graph and $G \neq K_6$. Then, $\abs{\V(G)} \geq 13$. There is only one regular minimal $(3, 3)$-Ramsey with 13 vertices, and this is the graph presented on Figure \ref{figure: 13_regular}, which is 8-regular.
\end{theorem}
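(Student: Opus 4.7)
The plan is a direct verification against the exhaustive classification tables already built in Sections 4 and 5. Since a graph is regular precisely when $\delta(G) = \Delta(G)$, the $\delta$ and $\Delta$ columns of Tables \ref{table: 10-vertex graphs properties}--\ref{table: 13-vertex graphs properties} will dispatch most of the cases immediately, leaving only a small number of graphs to inspect individually.

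First I would handle $\abs{\V(G)} \le 9$ by hand. By the discussion in Section 4, the only minimal $(3, 3)$-Ramsey graphs with at most $9$ vertices other than $K_6$ are the Graham graph $K_3 + C_5$ and the 9-vertex Nenov graph of Figure \ref{figure: Nenov_9}. In $K_3 + C_5$ the $K_3$-vertices have degree $7$ and the $C_5$-vertices have degree $5$, so it is not regular; the Nenov graph likewise has a non-constant degree sequence readable off Figure \ref{figure: Nenov_9}. Theorem \ref{theorem: finding all minimal (3, 3)-Ramsey graphs on n vertices} rules out any further examples in this range (in particular on $7$ vertices there are none).

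For $10 \le \abs{\V(G)} \le 12$ I would simply read the $\delta$ and $\Delta$ columns of Tables \ref{table: 10-vertex graphs properties}, \ref{table: 11-vertex graphs properties} and \ref{table: 12-vertex graphs properties}. On $10$ vertices every minimal $(3, 3)$-Ramsey graph has $\Delta = 9$ while $\delta \le 6$; on $11$ vertices $\Delta \in \{8, 10\}$ while $\delta \le 6$; on $12$ vertices $\Delta \ge 8$ while $\delta \le 7$. In every case $\delta < \Delta$, so no such graph is regular, which proves $\abs{\V(G)} \ge 13$.

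For $\abs{\V(G)} = 13$, Table \ref{table: 13-vertex graphs properties} gives $\delta \le 8$ and $\Delta \ge 8$, so any regular example must have common degree $8$. The table records only $16$ graphs with $\Delta = 8$ (and $83$ with $\delta = 8$), so the regular candidates form a subset of those $16$ graphs; checking them one by one isolates exactly one 8-regular graph, namely the one on Figure \ref{figure: 13_regular}. The only real obstacle is trust in the underlying enumeration; once that is granted, the entire argument is a short inspection of already-tabulated degree data.
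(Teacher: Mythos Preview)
Your proposal is correct and follows the same approach as the paper, which simply states that the theorem is read off from the complete enumeration of minimal $(3,3)$-Ramsey graphs on at most $13$ vertices. You have merely made explicit how the non-regularity for $\abs{\V(G)}\le 12$ and the uniqueness at $\abs{\V(G)}=13$ can be extracted from the $\delta$/$\Delta$ columns of Tables~\ref{table: 10-vertex graphs properties}--\ref{table: 13-vertex graphs properties}, which the paper leaves implicit.
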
	

In regard to the maximum degree of the minimal $(3, 3)$-Ramsey graphs we obtain the following result:
\begin{theorem}
\label{theorem: maximum degree of minimal (3, 3)-Ramsey graphs}
Let $G$ be a minimal $(3, 3)$-Ramsey graph. Then:

(a) $\Delta(G) = \abs{\V(G)} - 1$, if $\abs{\V(G)} \leq 10$.

(b) $\Delta(G) \geq 8$, if $\abs{\V(G)} = 11$, $12$ or $13$.
\end{theorem}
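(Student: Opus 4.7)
The plan is to derive both parts from the explicit enumerations established in Theorems \ref{theorem: 10-vertex minimal (3, 3)-Ramsey graphs}, \ref{theorem: 11-vertex minimal (3, 3)-Ramsey graphs}, \ref{theorem: 12-vertex minimal (3, 3)-Ramsey graphs} and \ref{theorem: 13-vertex minimal (3, 3)-Ramsey graphs}: once the complete lists of all minimal $(3,3)$-Ramsey graphs on $n \leq 13$ vertices are in hand, the statement reduces to reading off the maximum degree in each case. The relevant data is already tabulated in the $\Delta(G)$ columns of Tables \ref{table: 10-vertex graphs properties}--\ref{table: 13-vertex graphs properties}.

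For part (a) I would argue by cases on $|V(G)|\in\{6,7,8,9,10\}$. For $|V(G)|=6$ we have $G=K_6$ and $\Delta(G)=5$. There is no minimal $(3,3)$-Ramsey graph on $7$ vertices. For $|V(G)|=8$ the unique such graph is the Graham graph $K_3+C_5$, for which the three vertices of the $K_3$-summand are joined to all remaining $7$ vertices, giving $\Delta(G)=7$. For $|V(G)|=9$ the unique such graph is the Nenov graph of Figure \ref{figure: Nenov_9}, which one sees directly has a universal vertex and hence $\Delta(G)=8$. For $|V(G)|=10$ the $\Delta(G)$-entries of Table \ref{table: 10-vertex graphs properties} record that all $6$ graphs have $\Delta(G)=9$. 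In every case $\Delta(G)=|V(G)|-1$.

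For part (b) I would simply inspect the $\Delta(G)$-columns of Tables \ref{table: 11-vertex graphs properties}, \ref{table: 12-vertex graphs properties} and \ref{table: 13-vertex graphs properties}. These show that for $|V(G)|=11,12,13$ the values of $\Delta(G)$ that actually occur among minimal $(3,3)$-Ramsey graphs on that many vertices are, respectively, $\{8,10\}$, $\{8,9,11\}$ and $\{8,9,10,12\}$; in each case the minimum is $8$, so $\Delta(G)\geq 8$, as claimed.

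The theorem is thus essentially a corollary of the enumeration, and the only real obstacle is establishing the completeness of the lists themselves---which is precisely the work carried out by Algorithm \ref{algorithm: finding all minimal (3, 3)-Ramsey graphs on n vertices} and Algorithm \ref{algorithm: finding n-vertex minimal (3, 3)-Ramsey graphs G for which alpha(G) geq n - k geq 1} in the earlier sections. Given that input, no further graph-theoretic argument is required here; the proof is a table-lookup.
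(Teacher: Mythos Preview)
Your proposal is correct and matches the paper's approach: the theorem is stated in Section~6 as a direct consequence of the complete enumerations in Theorems \ref{theorem: 10-vertex minimal (3, 3)-Ramsey graphs}--\ref{theorem: 13-vertex minimal (3, 3)-Ramsey graphs}, with no further proof given, and the $\Delta(G)$-columns of Tables \ref{table: 10-vertex graphs properties}--\ref{table: 13-vertex graphs properties} supply exactly the data you read off. Your case-by-case treatment for $|\V(G)|\leq 10$ just makes explicit what the paper leaves implicit.
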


\subsection{Chromatic number}

By Theorem \ref{theorem: chi(G) geq R(p, q)}, if $G$ is a $(3, 3)$-Ramsey graph, then $\chi(G) \geq 6$.

From the obtained minimal $(3, 3)$-Ramsey graphs we derive the following results:

\begin{theorem}
\label{theorem: if abs(V(G)) leq 12, then chi(G) leq 7}
Let $G$ be a minimal $(3, 3)$-Ramsey graph and $\abs{\V(G)} \leq 12$. Then $\chi(G) = 6$.
\end{theorem}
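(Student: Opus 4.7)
The plan is to split the statement into the standard lower bound and a case check on the enumerated graphs. The lower bound $\chi(G)\geq 6$ is immediate from Theorem \ref{theorem: chi(G) geq R(p, q)}, since $G\to(3,3)$ forces $\chi(G)\geq R(3,3)=6$. So the entire content is the upper bound $\chi(G)\leq 6$ for every minimal $(3,3)$-Ramsey graph $G$ with $\abs{\V(G)}\leq 12$.

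For this I would appeal directly to the complete enumeration that has just been established. By the known classification for $n\leq 9$ (namely $K_6$, the Graham graph $K_3+C_5$ on $8$ vertices, and the Nenov graph on $9$ vertices), together with Theorems \ref{theorem: 10-vertex minimal (3, 3)-Ramsey graphs}, \ref{theorem: 11-vertex minimal (3, 3)-Ramsey graphs} and \ref{theorem: 12-vertex minimal (3, 3)-Ramsey graphs}, we have an explicit finite list of all minimal $(3,3)$-Ramsey graphs on at most $12$ vertices: $1+1+1+6+73+3041$ graphs in total. For each such graph it suffices to exhibit a proper $6$-coloring of the vertices. On graphs of order at most $12$ this is a routine backtracking computation (or can be read off from a direct $\chi$-computation in \emph{nauty} or any standard package), and the outcomes are already tabulated in the $\chi(G)$ columns of Tables \ref{table: 10-vertex graphs properties}, \ref{table: 11-vertex graphs properties} and \ref{table: 12-vertex graphs properties}, each of which lists only the entry ``$6$'' with a count equal to the total number of graphs at that order.

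The proof would therefore reduce to the remark that, combining the lower bound $\chi(G)\geq 6$ with these tabulated values, $\chi(G)=6$ holds uniformly for every graph in the list, hence for every minimal $(3,3)$-Ramsey graph $G$ with $\abs{\V(G)}\leq 12$.

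The only real obstacle is the reliability of the computational verification, since mathematically the argument is a pure finite case check depending on the correctness of the enumeration in Section 4 and of the chromatic number computation. The former is independently sanity-checked against the previously known values at $n=8,9$ and against the three known $10$-vertex examples; the latter is trivial in size (at most $12$ vertices, well within exact $\chi$ solvers). Once both are accepted, no further structural argument is required.
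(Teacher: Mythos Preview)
Your proposal is correct and matches the paper's approach exactly: the theorem is stated without separate proof, being derived directly from the complete enumeration of minimal $(3,3)$-Ramsey graphs on at most $12$ vertices (Theorems \ref{theorem: 10-vertex minimal (3, 3)-Ramsey graphs}--\ref{theorem: 12-vertex minimal (3, 3)-Ramsey graphs} and the known small cases) together with the computed chromatic numbers recorded in Tables \ref{table: 10-vertex graphs properties}--\ref{table: 12-vertex graphs properties}, combined with the lower bound $\chi(G)\geq 6$ from Theorem \ref{theorem: chi(G) geq R(p, q)}.
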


\begin{theorem}
\label{theorem: 7-chromatic (3, 3)-Ramsey graphs}
Let G be a minimal $(3,3)$-Ramsey graph and $|V(G)| \leq 14$. Then $\chi(G) \leq 7$. The smallest 7-chromatic minimal $(3,3)$-Ramsey graphs are the 13 minimal $(3,3)$-Ramsey graph with 13 vertices and independence number 2, given on Figure \ref{figure: 13_a2}.
\end{theorem}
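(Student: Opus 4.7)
The plan is to prove the upper bound $\chi(G) \leq 7$ by cases on $|\V(G)|$, and then to pin down the smallest $7$-chromatic examples using the inequality $\chi(G) \geq |\V(G)|/\alpha(G)$ together with previous enumerations. For $|\V(G)| \leq 12$ the bound follows at once from Theorem \ref{theorem: if abs(V(G)) leq 12, then chi(G) leq 7}, which gives the stronger conclusion $\chi(G) = 6$. For $|\V(G)| = 13$ it is read directly off Table \ref{table: 13-vertex graphs properties}: among the $306\,635$ graphs enumerated in Theorem \ref{theorem: 13-vertex minimal (3, 3)-Ramsey graphs}, exactly $306\,622$ have $\chi(G) = 6$ and $13$ have $\chi(G) = 7$, so no $13$-vertex graph in the list is $8$-chromatic.

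For $|\V(G)| = 14$ the approach is to reuse the two-branch strategy of Section 5. First, run Algorithm \ref{algorithm: finding n-vertex minimal (3, 3)-Ramsey graphs G for which alpha(G) geq n - k geq 1} with parameters $n = 14$, $k = 11$, $q = 6$ to produce all $14$-vertex minimal $(3,3)$-Ramsey graphs with $\alpha(G) \geq 3$; second, by $R(3,6) = 18$ all graphs with $\alpha < 3$ and $\omega < 6$ are available from \cite{McK_r}, so filtering the $14$-vertex ones for the minimal $(3,3)$-Ramsey property yields those with $\alpha(G) = 2$ (the graphs with $\alpha = 1$ being only $K_{14} \supsetneq K_6$, which are non-minimal). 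A direct computation of the chromatic number on every graph in the resulting list then confirms $\chi(G) \leq 7$. This will be the main obstacle, since Algorithm \ref{algorithm: finding all minimal (3, 3)-Ramsey graphs on n vertices} is already infeasible at $n = 13$, and the hybrid method must be assembled carefully so that neither branch misses a graph.

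For the second assertion, Theorem \ref{theorem: if abs(V(G)) leq 12, then chi(G) leq 7} forces any $7$-chromatic minimal $(3,3)$-Ramsey graph to have at least $13$ vertices, so it suffices to describe the $7$-chromatic graphs on $13$ vertices. For such a graph with $\alpha(G) = 2$ one has $\chi(G) \geq |\V(G)|/\alpha(G) = 13/2 > 6$, and combined with the upper bound above this forces $\chi(G) = 7$. Theorem \ref{theorem: minimal (3, 3)-Ramsey graphs G for which alpha(G) = 2} lists exactly $13$ minimal $(3,3)$-Ramsey graphs with $|\V(G)| = 13$ and $\alpha(G) = 2$, while Table \ref{table: 13-vertex graphs properties} records exactly $13$ with $\chi(G) = 7$; since the former set is contained in the latter and the two have equal cardinality, they coincide, and these are precisely the graphs shown on Figure \ref{figure: 13_a2}.
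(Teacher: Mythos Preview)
Your argument is correct in principle but takes a different route from the paper for the case $|\V(G)| = 14$. The paper does not enumerate the $14$-vertex minimal $(3,3)$-Ramsey graphs at all; instead it argues by contradiction: if $\chi(G) \geq 8$ then, by a structural result of Nenov \cite{Nen10} (combined with $\omega(G) = 5$ from Theorem~\ref{theorem: F_e(3, 3; 5) = 15}), $G$ must equal the single graph $K_1 + Q$ with $\overline{Q}$ as in Figure~\ref{figure: Q_13}, and one checks directly that $K_1 + Q$ is a $(3,3)$-Ramsey graph but not a minimal one. This sidesteps the full $n = 14$ enumeration entirely.

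Your approach---running Algorithm~\ref{algorithm: finding n-vertex minimal (3, 3)-Ramsey graphs G for which alpha(G) geq n - k geq 1} with $n=14$, $k=11$, $q=6$ and then computing $\chi$ on the output---would also work, but it is a substantial new computation not carried out in the paper (the enumeration stops at $n=13$), and your sentence ``a direct computation \ldots\ confirms $\chi(G)\le 7$'' is asserting the outcome of work that has not been reported. Incidentally, Theorem~\ref{theorem: minimal (3, 3)-Ramsey graphs G for which alpha(G) = 2} already tells you the $\alpha = 2$ branch at $n=14$ is empty, so only the $\alpha \ge 3$ branch would need to be run. The trade-off is clear: the paper's route is short and theoretical but imports an external classification theorem; yours is self-contained within the paper's algorithmic framework but costs a large additional search.

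For the second assertion your cardinality-matching argument (the $13$ graphs with $\alpha = 2$ are $7$-chromatic by $\chi \ge 13/2$, and Table~\ref{table: 13-vertex graphs properties} records exactly $13$ graphs with $\chi = 7$, so the two sets coincide) is a neat alternative to the paper's direct computer check that no $13$-vertex graph with $\alpha \ge 3$ has $\chi = 7$.
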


\begin{proof}
Suppose the opposite is true, i.e. $\chi(G) \geq 8$. Then, according to \cite{Nen10}, $G = K_1 + Q$, where $\overline{Q}$ is the graph presented on Figure \ref{figure: Q_13}. The graph $K_1 + Q$ is a $(3, 3)$-Ramsey graph, but it is not minimal. By Theorem \ref{theorem: if abs(V(G)) leq 12, then chi(G) leq 7}, there are no 7-chromatic minimal $(3, 3)$-Ramsey graphs with less than 13 vertices. The graphs on Figure \ref{figure: 13_a2} are 13-vertex minimal $(3, 3)$-Ramsey graphs with independence number 2, and therefore these graphs are 7-chromatic. By computer check, we find that among the 13-vertex $(3, 3)$-Ramsey graphs with independence number greater than 2 there are no 7-chromatic graphs.
\end{proof}

\begin{figure}[h]
	\centering
	\includegraphics[height=160px,width=160px]{./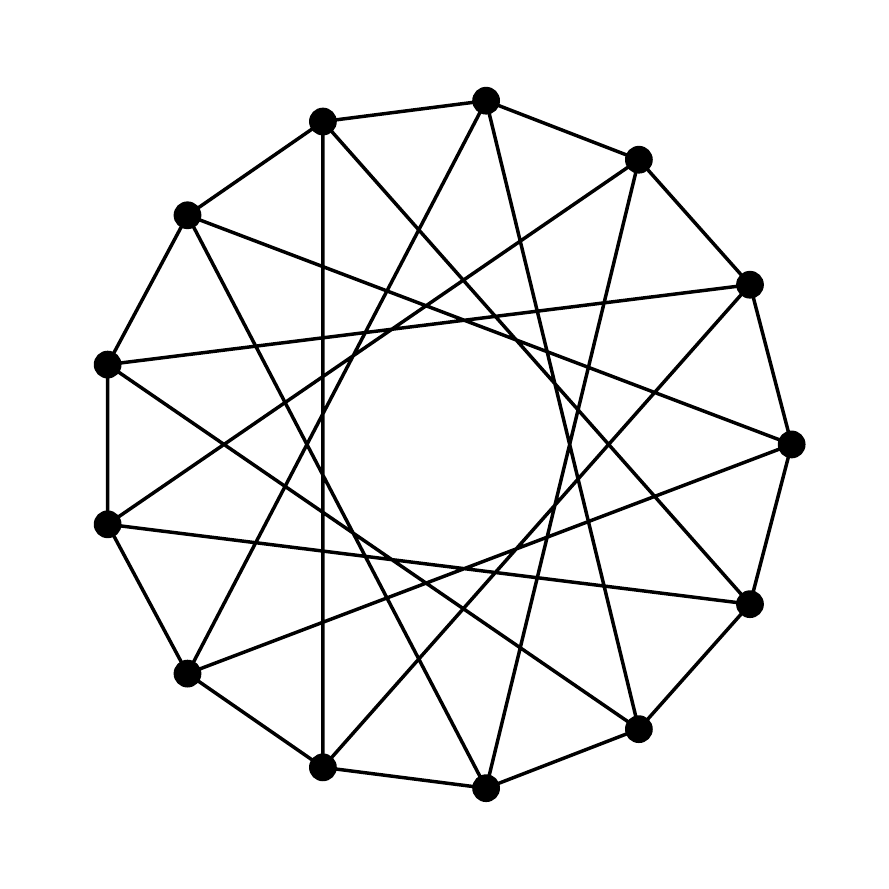}
	\caption{Graph $\overline{Q}$}
	\label{figure: Q_13}
\end{figure}

\subsection{Multiplicities}

\begin{definition}
\label{definition: K_3-multiplicity}
Denote by $M(G)$ the minimum number of monochromatic triangles in all 2-colorings of $\E(G)$. The number $M(G)$ is called a $K_3$-multiplicity of the graph $G$.
\end{definition}

In \cite{Goo59} the $K_3$-multiplicities of all complete graphs are computed, i.e. $M(K_n)$ is computed for all positive integers $n$. Similarly, the $K_p$-multiplicity of a graph is defined \cite{HP74}. The following works are dedicated to the computation of the multiplicities of some concrete graphs: \cite{Jac80}, \cite{Jac82}, \cite{RS77}, \cite{BR90}, \cite{PR01}.

With the help of a computer, we check the $K_3$-multiplicities of the obtained minimal $(3, 3)$-Ramsey graphs and we derive the following results:

\begin{theorem}
\label{theorem: if \abs(V(G)) leq 13 and G neq K_6, then M(G) = 1}
If $G$ is a minimal $(3, 3)$-Ramsey graph, $\abs{\V(G)} \leq 13$ and $G \neq K_6$, then $M(G) = 1$.
\end{theorem}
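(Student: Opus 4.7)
The plan is to combine the complete enumerations of minimal $(3,3)$-Ramsey graphs with at most $13$ vertices (Theorems~\ref{theorem: 10-vertex minimal (3, 3)-Ramsey graphs}--\ref{theorem: 12-vertex minimal (3, 3)-Ramsey graphs} and Theorem~\ref{theorem: 13-vertex minimal (3, 3)-Ramsey graphs}) with a direct computer verification of $M(G)=1$ for every graph on the resulting list, exploiting minimality to narrow the search.

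First I would dispose of the trivial lower bound: since $G \rightarrow (3,3)$, every $2$-coloring of $\E(G)$ contains at least one monochromatic triangle, so $M(G) \geq 1$. The entire content of the theorem is therefore the upper bound $M(G) \leq 1$, i.e.\ the existence, for every such $G$, of a $2$-coloring of $\E(G)$ containing exactly one monochromatic triangle.

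For the upper bound I would exploit minimality. Pick any edge $e=[u,v]\in \E(G)$. Since $G$ is a minimal $(3,3)$-Ramsey graph, $G-e \not\rightarrow (3,3)$, so there is a $(3,3)$-free $2$-coloring $\varphi$ of $\E(G-e)$. Extending $\varphi$ to $\E(G)$ by assigning a color $c$ to $e$ produces a $2$-coloring whose monochromatic triangles are exactly those triples $\{u,v,w\}$ for which $w$ is a common neighbor of $u$ and $v$ and both $\varphi([u,w])$ and $\varphi([v,w])$ equal $c$. Consequently $M(G)=1$ as soon as one can exhibit an edge $e$, a $(3,3)$-free coloring $\varphi$ of $G-e$, and a color $c$ for which this count is exactly one. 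The search for such a triple $(e,\varphi,c)$ is exactly what the computer program will perform for each of the (at most $306\,635 + 3041 + 73 + 6 + 1 + 1 + 1$) graphs in our list; it can be made efficient by reusing, for each $e$, the $(3,3)$-free colorings of $G-e$ that Algorithm~\ref{algorithm: finding all minimal (3, 3)-Ramsey graphs on n vertices} already has to enumerate in step~6 when certifying minimality.

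The main obstacle is essentially one of scale rather than mathematical depth: the $13$-vertex case involves more than $3\times 10^5$ graphs, so the verification must be organized carefully (e.g.\ by cutting the search the instant the first coloring of $G$ with exactly one monochromatic triangle is found, and by pruning on edges $e$ with few common neighbors of their endpoints to keep the triangle count small). No new combinatorial ingredient is needed beyond the minimality of $G$, and the output of the computation completes the proof for all graphs with $\abs{\V(G)} \leq 13$, $G\neq K_6$.
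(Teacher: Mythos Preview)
Your proposal is correct and takes essentially the same approach as the paper: both rely on the complete enumerations of minimal $(3,3)$-Ramsey graphs on at most $13$ vertices and then perform a computer check that $M(G)=1$ for each graph on the list. The paper states this only as ``with the help of a computer, we check the $K_3$-multiplicities'', whereas you add the (correct and complete) observation that a $2$-coloring with exactly one monochromatic triangle exists if and only if some triple $(e,\varphi,c)$ of the kind you describe exists, which gives a concrete search strategy; but the underlying method is the same.
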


We suppose the following hypothesis is true:
\begin{hypothesis}
\label{hypothesis: if G is a minimal (3, 3)-Ramsey graph and G neq K_6, then M(G) = 1}
If $G$ is a minimal $(3, 3)$-Ramsey graph and $G \neq K_6$, then $M(G) = 1$.
\end{hypothesis}

In support to this hypothesis we prove the following:

\begin{proposition}
\label{proposition: if G is a minimal (3, 3)-Ramsey graph, G neq K_6 and delta(G) leq 5, then M(G) = 1}
If $G$ is a minimal $(3, 3)$-Ramsey graph, $G \neq K_6$ and $\delta(G) \leq 5$, then $M(G) = 1$.
\end{proposition}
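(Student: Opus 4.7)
The plan is to construct a 2-coloring of $\E(G)$ with exactly one monochromatic triangle. By Theorem \ref{theorem: delta(G) geq (p-1)^2} we have $\delta(G) \in \set{4,5}$, and since $G$ is minimal with $G \neq K_6$ we get $\omega(G) \leq 5$ (otherwise $K_6 \subseteq G$ would contradict minimality). Fix $v \in \V(G)$ with $d(v) = \delta(G)$, set $H = G(v)$, and take a $(3,3)$-free 2-coloring $c$ of $G - v$, which exists by minimality. For any bipartition $\V(H) = X \cup Y$ and any choice of color $\alpha$ (with $\beta$ the opposite color), extending $c$ by coloring $vx$ with $\alpha$ for $x \in X$ and $vy$ with $\beta$ for $y \in Y$ produces a 2-coloring of $G$ with exactly
\[
N(X, \alpha) \;:=\; \#\set{\alpha\text{-edges in } H[X]} + \#\set{\beta\text{-edges in } H[Y]}
\]
monochromatic triangles (they all pass through $v$, since $c$ has none in $G - v$); because $G \rightarrow (3,3)$, every such $N(X,\alpha)$ is $\geq 1$. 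It suffices to produce $(X,\alpha)$ with $N(X,\alpha) = 1$.

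For $d(v) = 5$: since $H \neq K_5$, there exist non-adjacent $x, y \in \V(H)$. Take $X = \set{x,y}$ and $Y = \V(H) \setminus X$; the first summand of $N$ vanishes because $H[X]$ has no edge. By Theorem \ref{theorem: alpha(G(v)) leq d(v) - 3}, $\alpha(H) \leq 2$, so the three-vertex set $Y$ contains at least one edge of $H$. Letting $r$ and $b$ denote the numbers of red and blue edges in $H[Y]$, the validity constraint $N(X,\alpha) \geq 1$ applied to both color choices forces $r \geq 1$ and $b \geq 1$; together with $r + b \leq 3$ this gives $\min(r,b) = 1$. Choosing $\alpha$ so that $\beta$ is the minority color yields $N(X,\alpha) = 1$.

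For $d(v) = 4$: by Corollary \ref{corollary: if d(v) = 4, then G(v) = K_4}, $H = K_4$. A direct enumeration shows that, up to color swap, $K_4$ admits exactly two $(3,3)$-free 2-colorings: (a) two complementary Hamiltonian paths $P_4 + P_4$, and (b) a Hamiltonian cycle together with a perfect matching $C_4 +$ matching. In case (a), with red path $a$-$b$-$c$-$d$, the choice $X = \set{a,d}$ with $\alpha = $ red gives $N = 0$, violating $G \rightarrow (3,3)$; hence $c$ must restrict to $H$ as case (b). Writing the red cycle as $a$-$b$-$c$-$d$-$a$, the choice $X = \set{a,b}$, $Y = \set{c,d}$ with $\alpha = $ red then yields $N = 1 + 0 = 1$.

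Combining both cases gives $M(G) \leq 1$, and with $M(G) \geq 1$ from $G \rightarrow (3,3)$ we conclude $M(G) = 1$. I expect the main obstacle to be the $d(v) = 5$ case: the key interplay is between the non-edge in $H$ supplied by $G \neq K_6$ and the validity constraint $N(X,\alpha) \geq 1$, which together force both $r$ and $b$ to be at least $1$ and thus pin $\min(r,b)$ to be exactly $1$; the same validity constraint also plays the auxiliary role of ruling out the "bad" coloring type of $K_4$ in the $d(v) = 4$ case.
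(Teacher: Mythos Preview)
Your proof is correct and follows essentially the same approach as the paper's: fix a minimum-degree vertex $v$, start from a $(3,3)$-free coloring of $G-v$, and in each of the two degree cases use $G\to(3,3)$ to constrain the coloring on $G(v)$ (forcing both colors to appear on the triangle $H[Y]$ when $d(v)=5$, and forcing the ``opposite edges have equal color''/$C_4$+matching structure when $d(v)=4$) so that a suitable extension to $v$ creates exactly one monochromatic triangle. Your $N(X,\alpha)$ bookkeeping and the explicit classification of $(3,3)$-free colorings of $K_4$ are slightly more formal than the paper's phrasing, but the underlying argument is the same; note also that your appeal to Theorem~\ref{theorem: alpha(G(v)) leq d(v) - 3} in the $d(v)=5$ case is harmless but redundant, since $r\geq 1$ and $b\geq 1$ already force $H[Y]$ to have at least two edges.
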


\begin{proof}
Let $v \in \V(G)$ and $d(v) \leq 5$. Consider a 2-coloring of $\E(G-v)$ without monochromatic triangles. We will color the edges incident to $v$ with two colors in such a way that we will obtain a 2-coloring of $\E(G)$ with exactly one monochromatic triangle. To achieve this, we consider the following two cases:

Case 1: $d(v) = 4$. By Corollary \ref{corollary: if d(v) = 4, then G(v) = K_4}, $G(v) = K_4$. Let $N_v = \{a, b, c, d\}$ and suppose that $[a, b]$ is colored with the first color. Then, $[c, d]$ is also colored with the first color (otherwise, by coloring $[v, a]$ and $[v, b]$ with the second color and $[v, c]$ and $[v, d]$ with the fist color, we obtain a 2-coloring of $\E(G)$ without monochromatic triangles). Thus, $[a, b]$ and $[c, d]$ are colored in the first color. We color $[v, a]$ and $[v, b]$ with the first color and $[v, c]$ and $[v, d]$ with the second color. We obtain a 2-coloring of $\E(G)$ with exactly one monochromatic triangle $[v, a, b]$.

Case 2: $d(v) = 5$. Since $\omega(G) \leq 5$, in $N_G(v)$ there are two non-adjacent vertices $a$ and $b$. From $G \rightarrow (3, 3)$ it follows easily that in $G(v) - \{a, b\}$ there is an edge of the first color and an edge of the second color. Therefore, we can suppose that in $G(v) - \{a, b\}$ there is exactly one edge of one of the colors, say the first color. We color $[v, a]$ and $[v, b]$ with the second color and the other three edges incident to $v$ with the first color. We obtain a 2-coloring of $\E(G)$ with exactly one monochromatic triangle.
\end{proof}

In the end, also in support to the hypothesis, we shall note that $M(K_3 + C_{2r + 1}) = 1, \ r \geq 2$ \cite{NK79}.

\subsection{Automorphism groups}

Denote by $Aut(G)$ the automorphism group of the graph $G$. We use the \emph{nauty} programs \cite{MP13} to find the number of automorphisms of the obtained minimal $(3, 3)$-Ramsey graphs with 10, 11, 12 and 13 vertices. Most of the obtained graphs have small automorphism groups (see Table \ref{table: 10-vertex graphs properties}, Table \ref{table: 11-vertex graphs properties}, Table \ref{table: 12-vertex graphs properties} and Table \ref{table: 13-vertex graphs properties}). We list the graphs with at least 60 automorphisms:

- The graphs in the form $K_3+C_{2r+1}$: $\abs{Aut(K_3+C_5)} = 60$. $\abs{Aut(K_3+C_7)} = 84$, $\abs{Aut(K_3+C_9)} = 108$;

- $\abs{Aut(G_{12.2240})} = 96$ (see Figure \ref{figure: 12_aut});

- $\abs{Aut(G_{13.255653})} = 144$, $\abs{Aut(G_{13.248305})} = 96$, $\abs{Aut(G_{13.304826})} = 96$, $\abs{Aut(G_{13.113198})} = 72$, $\abs{Aut(G_{13.175639})} = 72$, $\abs{Aut(G_{13.302168})} = 72$ (see Figure \ref{figure: 13_aut});

\section{Upper bounds on the independence number of the\\ minimal $(3, 3)$-Ramsey graphs}

\begin{figure}
	\captionsetup{justification=centering}
	\vspace{-4em}
	\begin{minipage}[t]{.35\textwidth}
		\centering
		\includegraphics[trim={0 0 0 490},clip,height=105px,width=105px]{./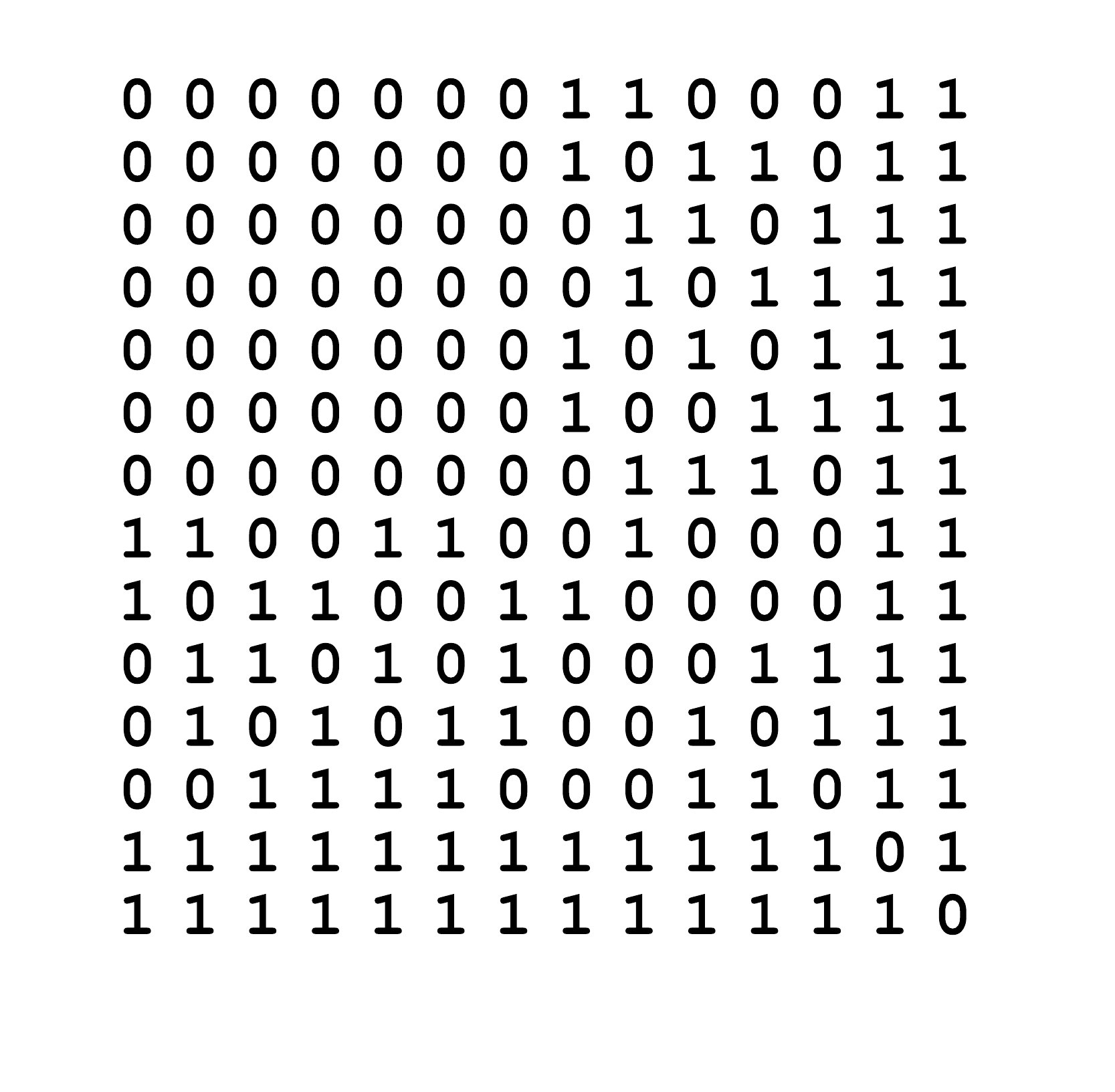}
		\caption{14-vertex minimal\\ $(3, 3)$-Ramsey graph\\ with independence number 7}
		\label{figure: 14_a7}
	\end{minipage}\hfill
	\begin{minipage}[t]{.65\textwidth}
		\centering
		\includegraphics[trim={0 0 0 490},clip,height=195px,width=195px]{./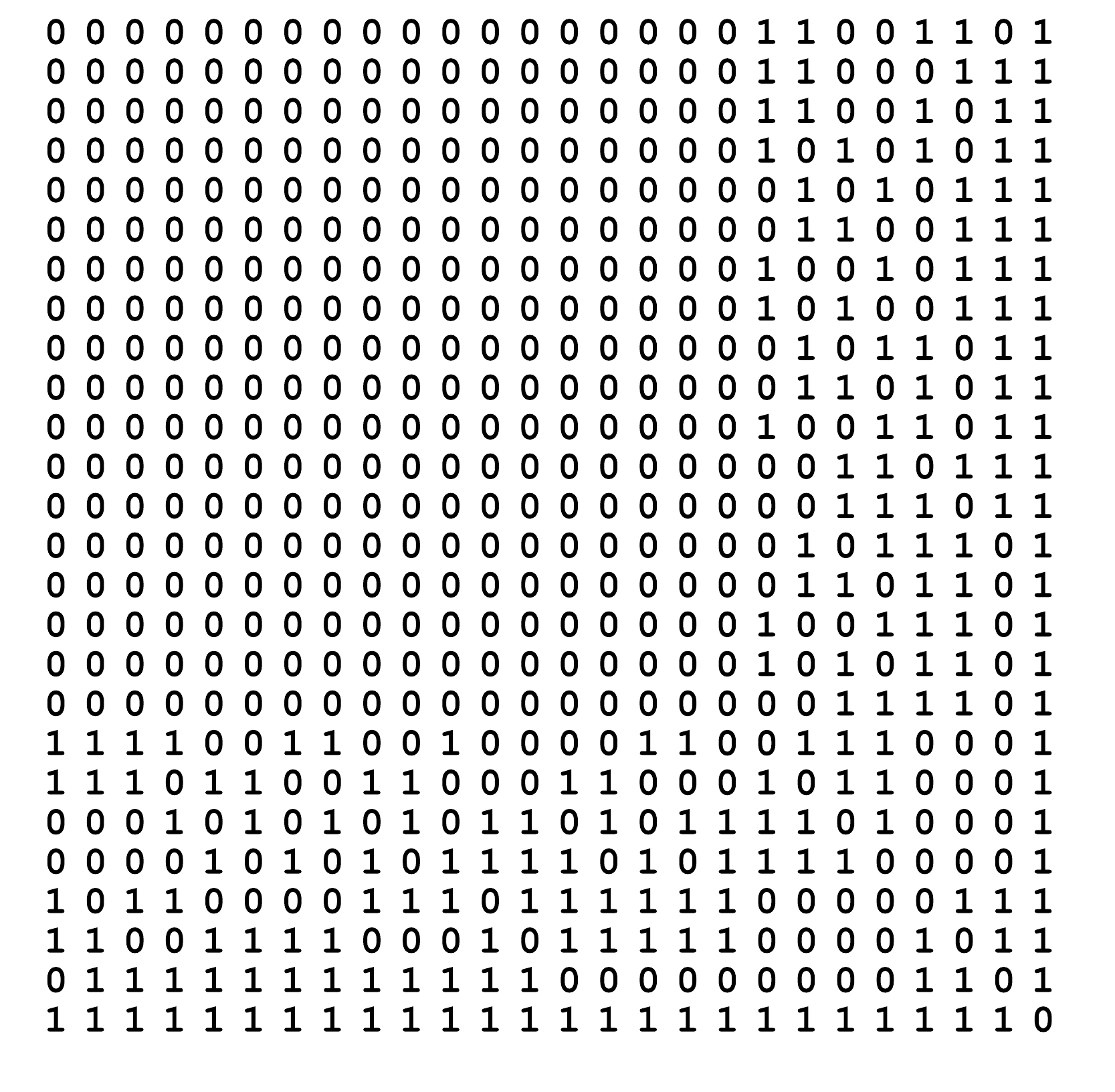}
		\caption{26-vertex minimal\\ $(3, 3)$-Ramsey graph\\ with independence number 18}
		\label{figure: 26_a18}
	\end{minipage}

	\vspace{2em}
	\centering
	\includegraphics[trim={0 0 0 490},clip,height=217.5px,width=217.5px]{./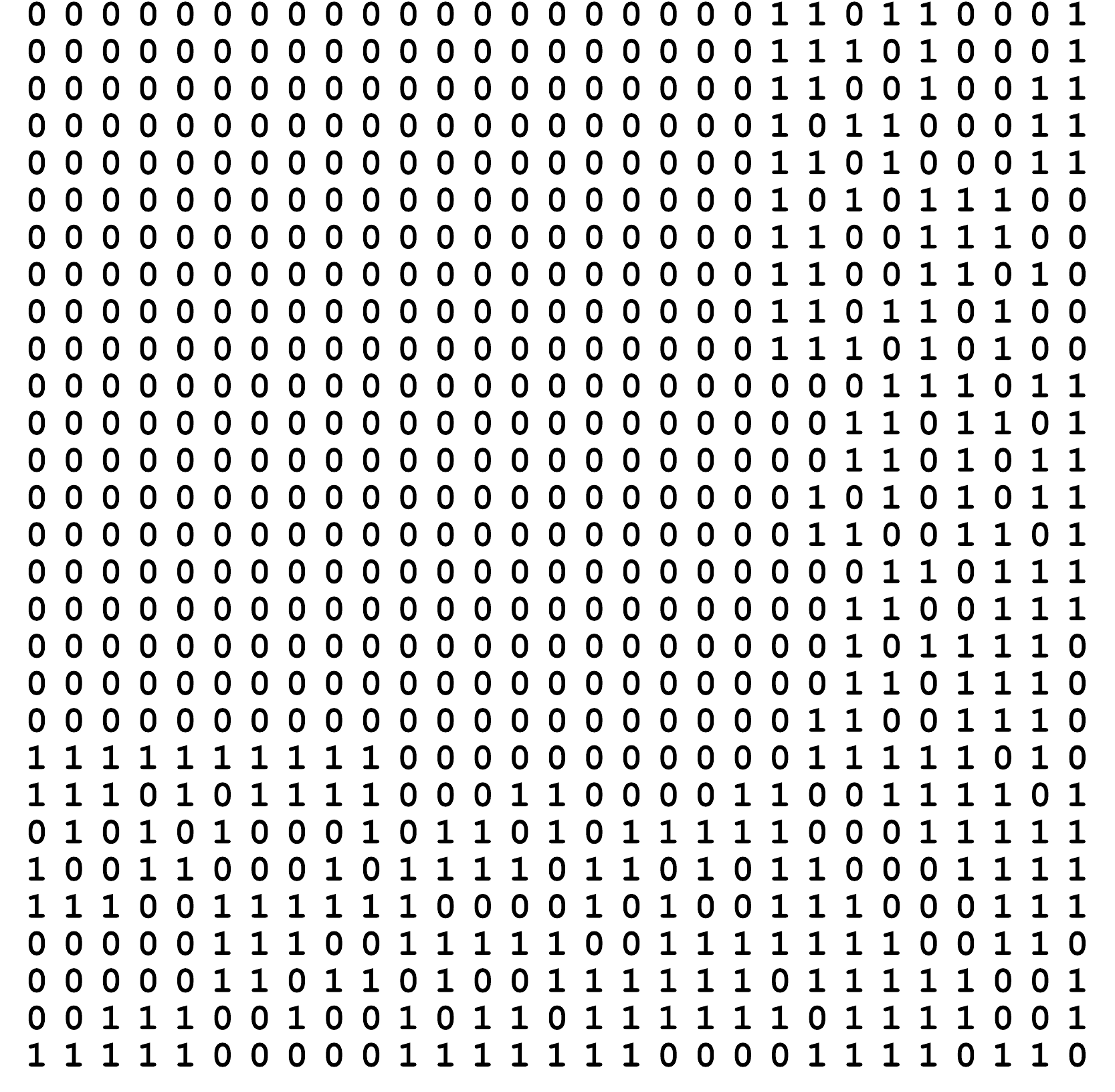}
	\caption{29-vertex minimal $(3, 3)$-Ramsey graph\\ with clique number 4 and independence number 20}
	\label{figure: 29_w4_a20}
\end{figure}

In regard to the maximal possible value of the independence number of the minimal $(3, 3)$-Ramsey graphs, the following theorem holds:

\begin{theorem}
\label{theorem: alpha(G) leq abs(V(G)) - 7}
\cite{Nen80b}
If $G$ is a minimal $(3, 3)$-Ramsey graph, $G \neq K_6$ and $G \neq K_3+C_5$, then $\alpha(G) \leq |V(G)| - 7$. There is a finite number of graphs for which equality is reached.
\end{theorem}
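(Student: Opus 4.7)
The plan is to let $A \subseteq V(G)$ be a maximum independent set of $G$ and study $H := G - A$, so that $|V(H)| = |V(G)| - \alpha(G)$; the desired inequality $\alpha(G) \leq |V(G)| - 7$ is then equivalent to $|V(H)| \geq 7$. Since $G \neq K_6$ is a minimal $(3,3)$-Ramsey graph we have $\omega(G) \leq 5$ (otherwise $K_6 \subsetneq G$ would be a proper $(3,3)$-Ramsey subgraph, contradicting minimality), hence $\omega(H) \leq 5$; and Corollary~\ref{corollary: chi(H) geq 5} gives $\chi(H) \geq 5$, so already $|V(H)| \geq 5$. The work is then to rule out $|V(H)| \in \{5,6\}$ apart from the excluded graphs, and to bound $|A|$ when $|V(H)| = 7$.

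The case $|V(H)| = 5$ is straightforward: $\chi(H) = 5$ forces $H = K_5$, whose only $(3,3)$-free $2$-coloring up to color-swap and automorphism is the Greenwood--Gleason $C_5/C_5$ coloring. Since the maximum independent set in $C_5$ has size $2$, every $M \subseteq V(K_5)$ with $|M| \leq 4$ admits a partition $M = R \cup B$ with $|R|,|B| \leq 2$ such that $R$ is independent in the red $C_5$ and $B$ in the blue $C_5$ (a quick case check by the size of $|M|$); the only marked subset (Definition~\ref{definition: marked vertex set}) is therefore $V(K_5)$ itself. By Proposition~\ref{proposition: if G is a minimal (3, 3)-Ramsey graph, then set(N_G(v_1), ..., N_G(v_s)) is a complete family of marked vertex sets}, each $N_G(v_i)$ for $v_i \in A$ is marked, so $N_G(v_i) = V(H)$ for every $i$. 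If $|A| \geq 2$, two vertices of $A$ have identical neighborhoods, violating Proposition~\ref{proposition: minimal (p, q)-Ramsey graph is not Sperner}; if $|A| = 1$, then $G = K_6$, which is excluded.

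The case $|V(H)| = 6$ is the core of the argument. Since $\chi(H) \geq 5$ on at most $8$ vertices, Dirac's theorem ($K_k$ is the only $k$-critical graph on at most $2k-2$ vertices) gives $K_5 \subseteq H$; combined with $\omega(H) \leq 5$, this forces $H = K_5 \cup \{w\}$ where $w$ is attached to some $S \subseteq V(K_5)$ with $|S| \in \{0,1,2,3,4\}$. I would treat each value of $|S|$ in turn (fixing $S$ up to $\operatorname{Aut}(K_5)$), enumerate all $(3,3)$-free $2$-colorings of $E(H)$ (each restricts to one of the $12$ labeled $C_5/C_5$ colorings on $K_5$, followed by a constrained coloring of the edges from $w$ to $S$ avoiding monochromatic triangles), and then determine the family $\mathcal{M}(H)$ of marked subsets satisfying the non-Sperner condition $M \not\subseteq N_H(v)$ for every $v \in V(H)$. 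The minimal complete antichain subfamilies of $\mathcal{M}(H)$ — in the sense of Algorithm~\ref{algorithm: finding minimal (3, 3)-Ramsey graphs G for which alpha(G) geq abs(V(G)) - k geq 1} — are then enumerated and the corresponding graphs $G$ checked for minimality. The conclusion is that the only such $G$ produced is $G = K_3 + C_5$, corresponding to $|S| = 3$ and $|A| = 2$ with $H = K_6 - \{wa, wb\}$ where $\{a,b\} = V(K_5) \setminus S$.

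Finally, for the finiteness of the equality case $\alpha(G) = |V(G)| - 7$, we then have $|V(H)| = 7$. By Proposition~\ref{proposition: minimal (p, q)-Ramsey graph is not Sperner} the family $\{N_G(v_i) : v_i \in A\}$ is an antichain in $2^{V(H)}$, so by Sperner's theorem $|A| \leq \binom{7}{3} = 35$, giving $|V(G)| \leq 42$; combined with the finite number of $7$-vertex graphs $H$ satisfying $\chi(H) \geq 5$ and $\omega(H) \leq 5$, this yields finiteness of the extremal class. The principal obstacle is the $|V(H)| = 6$ case: the five subcases indexed by $|S|$ each demand a careful enumeration of colorings, marked sets, and minimal complete antichain families, and the most delicate point is to verify that no minimal $(3,3)$-Ramsey graph other than $K_3 + C_5$ emerges. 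This step is naturally suited to the computer framework of Section~3, but is also tractable by hand thanks to the highly constrained $C_5/C_5$ structure of $K_5$.
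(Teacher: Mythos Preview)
The paper does not contain a proof of this theorem: it is quoted from \cite{Nen80b} and used as a black box to justify the parameter choice $k=8$ in Algorithm~\ref{algorithm: finding minimal (3, 3)-Ramsey graphs G for which alpha(G) geq abs(V(G)) - k geq 1}. There is therefore nothing in the present paper to compare your argument against.

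That said, your strategy is the natural one and is essentially the approach of Nenov's original paper: pass to $H = G - A$ with $A$ a maximum independent set, use $\chi(H)\ge 5$ and $\omega(H)\le 5$ to force $|V(H)|\ge 5$, and then eliminate $|V(H)|\in\{5,6\}$ by hand. Your treatment of $|V(H)|=5$ is correct, and the reduction of $|V(H)|=6$ to the five shapes $K_5$ plus a pendant vertex of degree $|S|\in\{0,\dots,4\}$ via Dirac's theorem on small $5$-chromatic graphs is valid. The Sperner-antichain bound for finiteness when $|V(H)|=7$ is also fine (and much cruder than the actual count of $11$ graphs found later in Theorem~\ref{theorem: (3, 3)-Ramsey graphs G for which alpha(G) = abs(V(G)) - 7}, but finiteness is all that is claimed). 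The only genuine gap is that the $|V(H)|=6$ case is asserted rather than carried out: you still owe the enumeration of marked sets and minimal complete families for each of the five shapes, and the verification that every resulting $G$ either fails minimality or equals $K_3+C_5$. This is exactly the content of Nenov's argument and is where the work lies; your outline is correct but not yet a proof.
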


From Theorem \ref{theorem: alpha(G) leq abs(V(G)) - 7} it follows that by executing Algorithm \ref{algorithm: finding minimal (3, 3)-Ramsey graphs G for which alpha(G) geq abs(V(G)) - k geq 1}($q = 6; k = 8$) we obtain all minimal $(3, 3)$-Ramsey graphs $G$ for which $\alpha(G) = \abs{\V(G)} - 7$ or $\alpha(G) = \abs{\V(G)} - 8$. As a result of the execution of this algorithm we derive the following additions to Theorem \ref{theorem: alpha(G) leq abs(V(G)) - 7}:

\begin{theorem}
\label{theorem: (3, 3)-Ramsey graphs G for which alpha(G) = abs(V(G)) - 7}
There are exactly 11 minimal $(3, 3)$-Ramsey graphs $G$, for which $\alpha(G) = \abs{\V(G)} - 7$:

-  9-vertex: 1 (Figure \ref{figure: Nenov_9});

- 10-vertex: 3 ($G_{10.1}$, $G_{10.2}$, $G_{10.4}$, see Figure \ref{figure: 10});

- 11-vertex: 3 ($G_{11.1}$, $G_{11.2}$, $G_{11.21}$, see Figure \ref{figure: 11_a4});

- 12-vertex: 1 ($G_{12.163}$, see Figure \ref{figure: 12_a5});

- 13-vertex: 2 ($G_{13.}$, $G_{13.}$, see Figure \ref{figure: 13_a6}) ;

- 14-vertex: 1 (see Figure \ref{figure: 14_a7});
\end{theorem}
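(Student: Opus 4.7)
The plan is to appeal to Theorem~\ref{theorem: alpha(G) leq abs(V(G)) - 7}, which already guarantees that the family in question is finite, and then enumerate it exhaustively by invoking Algorithm~\ref{algorithm: finding minimal (3, 3)-Ramsey graphs G for which alpha(G) geq abs(V(G)) - k geq 1} with parameters $q = 6$ and $k = 8$. By Theorem~\ref{theorem: finding minimal (3, 3)-Ramsey graphs G for which alpha(G) geq abs(V(G)) - k geq 1}, the output $\mathcal{B}$ of this run is precisely the set of minimal $(3,3)$-Ramsey graphs $G$ with $\omega(G) < 6$ and $\alpha(G) \geq \abs{\V(G)} - 8 \geq 1$. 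The clique restriction automatically excludes $K_6$, and $K_3 + C_5$ satisfies $\alpha = \abs{\V(G)} - 6$, so the only two graphs exempted by Theorem~\ref{theorem: alpha(G) leq abs(V(G)) - 7} never satisfy the target equality. Filtering $\mathcal{B}$ by the condition $\alpha(G) = \abs{\V(G)} - 7$ therefore produces exactly the claimed list, which can be compared with the enumerations of Sections~4 and~5 for $\abs{\V(G)} \leq 13$ and with the $14$-vertex example of Figure~\ref{figure: 14_a7}.

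The concrete work decomposes as follows. First, in step~1 of the algorithm, generate all $8$-vertex graphs $H$ with $\omega(H) \leq 5$ and $\chi(H) \geq 5$. For each such $H$, compute in step~2.1 the family $\mathcal{M}(H)$ of $K_5$-free, non-dominated, marked subsets of $\V(H)$; then in step~2.2 search for minimal complete subfamilies $\{M_{i_1}, \dots, M_{i_s}\} \subseteq \mathcal{M}(H)$ and assemble the associated graphs $G$ by attaching $s$ independent vertices with those prescribed neighborhoods. After the isomorph reduction of step~3 and the removal of non-minimal Ramsey graphs in step~4, apply the final filter $\alpha(G) = \abs{\V(G)} - 7$ and match the survivors against the canonical \emph{nauty} labels used throughout the paper, identifying them as $G_{10.1}$, $G_{10.2}$, $G_{10.4}$, $G_{11.1}$, $G_{11.2}$, $G_{11.21}$, $G_{12.163}$, the two $13$-vertex graphs, the $9$-vertex Nenov graph, and the $14$-vertex graph of Figure~\ref{figure: 14_a7}.

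The main obstacle is purely computational rather than conceptual. Deciding whether $M \subseteq \V(H)$ is marked (Definition~\ref{definition: marked vertex set}) requires scanning a potentially large set of $(3,3)$-free $2$-edge-colorings of $H$ and testing extendibility for each candidate $M$, while the minimal-complete-subfamily search (Definition~\ref{definition: complete family of marked vertex sets}) is a set-cover-type enumeration over $\mathcal{M}(H)$ whose naive execution is exponential in $\abs{\mathcal{M}(H)}$. Efficient implementation requires backtracking that prunes as soon as a monochromatic triangle is forced, compressed storage of the partial colorings considered so far, and exploitation of $\mathrm{Aut}(H)$ to quotient out symmetry; these are exactly the ingredients that make Algorithm~\ref{algorithm: finding minimal (3, 3)-Ramsey graphs G for which alpha(G) geq abs(V(G)) - k geq 1} tractable at $k = 8$. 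Beyond the correctness of Algorithm~\ref{algorithm: finding minimal (3, 3)-Ramsey graphs G for which alpha(G) geq abs(V(G)) - k geq 1} (established in Theorem~\ref{theorem: finding minimal (3, 3)-Ramsey graphs G for which alpha(G) geq abs(V(G)) - k geq 1}) and the finiteness provided by Theorem~\ref{theorem: alpha(G) leq abs(V(G)) - 7}, no further mathematical argument is required.
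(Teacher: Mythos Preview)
Your proposal is correct and follows exactly the approach the paper uses: the text preceding Theorem~\ref{theorem: (3, 3)-Ramsey graphs G for which alpha(G) = abs(V(G)) - 7} invokes Theorem~\ref{theorem: alpha(G) leq abs(V(G)) - 7} and then runs Algorithm~\ref{algorithm: finding minimal (3, 3)-Ramsey graphs G for which alpha(G) geq abs(V(G)) - k geq 1} with $q=6$, $k=8$ (so as to obtain Theorems~\ref{theorem: (3, 3)-Ramsey graphs G for which alpha(G) = abs(V(G)) - 7} and~\ref{theorem: (3, 3)-Ramsey graphs G for which alpha(G) = abs(V(G)) - 8} simultaneously), relying on Theorem~\ref{theorem: finding minimal (3, 3)-Ramsey graphs G for which alpha(G) geq abs(V(G)) - k geq 1} for correctness. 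Your write-up simply makes explicit the filtering step and the handling of $K_6$ and $K_3+C_5$, which the paper leaves implicit.
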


\begin{theorem}
\label{theorem: (3, 3)-Ramsey graphs G for which alpha(G) = abs(V(G)) - 8}
There are exactly 8633 minimal $(3, 3)$-Ramsey graphs $G$ for which $\alpha(G) = \abs{\V(G)} - 8$. The largest of these graphs has 26 vertices, and it is given on Figure \ref{figure: 26_a18}. There is only one minimal $(3, 3)$-Ramsey graph $G$ for which $\alpha(G) = \abs{\V(G)} - 8$ and $\omega(G) < 5$, and it is the 15-vertex graph $K_1 + \Gamma$ from \cite{Nen81a} (see Figure \ref{figure: Nenov_14}).
\end{theorem}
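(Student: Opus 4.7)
The plan is to apply Algorithm \ref{algorithm: finding minimal (3, 3)-Ramsey graphs G for which alpha(G) geq abs(V(G)) - k geq 1} with $q=6$ and $k=8$, which by Theorem \ref{theorem: finding minimal (3, 3)-Ramsey graphs G for which alpha(G) geq abs(V(G)) - k geq 1} returns exactly all minimal $(3,3)$-Ramsey graphs $G$ with $\alpha(G) \geq |V(G)|-8 \geq 1$. By Theorem \ref{theorem: alpha(G) leq abs(V(G)) - 7}, only finitely many such graphs exist, so the procedure terminates. Separating its output according to whether $\alpha(G) = |V(G)| - 7$ or $\alpha(G) = |V(G)| - 8$, and subtracting the $11$ graphs of the first type already listed in Theorem \ref{theorem: (3, 3)-Ramsey graphs G for which alpha(G) = abs(V(G)) - 7}, should leave exactly $8633$ graphs of the second type.

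Concretely, I would first enumerate the set $\mathcal{A}$ of $8$-vertex graphs $H$ with $\omega(H) < 6$ and $\chi(H) \geq 5$ (using \emph{nauty} to avoid isomorphs and the condition $\chi(H) \geq 5$ justified by Corollary \ref{corollary: chi(H) geq 5}). For every $H \in \mathcal{A}$, I would compute $\mathcal{M}(H)$: all $K_5$-free subsets of $V(H)$ that are marked but not contained in any neighborhood $N_H(v)$. The marked-ness test amounts to checking, over all $(3,3)$-free $2$-colorings of $E(H)$, whether some coloring blocks every extension of the new vertex — feasible because $|E(H)| \leq \binom{8}{2} = 28$. Then I would enumerate minimal complete subfamilies $\{M_{i_1},\dots,M_{i_s}\}\subseteq\mathcal{M}(H)$ and, for each, attach independent vertices with those neighborhoods to produce a candidate $G$. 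After isomorph rejection and a final minimality filter (removing any $G$ that already contains a proper $(3,3)$-Ramsey subgraph), I would count the surviving graphs and record their vertex numbers, which should confirm the bound $|V(G)| \leq 26$ with the unique maximizer being the graph of Figure \ref{figure: 26_a18}.

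For the final claim (uniqueness of the $\omega(G) < 5$ graph), I would rerun Algorithm \ref{algorithm: finding minimal (3, 3)-Ramsey graphs G for which alpha(G) geq abs(V(G)) - k geq 1} with $q=5$ and $k=8$. The constraint $\omega(G) < 5$ is enforced through condition (a) in step 2.1 together with $\omega(H) < 5$ for the seed. Because none of the $11$ graphs of Theorem \ref{theorem: (3, 3)-Ramsey graphs G for which alpha(G) = abs(V(G)) - 7} has $\omega < 5$ (each contains a $K_5$, as can be checked directly from their descriptions and Theorem \ref{theorem: F_e(3, 3; 5) = 15}), the entire output of this restricted run must consist of graphs with $\alpha(G) = |V(G)| - 8$. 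The expected output is a single graph on $15$ vertices, which I would then identify with $K_1 + \Gamma$ by a canonical-form comparison (via \emph{nauty}) against the graph of Figure \ref{figure: Nenov_14} with a dominating vertex adjoined.

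The main obstacle is computational: for each of the many $8$-vertex seeds $H$, the family $\mathcal{M}(H)$ can be large, and enumerating minimal complete subfamilies is essentially a minimal set-cover search over the space of $(3,3)$-free $2$-colorings of $H$, whose size grows exponentially in $|E(H)|$. The key to making the search feasible is aggressive symmetry reduction using $\mathrm{Aut}(H)$ at every stage and early pruning of colorings that already admit too many extensions, together with incremental certificate-based isomorph rejection between seeds so that the same $G$ produced from different $H$'s is counted once. Since $|V(H)|=8$ and the number of candidate marked sets is bounded by $2^8$, the search — though nontrivial — stays within practical limits, and yields the stated counts.
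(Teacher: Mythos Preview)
Your approach is essentially the paper's own: it, too, proves the theorem by executing Algorithm~\ref{algorithm: finding minimal (3, 3)-Ramsey graphs G for which alpha(G) geq abs(V(G)) - k geq 1} with $q=6$, $k=8$ (justified via Theorem~\ref{theorem: alpha(G) leq abs(V(G)) - 7} and Theorem~\ref{theorem: finding minimal (3, 3)-Ramsey graphs G for which alpha(G) geq abs(V(G)) - k geq 1}) and then reading off the count, the maximum order, and the unique $\omega<5$ example from the output. The only minor difference is that for the $\omega(G)<5$ claim you propose a separate run with $q=5$, $k=8$ rather than filtering the $q=6$ output; your justification that the eleven graphs of Theorem~\ref{theorem: (3, 3)-Ramsey graphs G for which alpha(G) = abs(V(G)) - 7} all have $\omega=5$ via Theorem~\ref{theorem: F_e(3, 3; 5) = 15} is correct, so both routes yield the same answer.
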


\begin{corollary}
Let $G$ be a minimal $(3, 3)$-Ramsey graph and $\abs{\V(G)} \geq 27$. Then, $\alpha(G) \leq \abs{\V(G)} - 9$.
\end{corollary}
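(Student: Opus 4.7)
The plan is to combine the two preceding theorems directly. Since $\abs{\V(G)} \geq 27$, the graph $G$ is certainly distinct from both $K_6$ and $K_3+C_5$, so Theorem \ref{theorem: alpha(G) leq abs(V(G)) - 7} applies and gives $\alpha(G) \leq \abs{\V(G)} - 7$. Hence the only way the claimed bound $\alpha(G) \leq \abs{\V(G)} - 9$ could fail is if $\alpha(G) = \abs{\V(G)} - 7$ or $\alpha(G) = \abs{\V(G)} - 8$.

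First I would rule out the case $\alpha(G) = \abs{\V(G)} - 7$. According to Theorem \ref{theorem: (3, 3)-Ramsey graphs G for which alpha(G) = abs(V(G)) - 7}, every minimal $(3,3)$-Ramsey graph attaining this extremal value has at most $14$ vertices (the largest example being the $14$-vertex graph of Figure \ref{figure: 14_a7}). Since $\abs{\V(G)} \geq 27 > 14$, this case cannot occur.

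Next I would rule out the case $\alpha(G) = \abs{\V(G)} - 8$. Theorem \ref{theorem: (3, 3)-Ramsey graphs G for which alpha(G) = abs(V(G)) - 8} asserts that every minimal $(3,3)$-Ramsey graph with $\alpha(G) = \abs{\V(G)} - 8$ has at most $26$ vertices (the largest example being the $26$-vertex graph of Figure \ref{figure: 26_a18}). Since $\abs{\V(G)} \geq 27 > 26$, this case is also impossible.

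Combining the three observations, the inequality $\alpha(G) \leq \abs{\V(G)} - 9$ must hold, which completes the proof. There is no real obstacle here; the corollary is immediate once the two classification theorems for the extremal values $\abs{\V(G)} - 7$ and $\abs{\V(G)} - 8$ are in hand, and all the computational work has already been carried out via Algorithm \ref{algorithm: finding minimal (3, 3)-Ramsey graphs G for which alpha(G) geq abs(V(G)) - k geq 1} with $q = 6$ and $k = 8$.
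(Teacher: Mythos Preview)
Your argument is correct and is exactly the intended one: the paper states this corollary without proof, treating it as the obvious consequence of Theorems \ref{theorem: alpha(G) leq abs(V(G)) - 7}, \ref{theorem: (3, 3)-Ramsey graphs G for which alpha(G) = abs(V(G)) - 7} and \ref{theorem: (3, 3)-Ramsey graphs G for which alpha(G) = abs(V(G)) - 8}, which together rule out $\alpha(G)=\abs{\V(G)}-7$ and $\alpha(G)=\abs{\V(G)}-8$ for graphs on at least $27$ vertices.
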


According to Theorem \ref{theorem: (3, 3)-Ramsey graphs G for which alpha(G) = abs(V(G)) - 8}, if $G$ is a minimal $(3, 3)$-Ramsey graph, $\omega(G) < 5$, and $G \neq K_1 + \Gamma$, then $\alpha(G) \leq \abs{\V(G)} - 9$. From Theorem \ref{theorem: F_e(3, 3; 5) = 15} it follows that by executing Algorithm \ref{algorithm: finding minimal (3, 3)-Ramsey graphs G for which alpha(G) geq abs(V(G)) - k geq 1}($q = 5; k = 9$) we obtain all minimal $(3, 3)$-Ramsey graphs $G$ for which $\omega(G) < 5$ and $\alpha(G) = \abs{\V(G)} - 9$, and the graph $K_1 + \Gamma$. As a result of the execution of this algorithm we derive:

\begin{theorem}
\label{theorem: (3, 3)-Ramsey graphs G for which omega (G) < 5 and alpha(G) = abs(V(G)) - 9}
There are exactly 8903 minimal $(3, 3)$-Ramsey graphs $G$ for which $\omega(G) < 5$ and $\alpha(G) = \abs{\V(G)} - 9$. The largest of these graphs has 29 vertices, and it is given на Figure \ref{figure: 29_w4_a20}.
\end{theorem}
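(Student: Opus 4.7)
The plan is to invoke Algorithm \ref{algorithm: finding minimal (3, 3)-Ramsey graphs G for which alpha(G) geq abs(V(G)) - k geq 1} with parameters $q = 5$ and $k = 9$. By Theorem \ref{theorem: finding minimal (3, 3)-Ramsey graphs G for which alpha(G) geq abs(V(G)) - k geq 1}, this yields a set $\mathcal{B}$ consisting of precisely the minimal $(3, 3)$-Ramsey graphs $G$ with $\omega(G) < 5$ and $\alpha(G) \geq \abs{\V(G)} - 9 \geq 1$. To isolate our target class, I would separate the elements of $\mathcal{B}$ according to the value of $\abs{\V(G)} - \alpha(G)$. By Theorem \ref{theorem: F_e(3, 3; 5) = 15}, every minimal $(3, 3)$-Ramsey graph with at most $14$ vertices other than $K_6$ has $\omega(G) = 5$, so every $G \in \mathcal{B}$ has $\abs{\V(G)} \geq 15$; combined with the complete enumeration of graphs with $\alpha(G) = \abs{\V(G)} - 7$ in Theorem \ref{theorem: (3, 3)-Ramsey graphs G for which alpha(G) = abs(V(G)) - 7} (all at most $14$ vertices), this shows no element of $\mathcal{B}$ satisfies $\alpha(G) \geq \abs{\V(G)} - 7$. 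By Theorem \ref{theorem: (3, 3)-Ramsey graphs G for which alpha(G) = abs(V(G)) - 8}, the unique element of $\mathcal{B}$ with $\alpha(G) = \abs{\V(G)} - 8$ is $K_1 + \Gamma$. Removing this graph from $\mathcal{B}$ leaves exactly the minimal $(3, 3)$-Ramsey graphs with $\omega(G) < 5$ and $\alpha(G) = \abs{\V(G)} - 9$.

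Concretely, I would first execute step 1 of the algorithm by using \emph{nauty} to enumerate all $9$-vertex graphs $H$ with $\omega(H) \leq 4$ and $\chi(H) \geq 5$. For each such $H$, step 2.1 requires enumerating $K_4$-free, non-Sperner subsets $M \subseteq \V(H)$ and testing for each whether it is a marked vertex set, which is decided by attempting to extend every $(3, 3)$-free $2$-coloring of $E(H)$ to a coloring of $E(H + v)$ with $N(v) = M$ via backtracking. Step 2.2 then enumerates the minimal complete subfamilies of $\mathcal{M}(H)$; each such subfamily produces one candidate graph $G$ built by attaching independent vertices. Finally, isomorphic duplicates are removed with \emph{nauty}'s canonical labelling (step 3) and non-minimal graphs are filtered out by edge-deletion checks against the $(3, 3)$-Ramsey property (step 4). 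The tallying and the size of the largest member ($29$ vertices, matching Figure \ref{figure: 29_w4_a20}) are then read off.

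The main obstacle will be the combinatorial cost of step 2.2. For a typical $9$-vertex $H$, the family $\mathcal{M}(H)$ can be large, and enumerating minimal complete subfamilies is a minimal set-cover problem over the space of $(3, 3)$-free $2$-colorings of $H$, scaling very badly in $\abs{\mathcal{M}(H)}$. The implementation must precompute, for each $M \in \mathcal{M}(H)$, the exact set of $(3, 3)$-free colorings of $H$ that $M$ blocks, and then search for minimal exact covers of the coloring space using branch-and-bound with symmetry reductions by $\mathrm{Aut}(H)$. Given that the output reaches $8903$ graphs on up to $29$ vertices, aggressive pruning (discarding $H$'s with no $(3, 3)$-free coloring, filtering dominated $M$'s, and sharing subcomputations across isomorphic restrictions of $H$) will be essential for the enumeration to terminate in reasonable time.
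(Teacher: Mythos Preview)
Your proposal is correct and follows essentially the same route as the paper: execute Algorithm~\ref{algorithm: finding minimal (3, 3)-Ramsey graphs G for which alpha(G) geq abs(V(G)) - k geq 1} with $q=5$, $k=9$, then strip out the single graph $K_1+\Gamma$ (the unique output with $\alpha(G)=\abs{\V(G)}-8$ and $\omega(G)<5$, by Theorem~\ref{theorem: (3, 3)-Ramsey graphs G for which alpha(G) = abs(V(G)) - 8}) to leave exactly the graphs with $\alpha(G)=\abs{\V(G)}-9$. The only small slip is that your exclusion of the case $\alpha(G)>\abs{\V(G)}-7$ should cite Theorem~\ref{theorem: alpha(G) leq abs(V(G)) - 7} rather than Theorem~\ref{theorem: (3, 3)-Ramsey graphs G for which alpha(G) = abs(V(G)) - 7}, since the latter only enumerates the equality case.
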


\begin{corollary}
Let $G$ be a minimal $(3, 3)$-Ramsey graph such that $\omega(G) < 5$ and $\abs{\V(G)} \geq 30$. Then, $\alpha(G) \leq \abs{\V(G)} - 10$.
\end{corollary}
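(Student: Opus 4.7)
The plan is to argue by contradiction, using as a black box the three enumeration theorems that immediately precede the corollary. Suppose $G$ is a minimal $(3,3)$-Ramsey graph with $\omega(G) < 5$, $\abs{\V(G)} \geq 30$, and (for contradiction) $\alpha(G) \geq \abs{\V(G)} - 9$. Combining this with Theorem~\ref{theorem: alpha(G) leq abs(V(G)) - 7}, we have $\abs{\V(G)} - 9 \leq \alpha(G) \leq \abs{\V(G)} - 7$, so $\alpha(G)$ is one of $\abs{\V(G)} - 7$, $\abs{\V(G)} - 8$, or $\abs{\V(G)} - 9$. I would then knock out each of these three cases in turn.

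For $\alpha(G) = \abs{\V(G)} - 7$, Theorem~\ref{theorem: (3, 3)-Ramsey graphs G for which alpha(G) = abs(V(G)) - 7} lists all $11$ such minimal $(3,3)$-Ramsey graphs, the largest of which has $14$ vertices, contradicting $\abs{\V(G)} \geq 30$. For $\alpha(G) = \abs{\V(G)} - 8$, the hypothesis $\omega(G) < 5$ together with Theorem~\ref{theorem: (3, 3)-Ramsey graphs G for which alpha(G) = abs(V(G)) - 8} forces $G = K_1 + \Gamma$, a $15$-vertex graph, again contradicting $\abs{\V(G)} \geq 30$. Finally, for $\alpha(G) = \abs{\V(G)} - 9$, Theorem~\ref{theorem: (3, 3)-Ramsey graphs G for which omega (G) < 5 and alpha(G) = abs(V(G)) - 9} tells us that every such graph has at most $29$ vertices, contradicting $\abs{\V(G)} \geq 30$.

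Since all three cases lead to contradictions, we must have $\alpha(G) \leq \abs{\V(G)} - 10$. There is no real obstacle here: the corollary is a straightforward bookkeeping consequence of the three preceding enumeration results, which do the genuine work. The only thing to be careful about is invoking the correct theorem for each value of $\alpha(G) - \abs{\V(G)}$, and in particular noting that for $\alpha(G) = \abs{\V(G)} - 8$ one must use the hypothesis $\omega(G) < 5$ to reduce to the single exceptional graph $K_1 + \Gamma$.
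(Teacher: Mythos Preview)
Your proof is correct and follows essentially the same approach as the paper. The paper states the corollary without proof, treating it as immediate from the sentence preceding Theorem~\ref{theorem: (3, 3)-Ramsey graphs G for which omega (G) < 5 and alpha(G) = abs(V(G)) - 9} (which already collapses the $\abs{\V(G)}-7$ and $\abs{\V(G)}-8$ cases under the hypothesis $\omega(G)<5$ into ``$\alpha(G)\leq\abs{\V(G)}-9$ unless $G=K_1+\Gamma$'') together with the $29$-vertex bound in that theorem; your explicit three-case breakdown simply unpacks this reasoning.
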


\section{Lower bounds on the minimum degree of the\\ minimal $(3, 3)$-Ramsey graphs}

According to Proposition \ref{proposition: marked vertex sets in (3, 3)-Ramsey graphs}, if $G$ is a minimal $(3, 3)$-Ramsey graph, then for each vertex $v$ of $G$, $N_G(v)$ is a marked vertex set in $G - v$, and therefore $N_G(v)$ is a marked vertex set in $G(v)$.

\begin{figure}[h]
	\centering
	\includegraphics[trim={0 470 0 0},clip,height=120px,width=120px]{./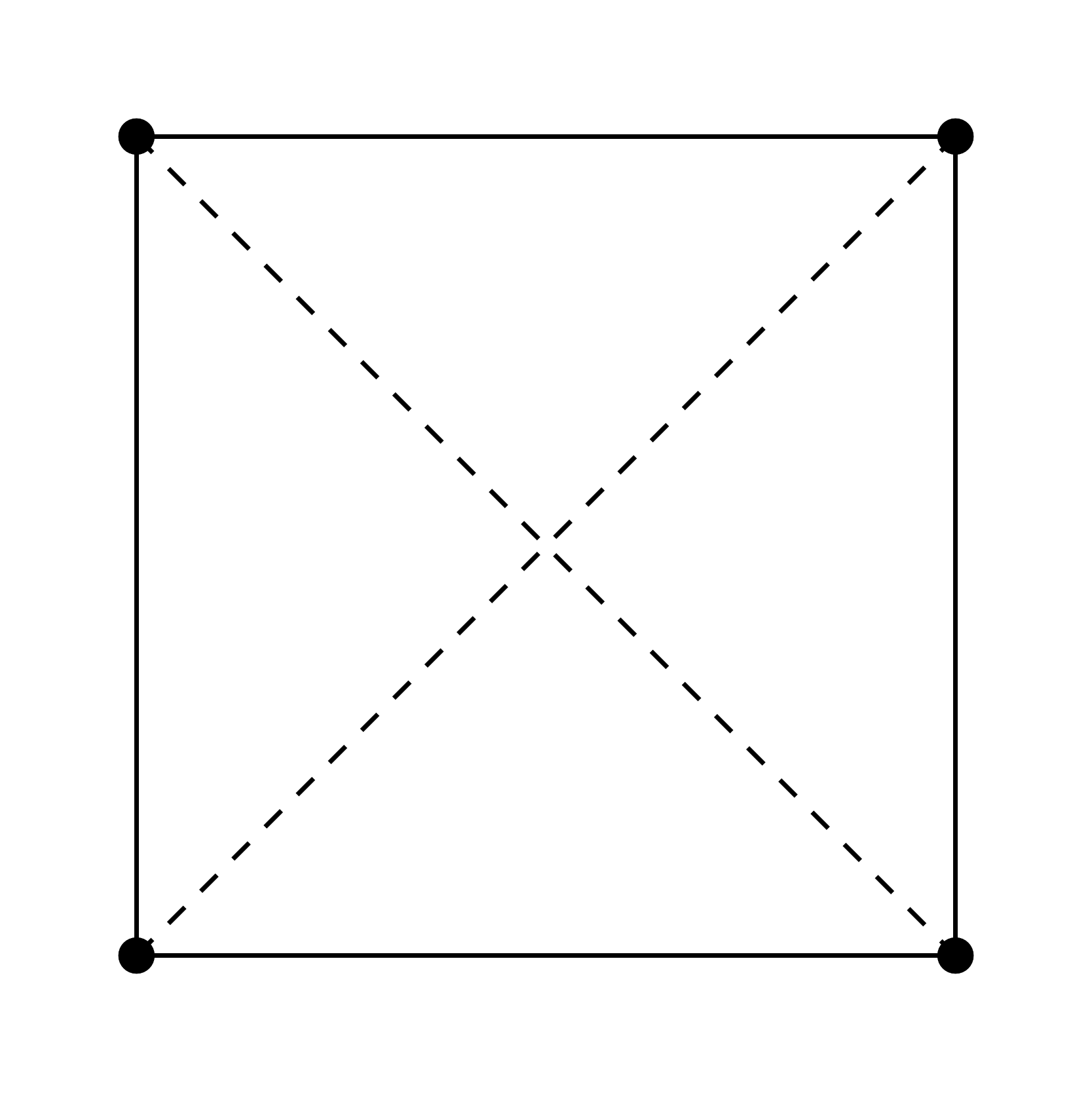}
	\caption{$(3, 3)$-free 2-coloring of the edges of $K_4$}
	\label{figure: N_4_coloring}
\end{figure}

It is easy to see that if $W \subseteq \V(G)$ and $\abs{W} \leq 3$, or $\abs{W} = 4$ and $G[W] \neq K_4$, then $W$ is not a marked vertex set in $G$. A $(3, 3)$-free 2-coloring of $K_4$ which cannot be extended to a $(3, 3)$-free 2-coloring of $K_5$ is shown on Figure \ref{figure: N_4_coloring}. Therefore, the only 4-vertex graph $N$ such that $\V(N)$ is a marked vertex set in $N$ is $K_4$.

\begin{figure}[h]
	\begin{subfigure}{.3\textwidth}
		\centering
		\includegraphics[trim={0 470 0 0},clip,height=100px,width=100px]{./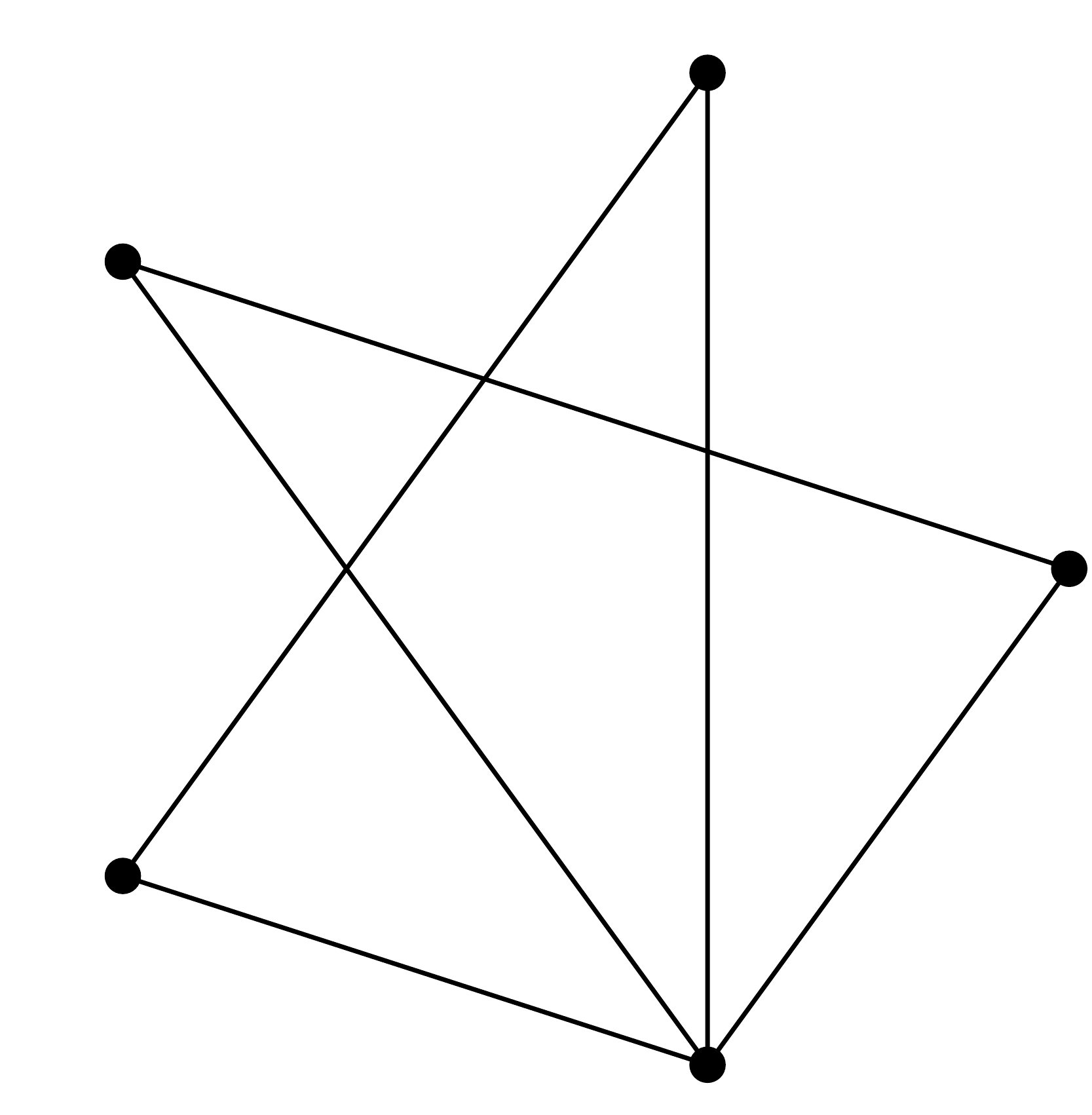}
		\caption*{$N_{5.1}$}
		\label{figure: N_5_1}
	\end{subfigure}\hfill
	\begin{subfigure}{.3\textwidth}
		\centering
		\includegraphics[trim={0 470 0 0},clip,height=100px,width=100px]{./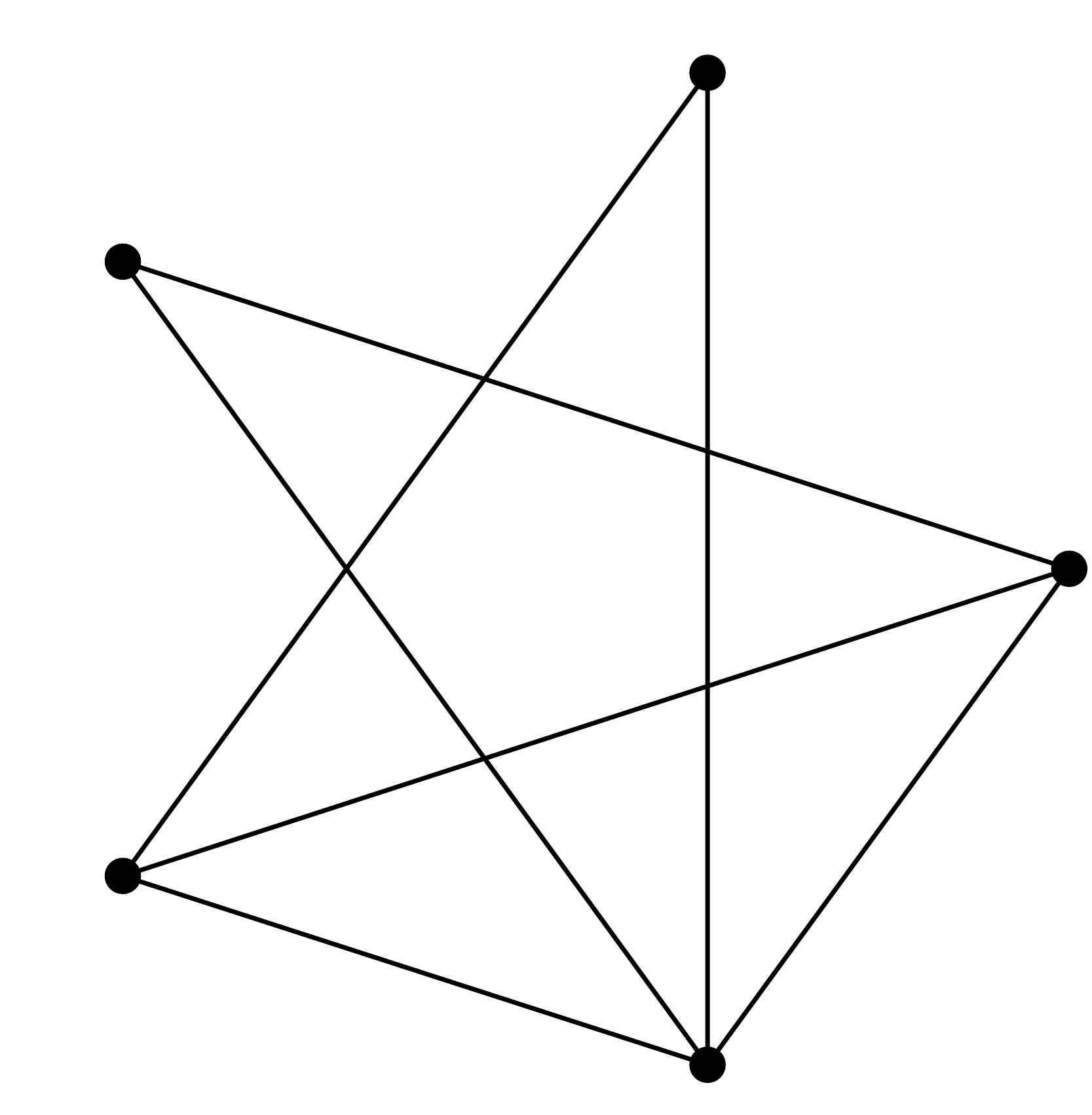}
		\caption*{$N_{5.2}$}
		\label{figure: N_5_2}
	\end{subfigure}\hfill
	\begin{subfigure}{.3\textwidth}
		\centering
		\includegraphics[trim={0 470 0 0},clip,height=100px,width=100px]{./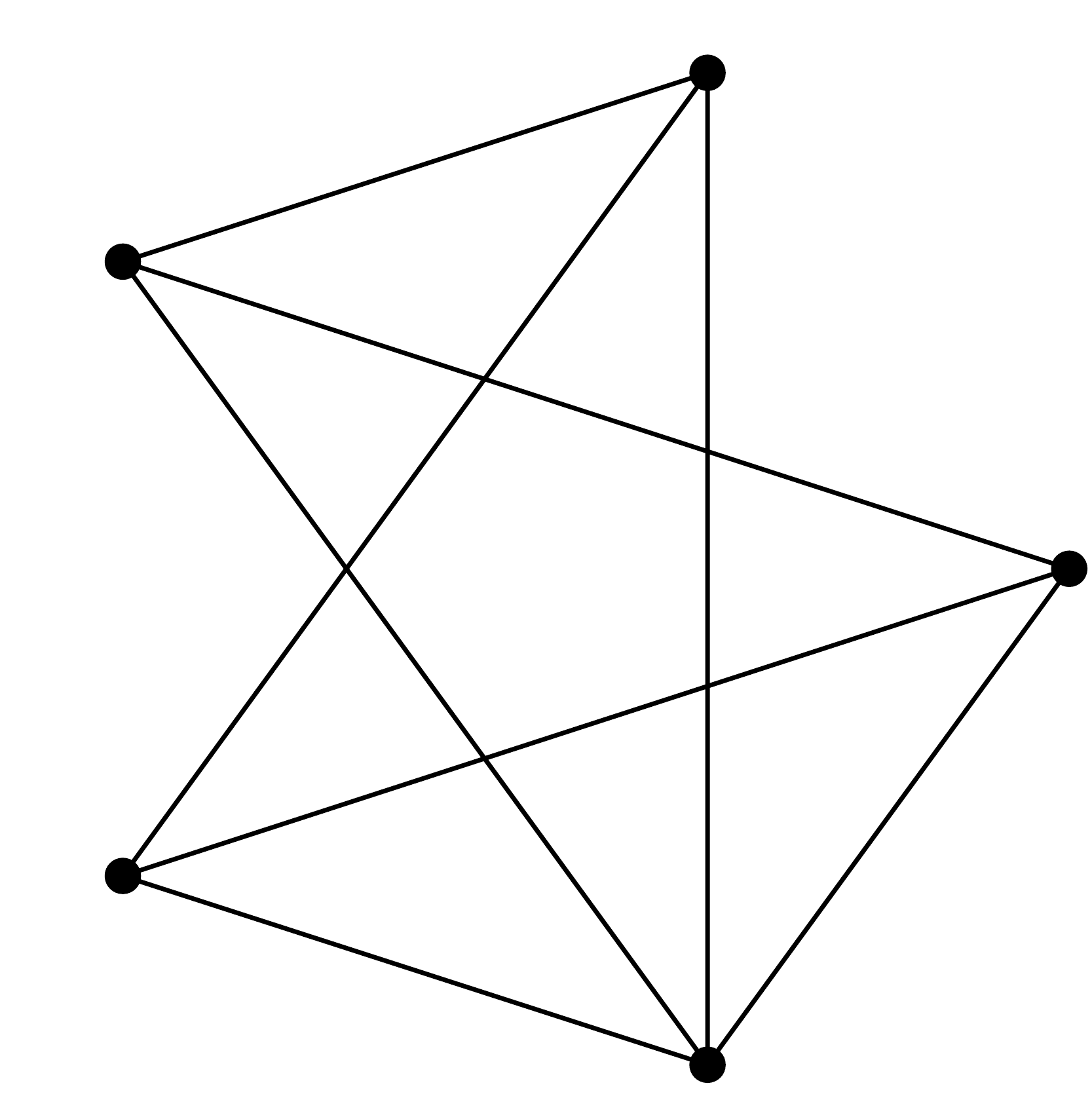}
		\caption*{$N_{5.3}$}
		\label{figure: N_5_3}
	\end{subfigure}
	\caption{The graphs $N_{5.1}$, $N_{5.2}$, $N_{5.3}$}
	\label{figure: N_5_1 N_5_2 N_5_3}
\end{figure}

With the help of a computer, we obtain that there are exactly 3 graphs $N$ with 5 vertices such that $K_4 \not\subset N$ and $\V(N)$ is a marked vertex set in $N$. Namely, they are the graphs $N_{5.1}$, $N_{5.2}$ and $N_{5.3}$ presented on Figure \ref{figure: N_5_1 N_5_2 N_5_3}. We shall note that $N_{5.1} \subset N_{5.2} \subset N_{5.3}$. From these results we derive
\begin{theorem}
\label{theorem: delta(G) geq 5}
Let $G$ be a minimal $(3, 3)$-Ramsey graph and $\omega(G) \leq 4$. Then, $\delta(G) \geq 5$. If $v \in \V(G)$ and $d(v) = 5$, then $G(v) = N_{5.i}$ for some $i \in \set{1, 2, 3}$ (see Figure \ref{figure: N_5_1 N_5_2 N_5_3}).
\end{theorem}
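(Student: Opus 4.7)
The plan is to combine Proposition~\ref{proposition: marked vertex sets in (3, 3)-Ramsey graphs} with the two observations made immediately before the theorem statement, so that the conclusion is extracted by a short case analysis on the degree of a minimum-degree vertex. Recall that Proposition~\ref{proposition: marked vertex sets in (3, 3)-Ramsey graphs} (applied to the single independent vertex $v$) guarantees that, for every $v \in \V(G)$, the set $N_G(v)$ is a marked vertex set in $G - v$; since the definition of marked vertex set only refers to edges inside $N_G(v)$, this is equivalent to $\V(G(v))$ being a marked vertex set in $G(v)$ itself.

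I would start from Theorem~\ref{theorem: delta(G) geq (p-1)^2}, which already gives $\delta(G) \geq 4$, so only the case $d(v) = 4$ needs to be excluded. Pick $v$ with $d(v) = 4$. Then $\V(G(v))$ is a marked vertex set in the $4$-vertex graph $G(v)$. By the remark preceding the theorem, the only $4$-vertex graph $N$ for which $\V(N)$ is marked in $N$ is $K_4$, hence $G(v) = K_4$. But then $\{v\} \cup N_G(v)$ induces a $K_5$ in $G$, contradicting the hypothesis $\omega(G) \leq 4$. This forces $\delta(G) \geq 5$.

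For the second assertion, suppose $d(v) = 5$. Again $\V(G(v))$ is a marked vertex set in the $5$-vertex graph $G(v)$. Since $\omega(G) \leq 4$, a $K_4$ inside $G(v)$ would produce a $K_5$ together with $v$, so $K_4 \not\subset G(v)$. The computer-assisted enumeration stated just before the theorem gives exactly three $5$-vertex graphs $N$ with $\V(N)$ marked in $N$ and $K_4 \not\subset N$, namely $N_{5.1}$, $N_{5.2}$, $N_{5.3}$ from Figure~\ref{figure: N_5_1 N_5_2 N_5_3}, so $G(v)$ must be one of these three.

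There is essentially no hard step internal to this proof: the substantive content is packed into the already-cited Proposition~\ref{proposition: marked vertex sets in (3, 3)-Ramsey graphs} and the computer enumeration of $4$- and $5$-vertex graphs whose whole vertex set is a marked set. The main care point is the reduction from ``marked in $G - v$'' to ``marked in $G(v)$,'' which I would justify by observing that in Definition~\ref{definition: marked vertex set} only the $2$-coloring restricted to $G[N_G(v)]$ matters when one tries to extend the coloring to the new vertex adjacent exactly to $N_G(v)$; edges of $G - v$ outside $N_G(v)$ are irrelevant, so a $(3, 3)$-free coloring on $G(v)$ that cannot be extended lifts (by coloring the remaining edges arbitrarily while keeping no monochromatic triangle, which is possible since we may simply take any $(3,3)$-free coloring of $G-v$ and restrict it) to certify that $N_G(v)$ is marked already in $G(v)$.
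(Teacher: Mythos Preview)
Your proposal is correct and follows exactly the derivation the paper intends: the paper states the observations about marked vertex sets in $G(v)$, the classification of $4$- and $5$-vertex graphs $N$ with $\V(N)$ marked in $N$, and then simply writes ``From these results we derive'' Theorem~\ref{theorem: delta(G) geq 5} without further argument. Your explicit case analysis (ruling out $d(v)=4$ via $G(v)=K_4\Rightarrow K_5\subseteq G$, and pinning down $G(v)$ when $d(v)=5$ via the $K_4$-free enumeration) is precisely the intended unpacking; the only quibble is that the last paragraph's wording drifts toward the converse direction, but the key observation you state---that extendability depends only on the coloring restricted to $G[N_G(v)]$---is exactly what is needed and is what the paper asserts without proof.
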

The bound $\delta(G) \geq 5$ from Theorem \ref{theorem: delta(G) geq 5} is exact. For example, the graph $G = K_1 + \Gamma$ from \cite{Nen81a} (see Figure \ref{figure: Nenov_14}) has 7 vertices $v$ such that $d(v) = 5$ and $G(v) = N_{5.3}$.

\begin{figure}[h]
	\begin{subfigure}{.25\textwidth}
		\centering
		\includegraphics[trim={0 470 0 0},clip,height=80px,width=80px]{./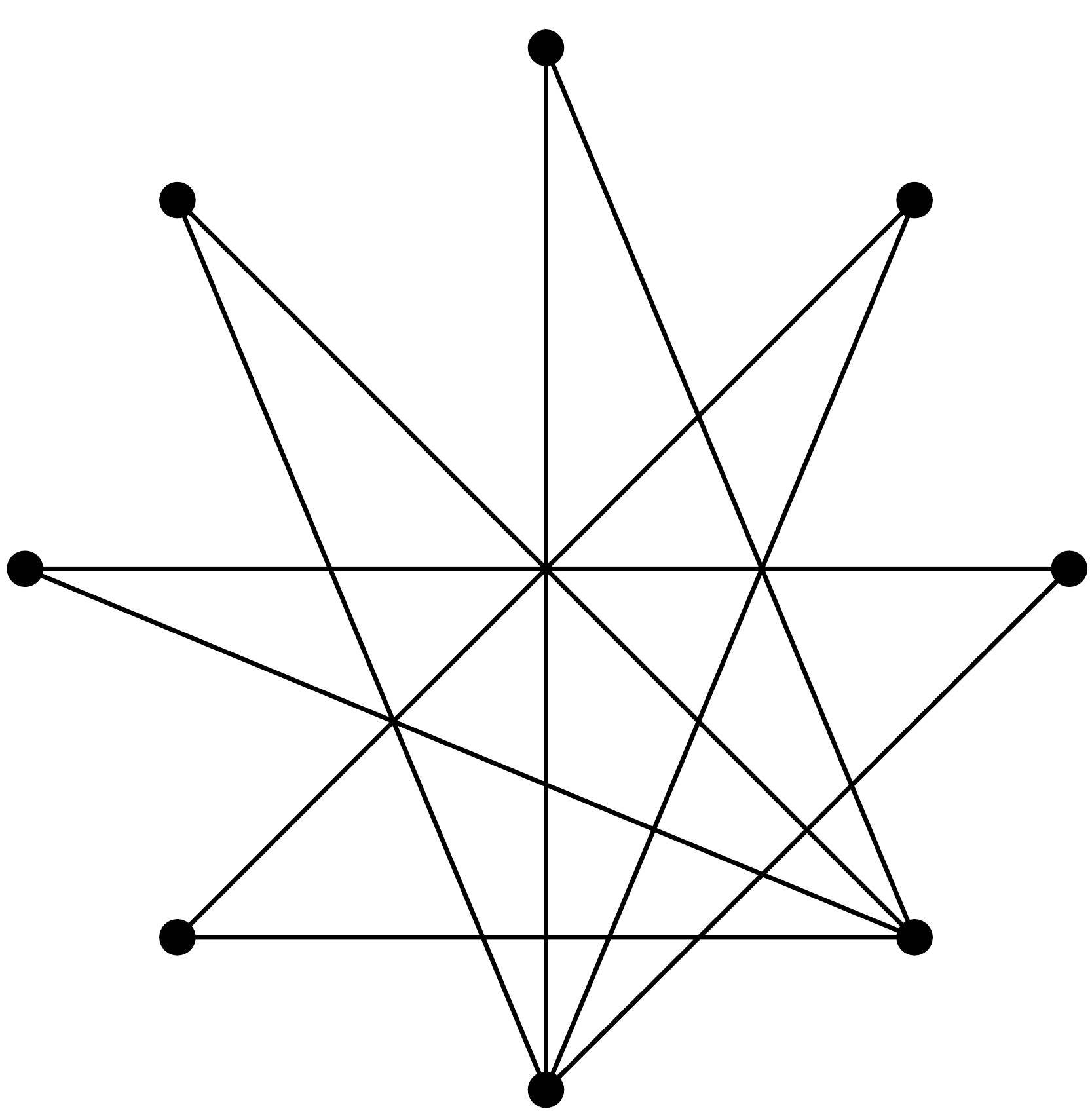}
		\caption*{$N_{8.1}$}
		\label{figure: N_8_1}
	\end{subfigure}\hfill
	\begin{subfigure}{.25\textwidth}
		\centering
		\includegraphics[trim={0 470 0 0},clip,height=80px,width=80px]{./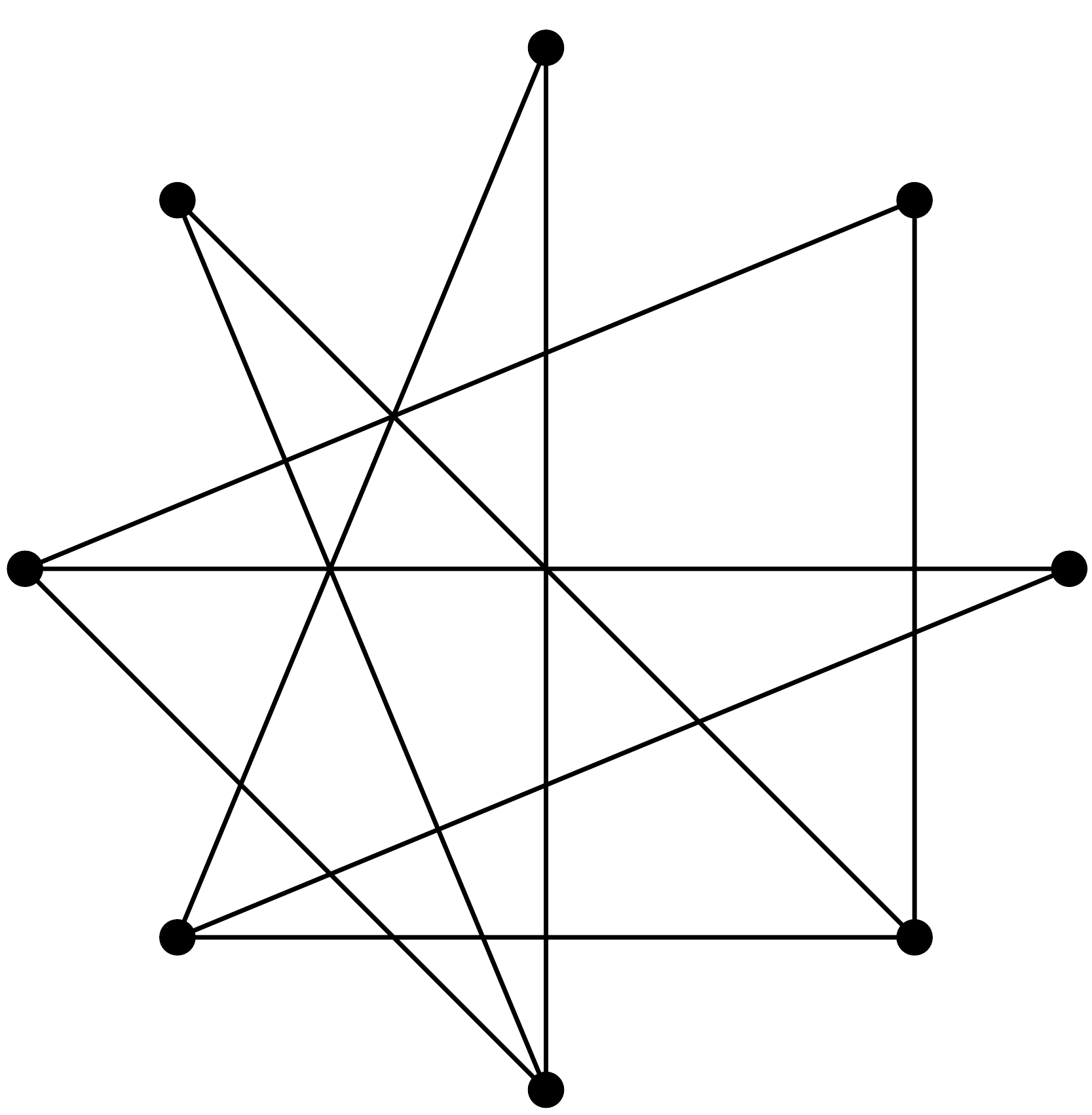}
		\caption*{$N_{8.2}$}
		\label{figure: N_8_2}
	\end{subfigure}\hfill
	\begin{subfigure}{.25\textwidth}
		\centering
		\includegraphics[trim={0 470 0 0},clip,height=80px,width=80px]{./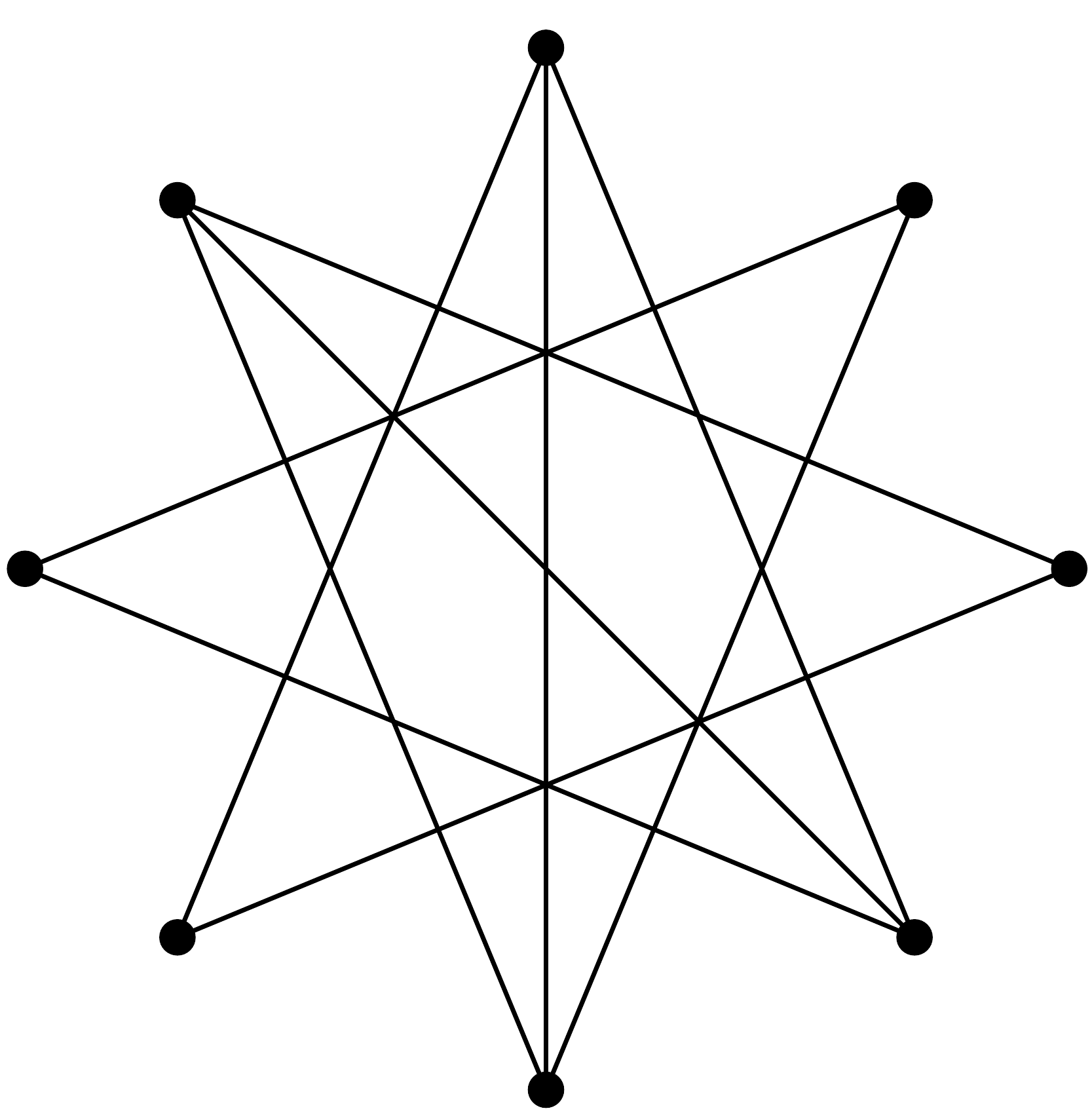}
		\caption*{$N_{8.3}$}
		\label{figure: N_8_3}
	\end{subfigure}\\
	\vspace{1em}
	\begin{subfigure}{.22\textwidth}
		\centering
		\includegraphics[trim={0 470 0 0},clip,height=75px,width=75px]{./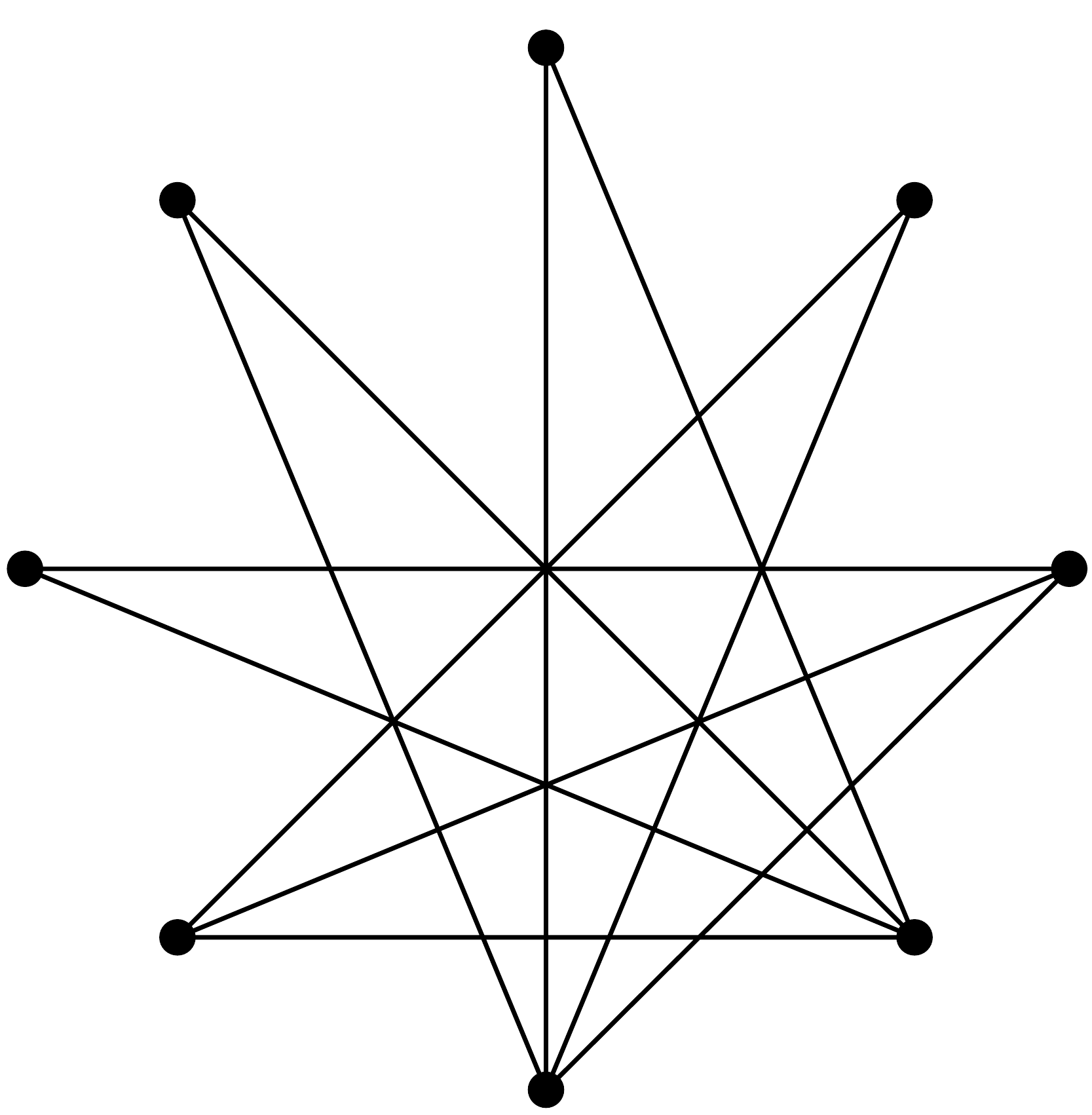}
		\caption*{$N_{8.4}$}
		\label{figure: N_8_4}
	\end{subfigure}\hfill
	\begin{subfigure}{.22\textwidth}
		\centering
		\includegraphics[trim={0 470 0 0},clip,height=75px,width=75px]{./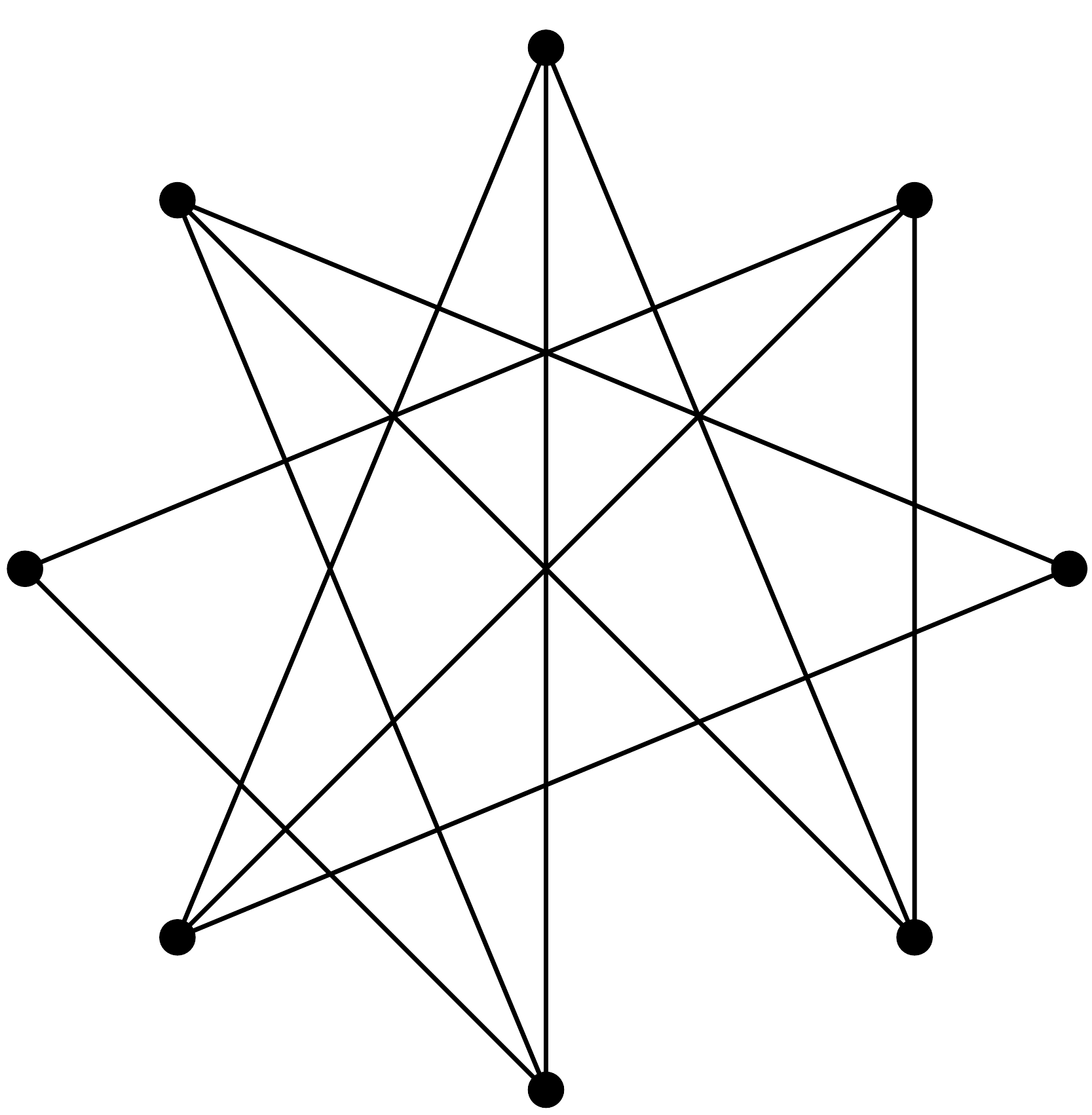}
		\caption*{$N_{8.5}$}
		\label{figure: N_8_5}
	\end{subfigure}\hfill
	\begin{subfigure}{.22\textwidth}
		\centering
		\includegraphics[trim={0 470 0 0},clip,height=75px,width=75px]{./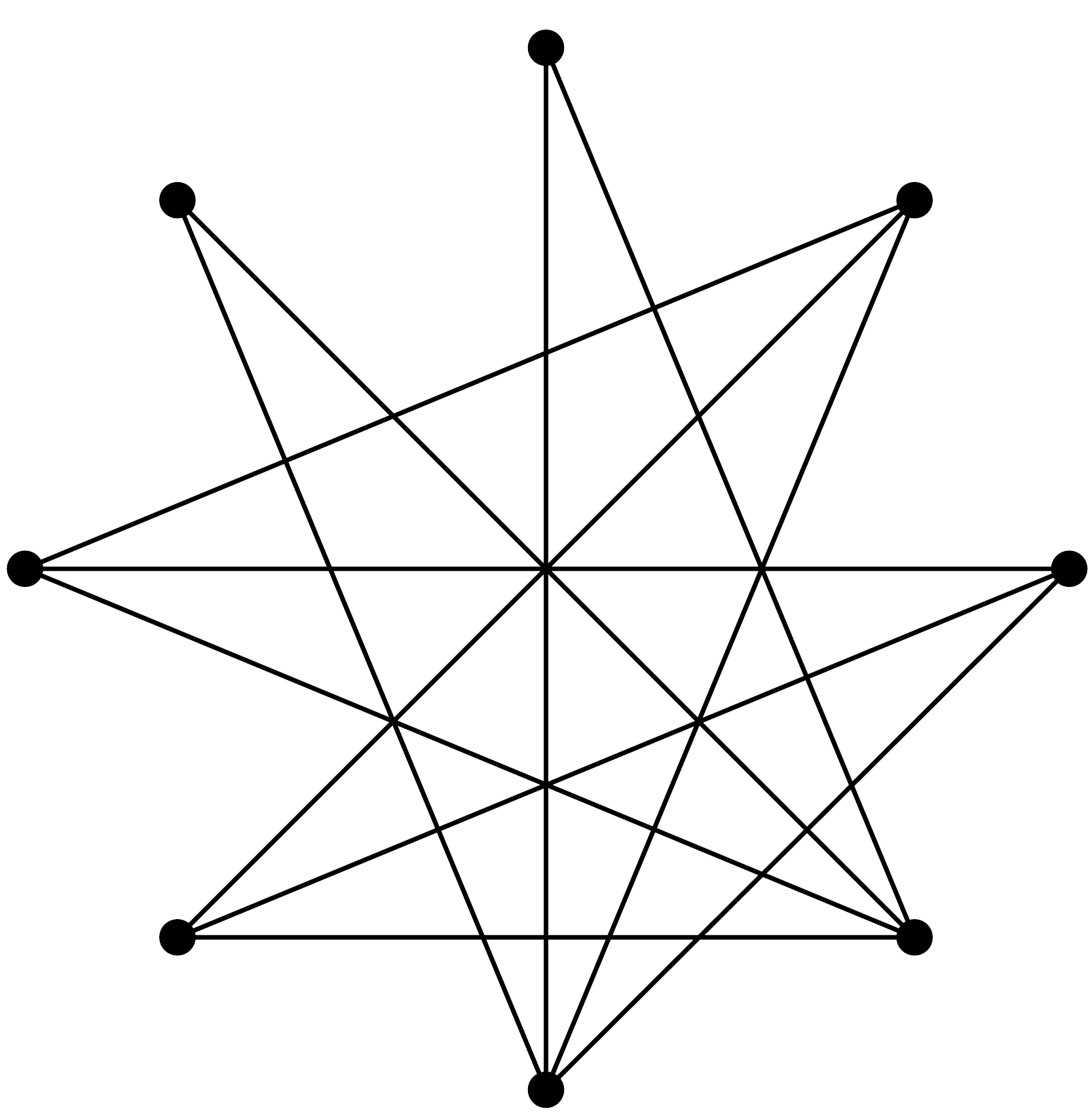}
		\caption*{$N_{8.6}$}
		\label{figure: N_8_6}
	\end{subfigure}\hfill
	\begin{subfigure}{.22\textwidth}
		\centering
		\includegraphics[trim={0 470 0 0},clip,height=75px,width=75px]{./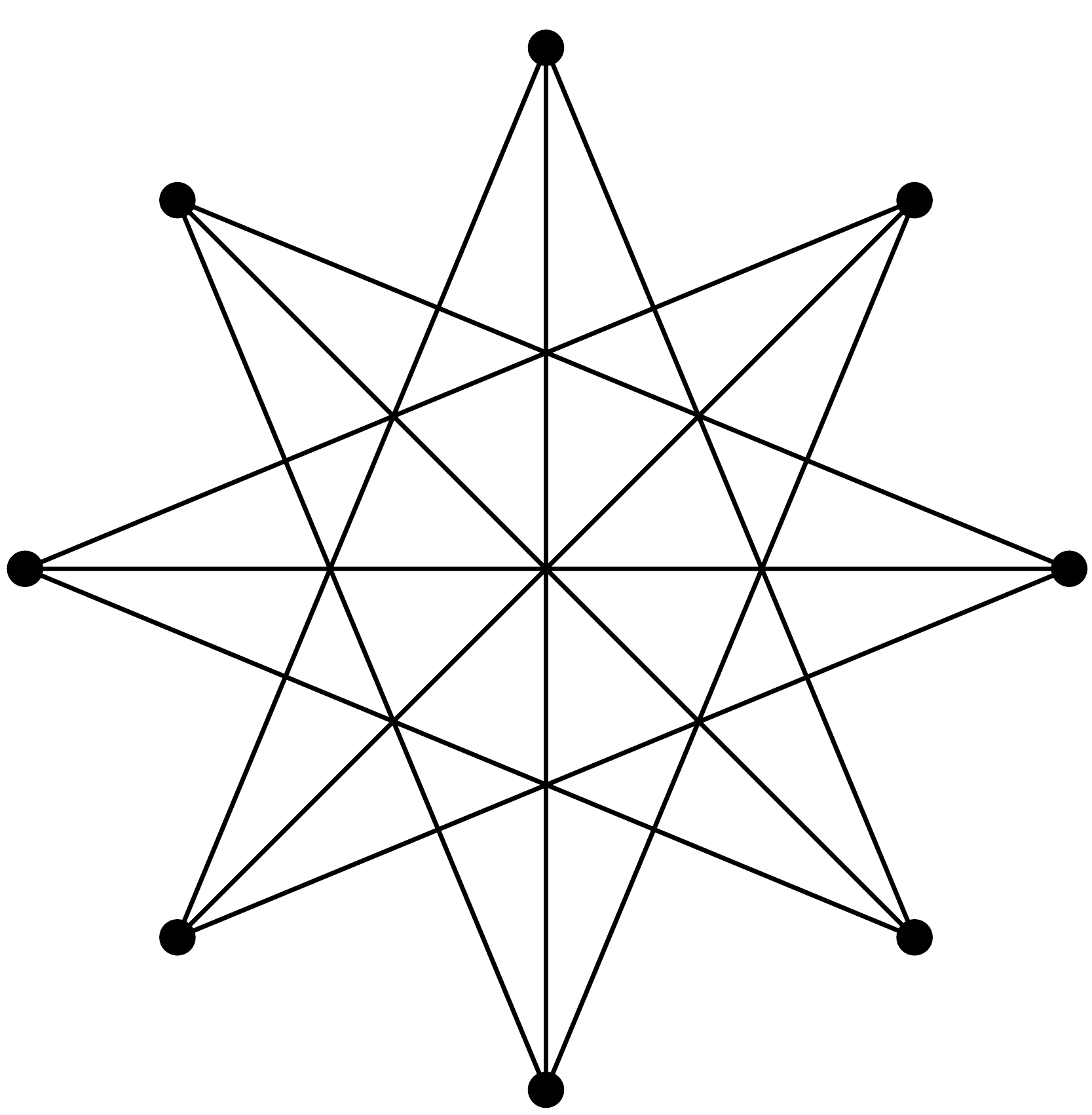}
		\caption*{$N_{8.7}$}
		\label{figure: N_8_7}
	\end{subfigure}
	\caption{The graphs $N_{8.i}, i = 1, ..., 7$}
	\label{figure: N_8_1 N_8_2 N_8_3 N_8_4 N_8_5 N_8_6 N_8_7}
\end{figure}

Also with the help of a computer, we obtain that the smallest graphs $N$ such that $K_3 \not\subset N$ and $\V(N)$ is a marked vertex set in $N$ have 8 vertices, and there are exactly 7 such graphs. Namely, they are the graphs $N_{8.i}, i = 1, ..., 7$ presented on Figure \ref{figure: N_8_1 N_8_2 N_8_3 N_8_4 N_8_5 N_8_6 N_8_7}. Among them, the minimal graphs are $N_{8.1}$, $N_{8.2}$ and $N_{8.3}$, and the remaining 4 graphs are their supergraphs. Thus, we derive the following
\begin{theorem}
\label{theorem: delta(G) geq 8}
Let $G$ be a minimal $(3, 3)$-Ramsey graph and $\omega(G) = 3$. Then, $\delta(G) \geq 8$. If $v \in \V(G)$ and $d(v) = 8$, then $G(v) = N_{8.i}$ for some $i \in \set{1, ..., 7}$ (see Figure \ref{figure: N_8_1 N_8_2 N_8_3 N_8_4 N_8_5 N_8_6 N_8_7}).
\end{theorem}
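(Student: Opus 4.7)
The plan is to adapt the local-structure argument used for Theorem \ref{theorem: delta(G) geq 5}. Pick an arbitrary vertex $v \in \V(G)$. Since $\omega(G) = 3$, the induced subgraph $G(v) = G[N_G(v)]$ is triangle-free: any $K_3$ inside $G(v)$ together with $v$ would form a $K_4$ in $G$, contradicting $\omega(G) = 3$.

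By Proposition \ref{proposition: marked vertex sets in (3, 3)-Ramsey graphs} applied with $s = 1$, $N_G(v)$ is a marked vertex set in $G - v$, so there exists a $(3, 3)$-free $2$-coloring $c$ of $\E(G - v)$ that cannot be extended to a $(3, 3)$-free $2$-coloring of $\E(G)$. Any monochromatic triangle created by such an extension must contain $v$ together with two adjacent vertices of $N_G(v)$, so the extendability of $c$ depends only on its restriction $c|_{\E(G(v))}$. Hence this restriction is a $(3, 3)$-free $2$-coloring of $G(v)$ that fails to extend to the cone $K_1 + G(v)$; equivalently, $\V(G(v))$ is a marked vertex set in $G(v)$.

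It now remains to invoke the computer-verified classification stated in the paragraph just before the theorem: every graph $N$ with $K_3 \not\subset N$ whose vertex set is a marked vertex set in $N$ has at least $8$ vertices, and when $\abs{\V(N)} = 8$ the graph $N$ is one of $N_{8.1}, \ldots, N_{8.7}$ depicted on Figure \ref{figure: N_8_1 N_8_2 N_8_3 N_8_4 N_8_5 N_8_6 N_8_7}. Applying this with $N = G(v)$ yields $d(v) = \abs{\V(G(v))} \geq 8$ for every $v$, hence $\delta(G) \geq 8$; moreover whenever $d(v) = 8$, the graph $G(v)$ must coincide with one of the seven listed triangle-free graphs, which is exactly the second assertion of the theorem.

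The main obstacle lies not in the graph-theoretic reduction but in the classification step it rests on: one must systematically enumerate the triangle-free graphs on at most $8$ vertices and, for each such $N$, check every $(3, 3)$-free $2$-coloring of $\E(N)$ to see whether $\V(N)$ admits a partition $A \sqcup B$ with $A$ independent in the color-$1$ subgraph and $B$ independent in the color-$2$ subgraph (failure of this condition for some coloring being precisely what it means for $\V(N)$ to be marked in $N$). The delicate part is ruling out that any triangle-free graph on $5$, $6$, or $7$ vertices has this property; once the enumeration is complete, the reduction above finishes the proof in one line.
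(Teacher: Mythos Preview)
Your proposal is correct and follows essentially the same approach as the paper: the paper's argument (given in the paragraphs opening Section~8) is precisely the reduction from ``$N_G(v)$ is marked in $G-v$'' to ``$\V(G(v))$ is marked in $G(v)$'', combined with the computer classification of triangle-free graphs $N$ with $\V(N)$ marked in $N$, and you have spelled out the extendability argument that the paper leaves implicit.
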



\vspace{14pt}

\centerline{ACKNOWLEDGEMENTS}

\vspace{12pt}

I would like to thank prof. Nedyalko Nenov, who read the manuscript and made some suggestions which led to the improvement of this work.


\clearpage

\centerline{APPENDICES}

\vspace{14pt}

\centerline{A. GRAPHS}

\vspace{12pt}

\begin{figure}[h]
	\captionsetup{justification=centering}
	\begin{subfigure}{.33\textwidth}
		\centering
		\includegraphics[trim={0 0 0 490},clip,height=100px,width=100px]{./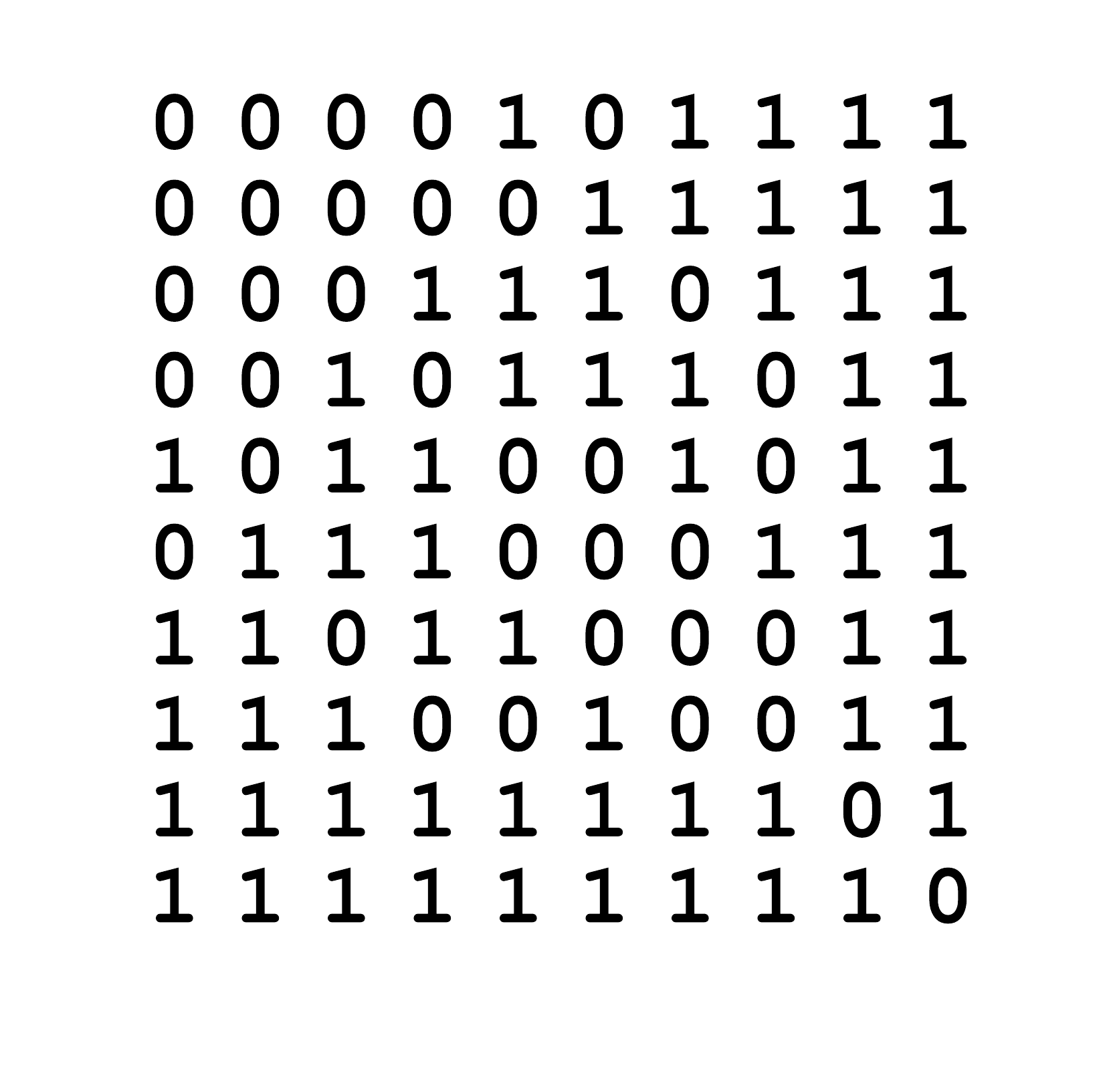}
		\vspace{-1.5em}
		\caption*{$G_{10.1}$}
		\label{figure: 10_1}
	\end{subfigure}\hfill
	\begin{subfigure}{.33\textwidth}
		\centering
		\includegraphics[trim={0 0 0 490},clip,height=100px,width=100px]{./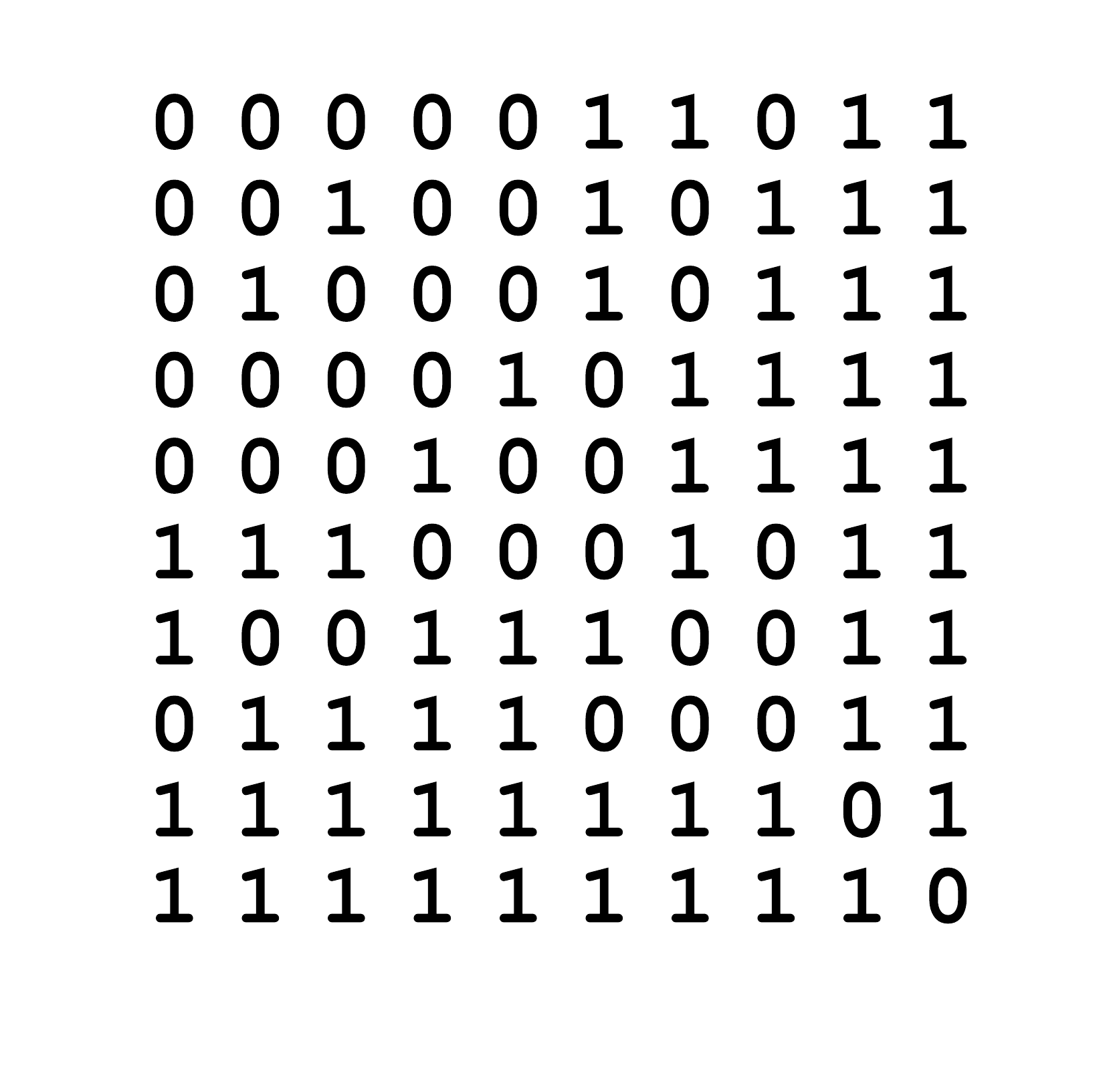}
		\vspace{-1.5em}
		\caption*{$G_{10.2}$}
		\label{figure: 10_2}
	\end{subfigure}\hfill
	\begin{subfigure}{.33\textwidth}
		\centering
		\includegraphics[trim={0 0 0 490},clip,height=100px,width=100px]{./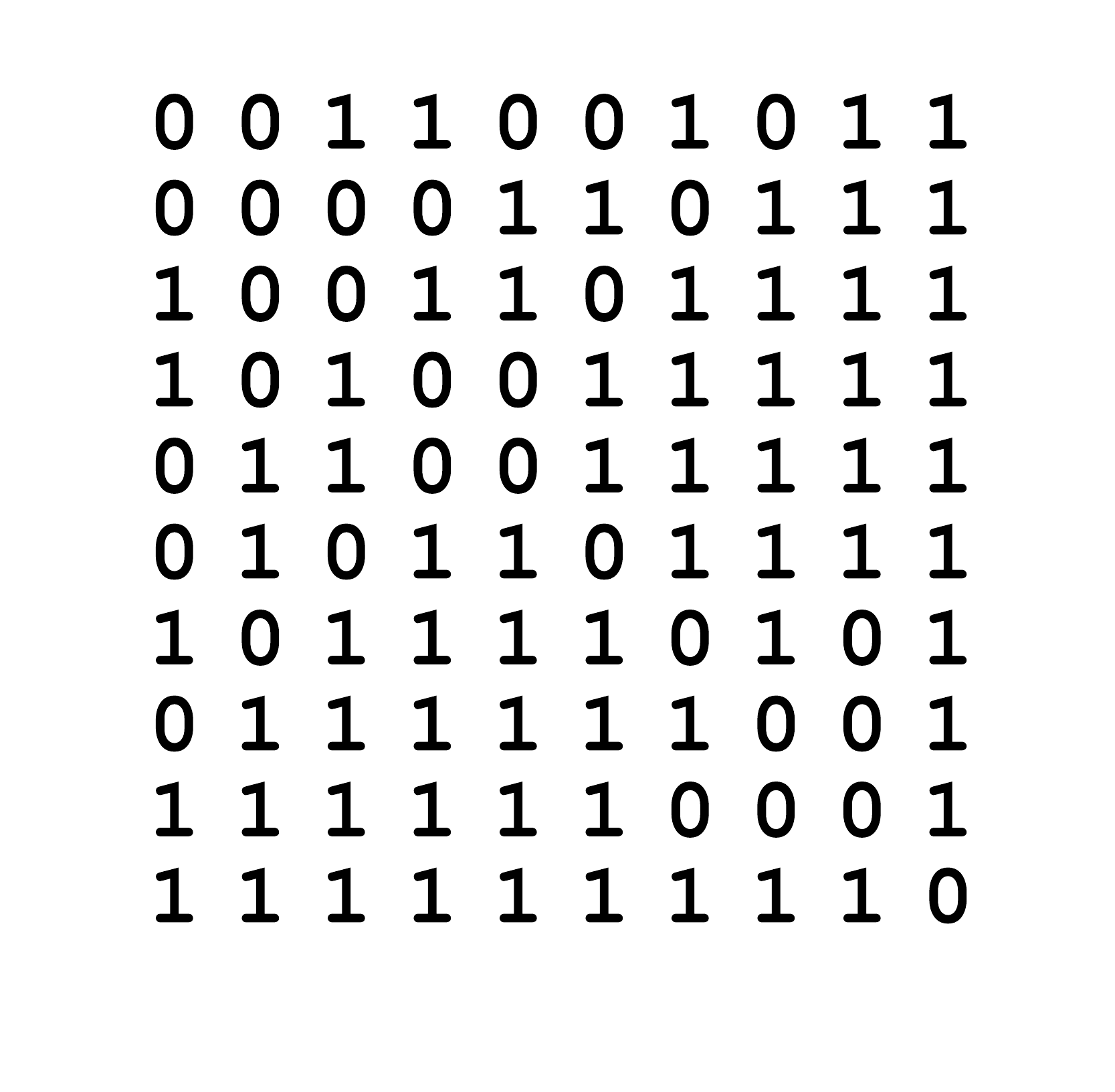}
		\vspace{-1.5em}
		\caption*{$G_{10.3}$}
		\label{figure: 10_3}
	\end{subfigure}
	
	\vspace{0.5em}
	\begin{subfigure}{.33\textwidth}
		\centering
		\includegraphics[trim={0 0 0 490},clip,height=100px,width=100px]{./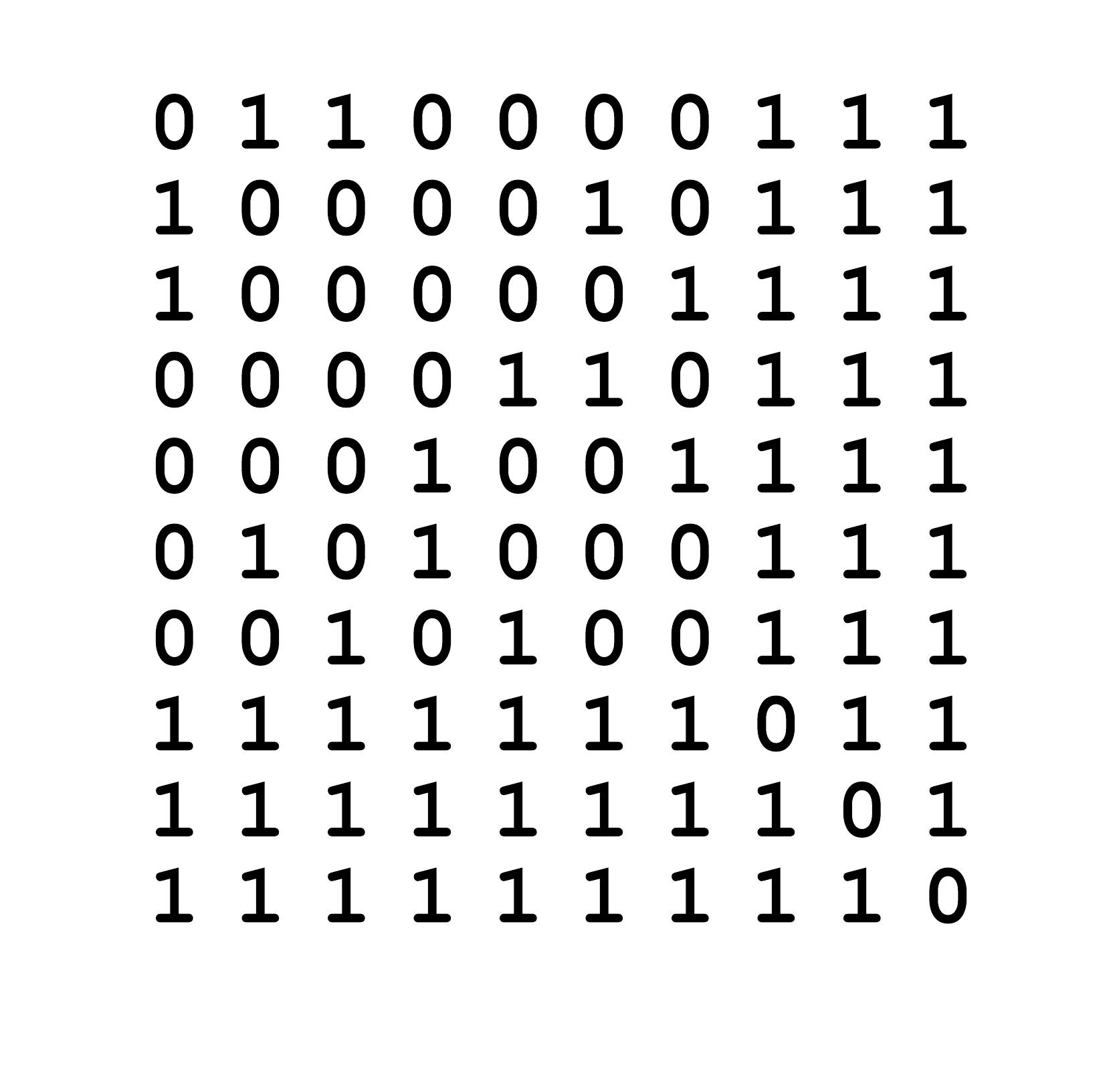}
		\vspace{-1.5em}
		\caption*{$G_{10.4}$}
		\label{figure: 10_4}
	\end{subfigure}\hfill
	\begin{subfigure}{.33\textwidth}
		\centering
		\includegraphics[trim={0 0 0 490},clip,height=100px,width=100px]{./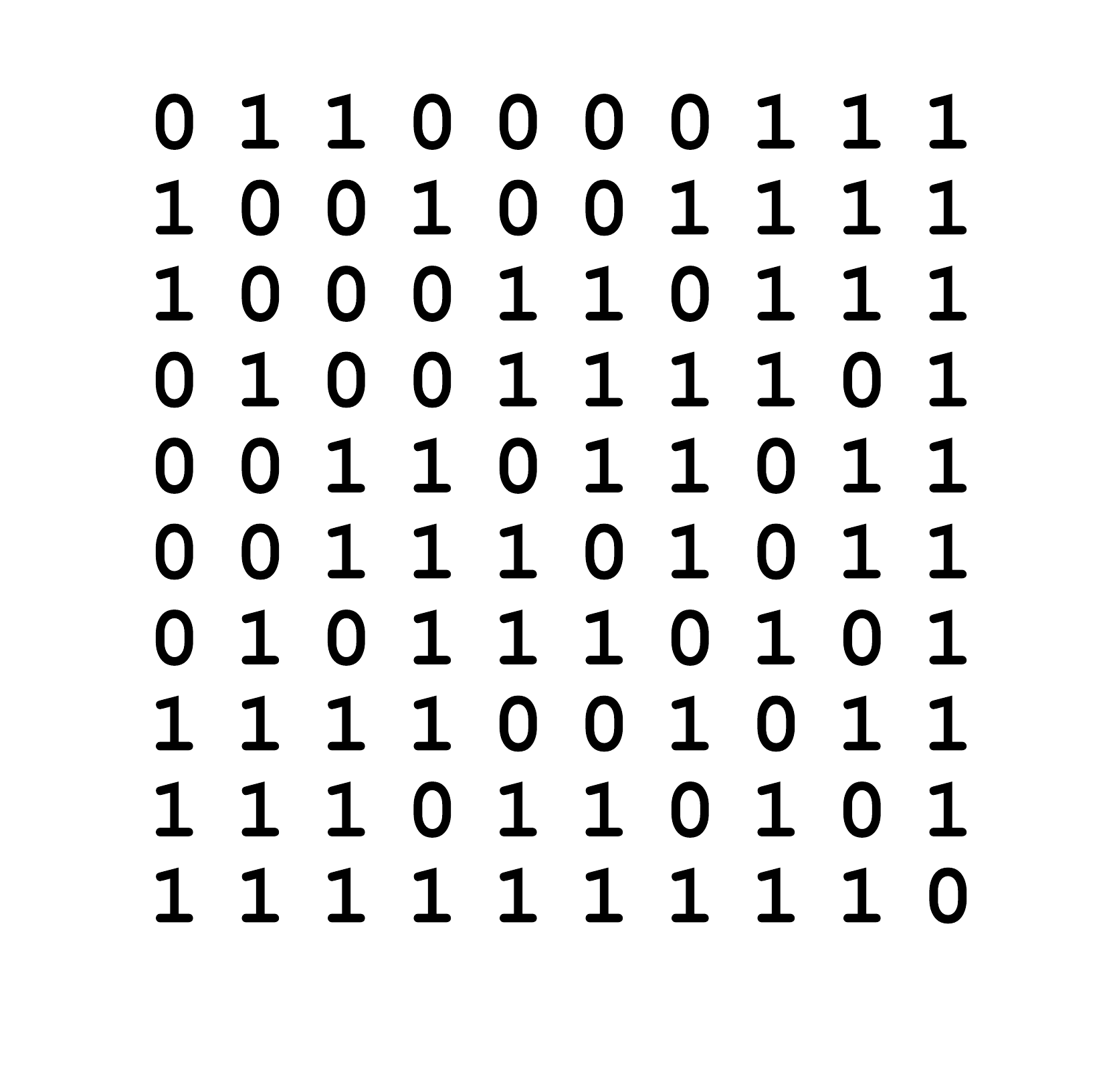}
		\vspace{-1.5em}
		\caption*{$G_{10.5}$}
		\label{figure: 10_5}
	\end{subfigure}\hfill
	\begin{subfigure}{.33\textwidth}
		\centering
		\includegraphics[trim={0 0 0 490},clip,height=100px,width=100px]{./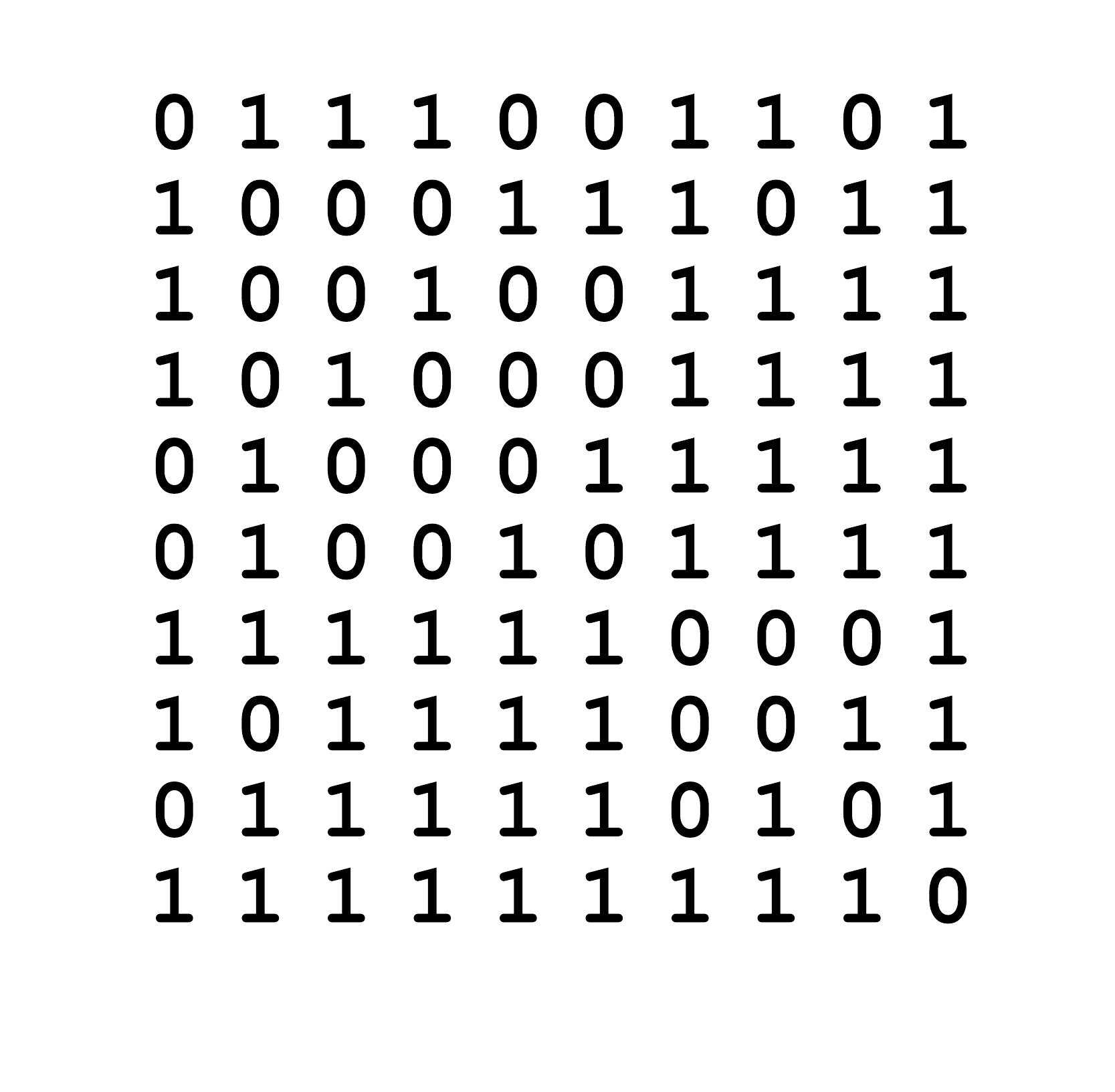}
		\vspace{-1.5em}
		\caption*{$G_{10.6}$}
		\label{figure: 10_6}
	\end{subfigure}
	
	\vspace{0.5em}
	\caption{10-vertex minimal $(3, 3)$-Ramsey graphs}
	\label{figure: 10}
\end{figure}

\begin{figure}[h]
	\captionsetup{justification=centering}
	\begin{subfigure}{.33\textwidth}
		\centering
		\includegraphics[trim={0 0 0 490},clip,height=110px,width=110px]{./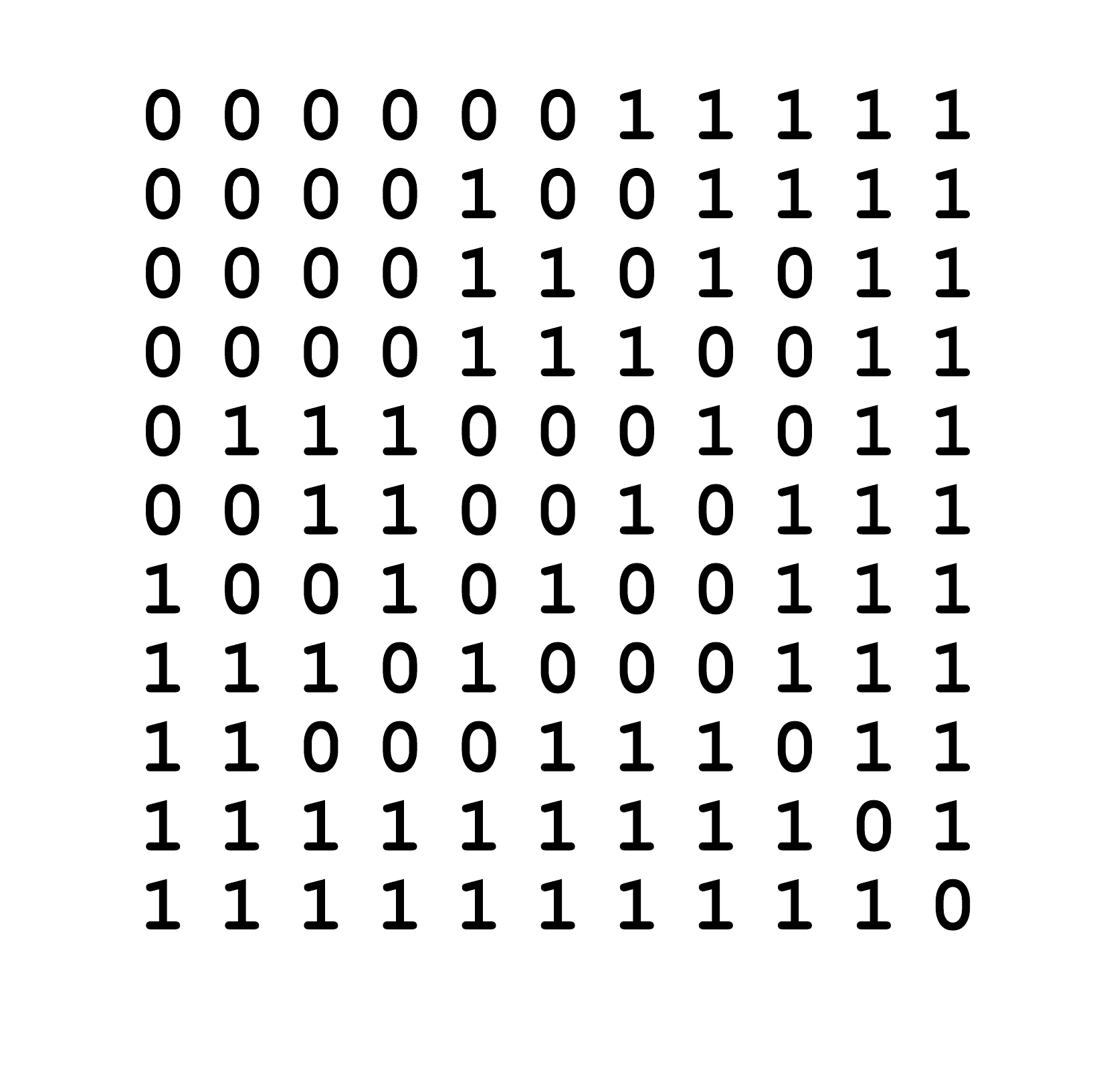}
		\vspace{-1.5em}
		\caption*{$G_{11.1}$}
		\label{figure: 11_1}
	\end{subfigure}\hfill
	\begin{subfigure}{.33\textwidth}
		\centering
		\includegraphics[trim={0 0 0 490},clip,height=110px,width=110px]{./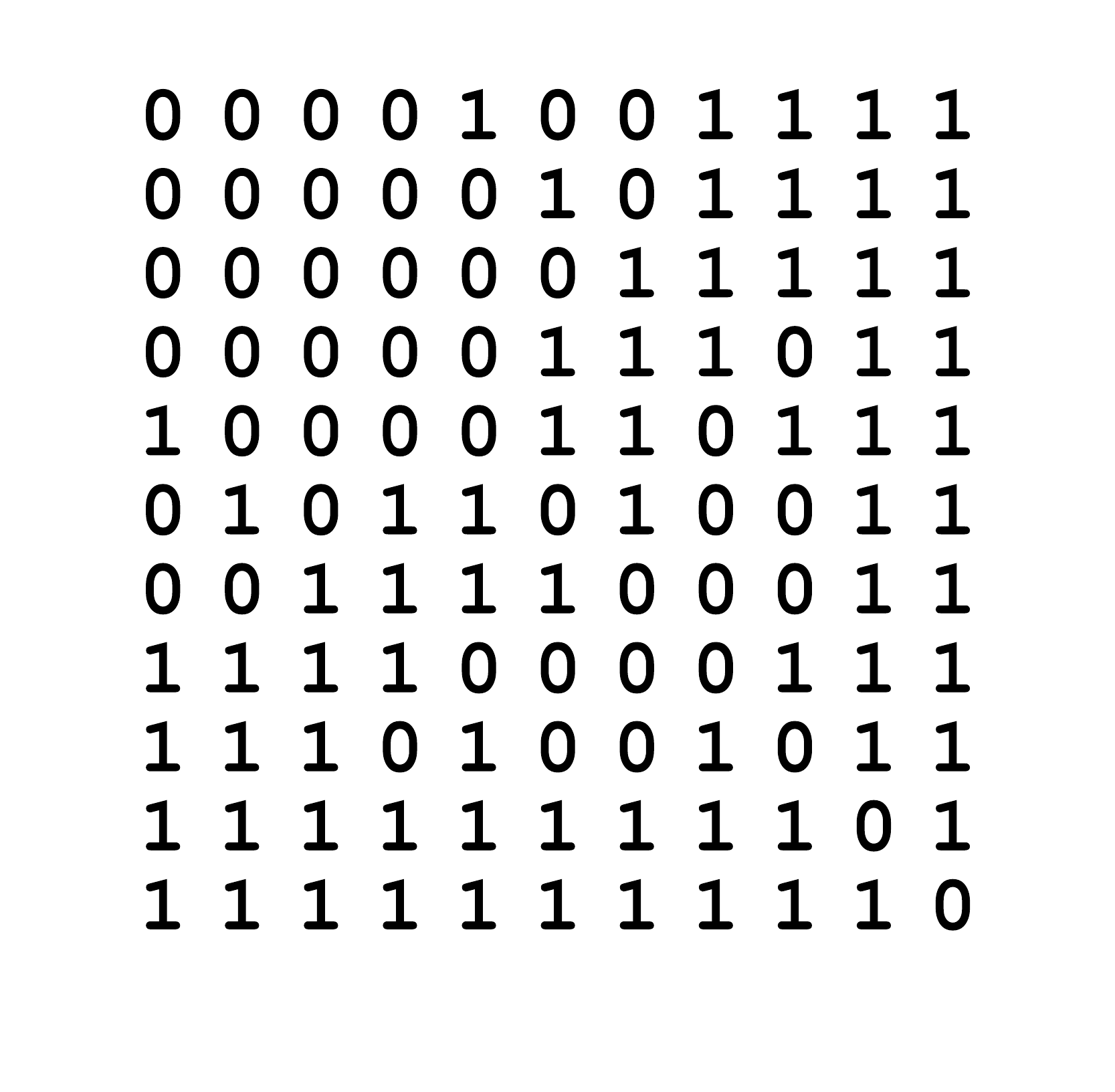}
		\vspace{-1.5em}
		\caption*{$G_{11.2}$}
		\label{figure: 11_2}
	\end{subfigure}\hfill
	\begin{subfigure}{.33\textwidth}
		\centering
		\includegraphics[trim={0 0 0 490},clip,height=110px,width=110px]{./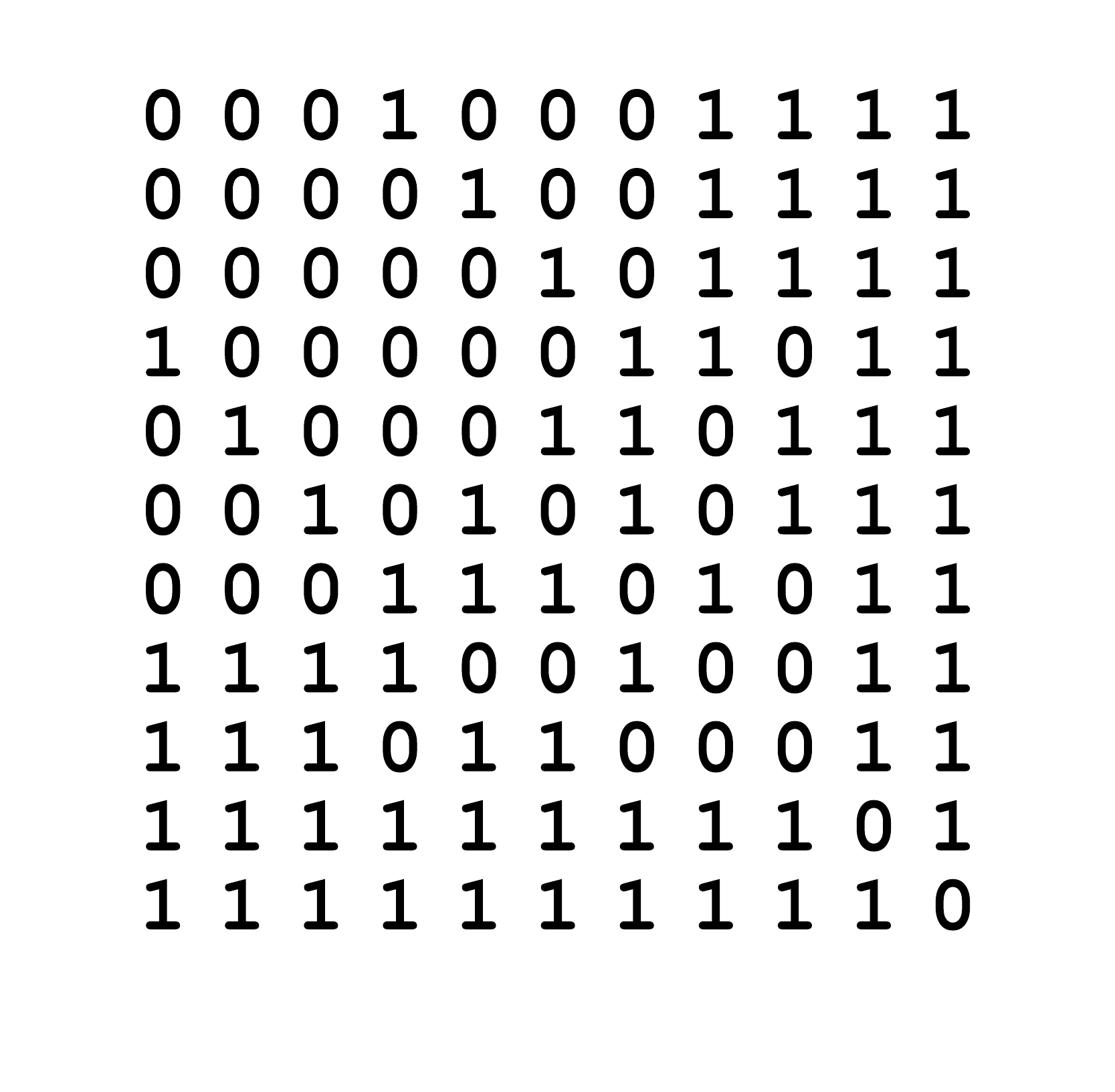}
		\vspace{-1.5em}
		\caption*{$G_{11.21}$}
		\label{figure: 11_21}
	\end{subfigure}
	
	\vspace{0.5em}
	\caption{11-vertex minimal $(3, 3)$-Ramsey graphs\\ with independence number 4}
	\label{figure: 11_a4}
\end{figure}

\begin{figure}[h]
	\captionsetup{justification=centering}
	\begin{subfigure}{.45\textwidth}
		\centering
		\includegraphics[trim={0 0 0 490},clip,height=110px,width=110px]{./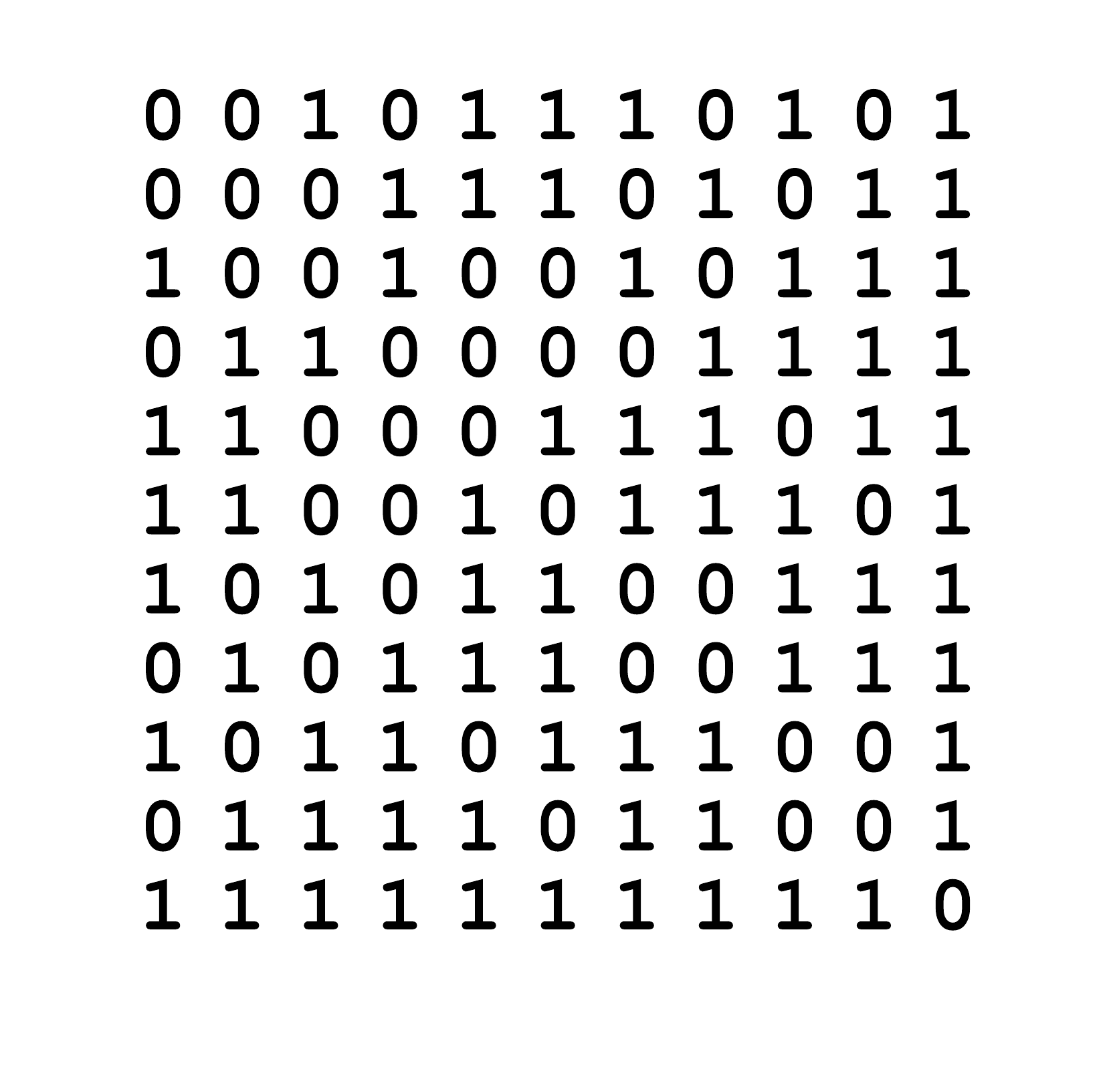}
		\vspace{-1.5em}
		\caption*{$G_{11.46}$}
		\label{figure: 11_46}
	\end{subfigure}\hfill
	\begin{subfigure}{.45\textwidth}
		\centering
		\includegraphics[trim={0 0 0 490},clip,height=110px,width=110px]{./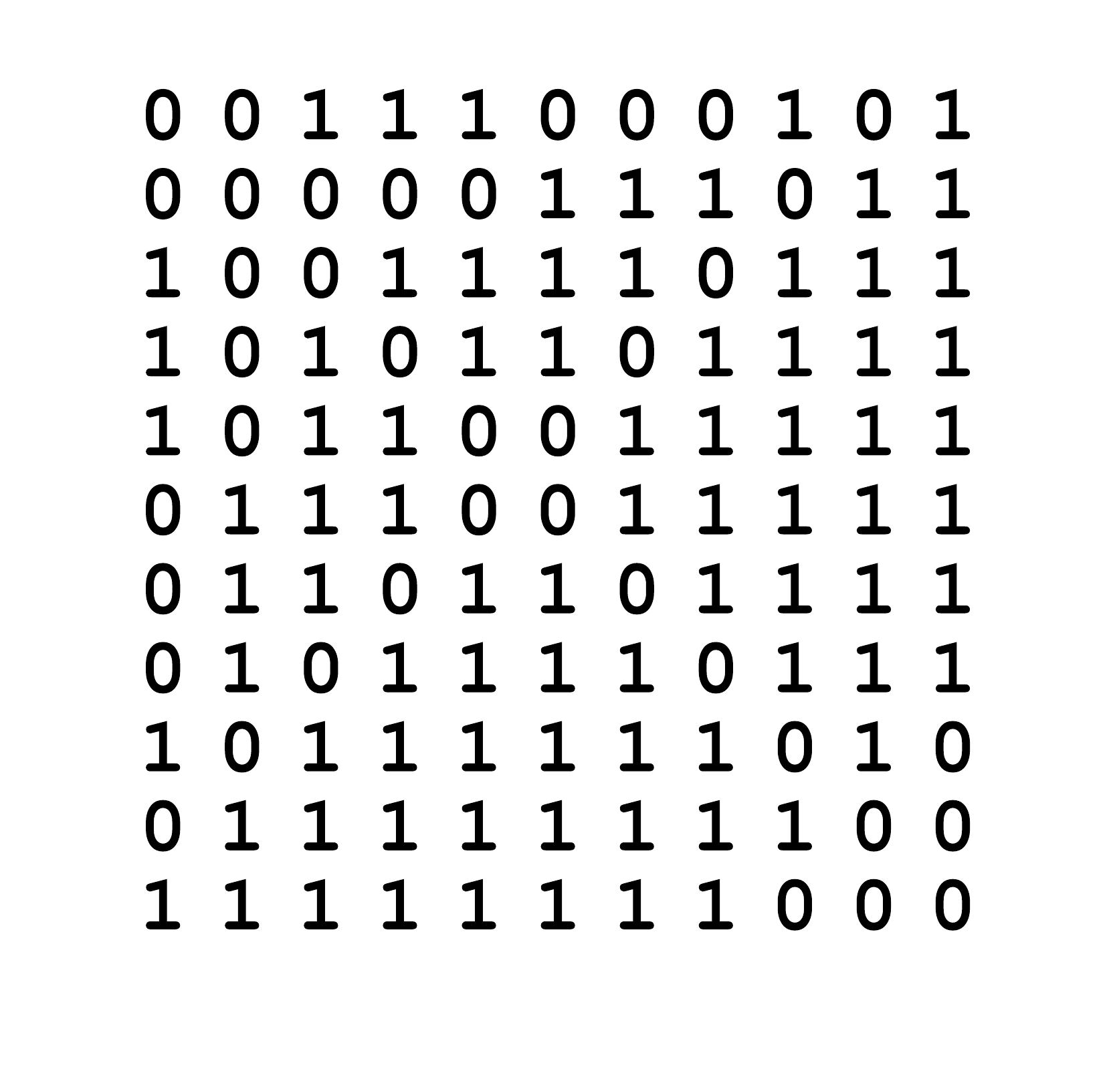}
		\vspace{-1.5em}
		\caption*{$G_{11.47}$}
		\label{figure: 11_47}
	\end{subfigure}
	
	\vspace{0.5em}
	\begin{subfigure}{.45\textwidth}
		\centering
		\includegraphics[trim={0 0 0 490},clip,height=110px,width=110px]{./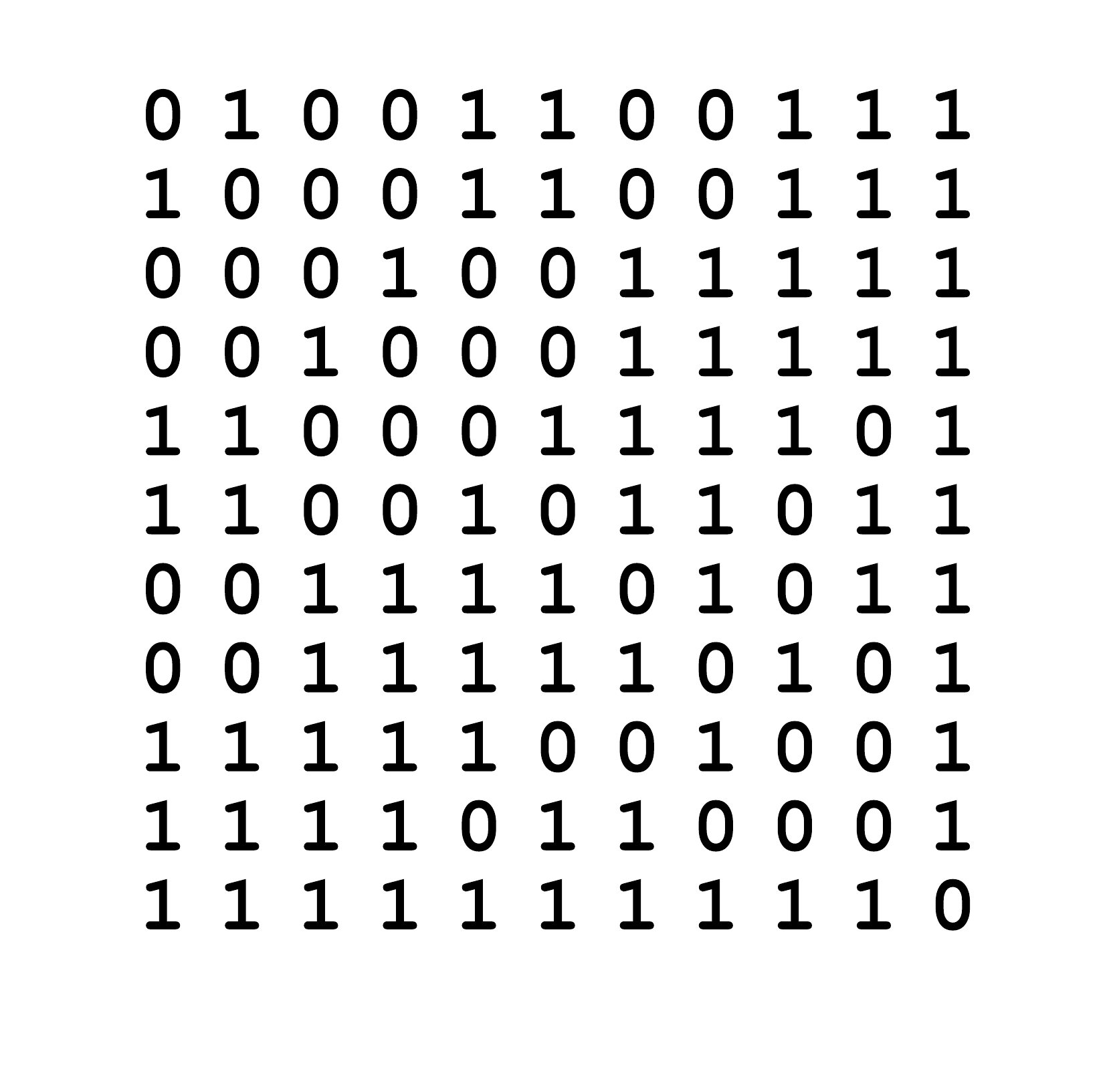}
		\vspace{-1.5em}
		\caption*{$G_{11.54}$}
		\label{figure: 11_54}
	\end{subfigure}\hfill
	\begin{subfigure}{.45\textwidth}
		\centering
		\includegraphics[trim={0 0 0 490},clip,height=110px,width=110px]{./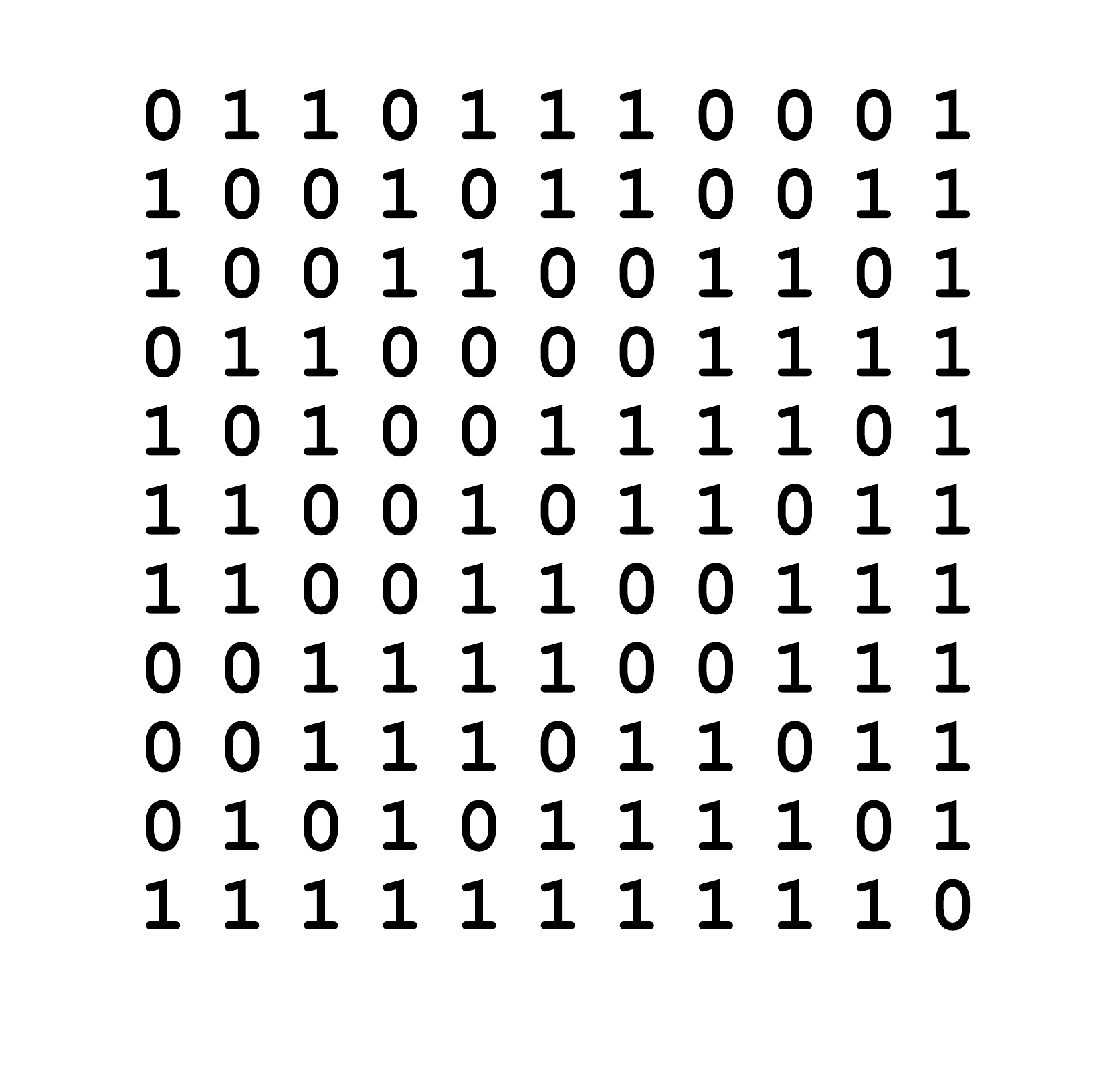}
		\vspace{-1.5em}
		\caption*{$G_{11.69}$}
		\label{figure: 11_69}
	\end{subfigure}
	
	\vspace{0.5em}
	\caption{11-vertex minimal $(3, 3)$-Ramsey graphs\\ with independence number 2}
	\label{figure: 11_a2}
\end{figure}

\begin{figure}[h]
	\captionsetup{justification=centering}
	\begin{minipage}{.45\textwidth}
		\centering
		\begin{subfigure}{\textwidth}
			\centering
			\includegraphics[trim={0 0 0 490},clip,height=120px,width=120px]{./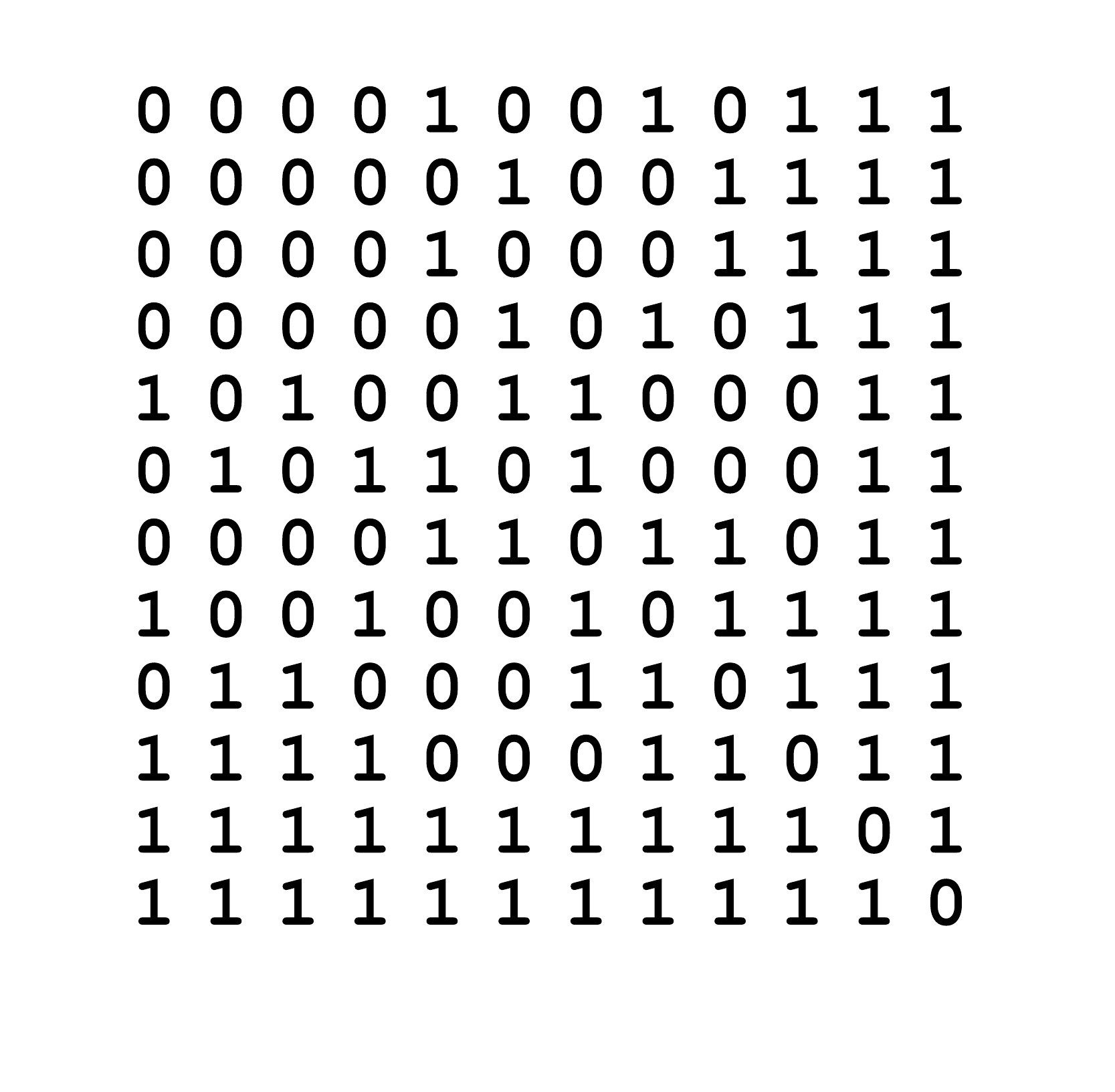}
			\vspace{-1.5em}
			\caption*{$G_{12.163}$}
			\label{figure: 12_163}
		\end{subfigure}
		
		\vspace{0.5em}
		\caption{12-vertex minimal $(3, 3)$-Ramsey graph\\ with independence number 5}
		\label{figure: 12_a5}
	\end{minipage}\hfill
	\begin{minipage}{.45\textwidth}
		\centering
		\begin{subfigure}{\textwidth}
			\centering
			\includegraphics[trim={0 0 0 490},clip,height=120px,width=120px]{./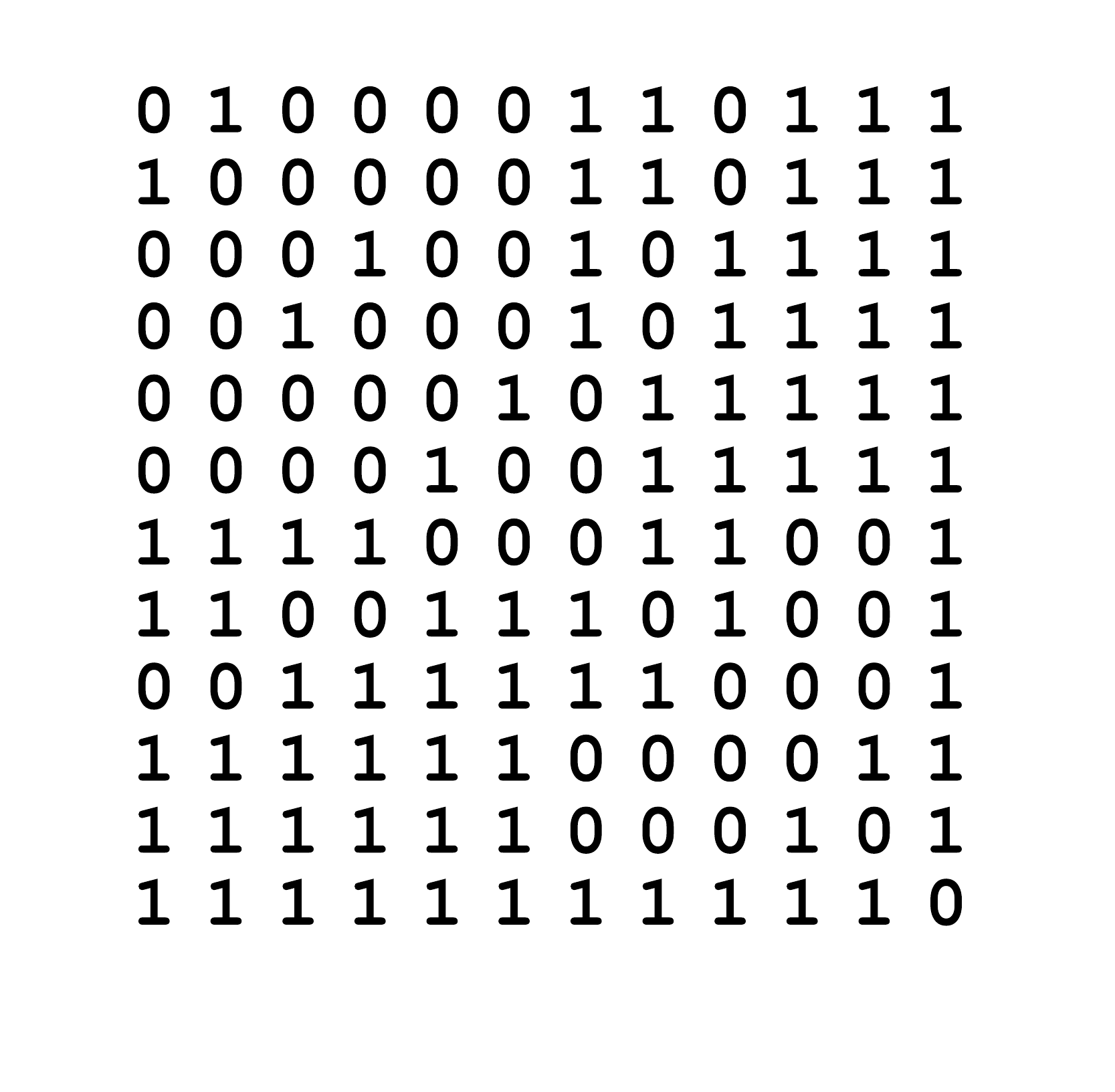}
			\vspace{-1.5em}
			\caption*{$G_{12.2240}$}
			\label{figure: 12_2240}
		\end{subfigure}
		
		\vspace{0.5em}
		\caption{12-vertex minimal $(3, 3)$-Ramsey graph\\ with 96 automorphisms}
		\label{figure: 12_aut}
	\end{minipage}\hfill
\end{figure}

\begin{figure}[h]
	\captionsetup{justification=centering}
	\begin{subfigure}{.45\textwidth}
		\centering
		\includegraphics[trim={0 0 0 490},clip,height=130px,width=130px]{./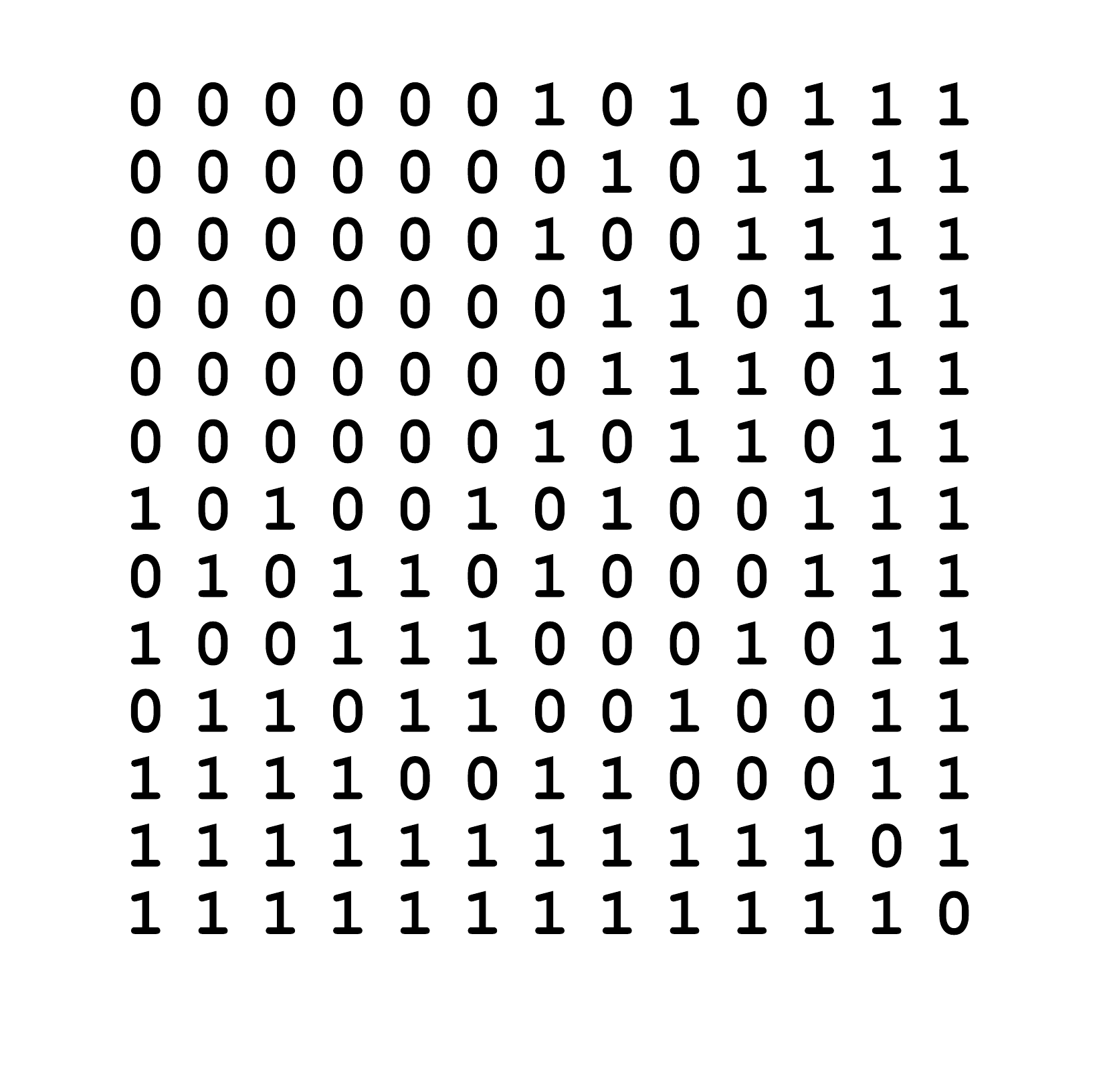}
		\vspace{-1.5em}
		\caption*{$G_{13.1}$}
		\label{figure: 13_1}
	\end{subfigure}\hfill
	\begin{subfigure}{.45\textwidth}
		\centering
		\includegraphics[trim={0 0 0 490},clip,height=130px,width=130px]{./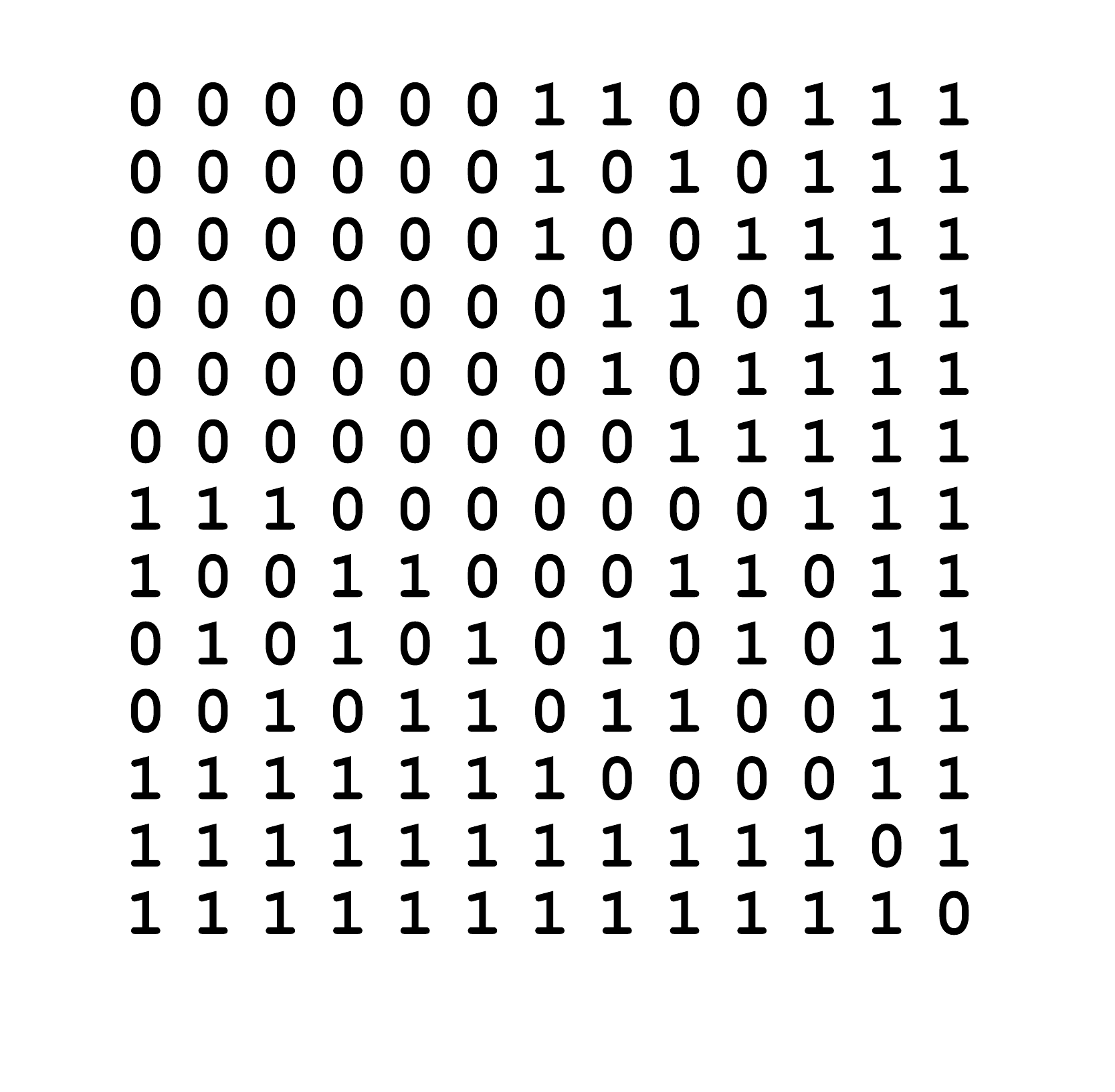}
		\vspace{-1.5em}
		\caption*{$G_{13.2}$}
		\label{figure: 13_2}
	\end{subfigure}
	
	\vspace{0.5em}
	\caption{13-vertex minimal $(3, 3)$-Ramsey graphs\\ with independence number 6}
	\label{figure: 13_a6}
\end{figure}

\begin{figure}[h]
	\captionsetup{justification=centering}
	\begin{subfigure}{.33\textwidth}
		\centering
		\includegraphics[trim={0 0 0 490},clip,height=130px,width=130px]{./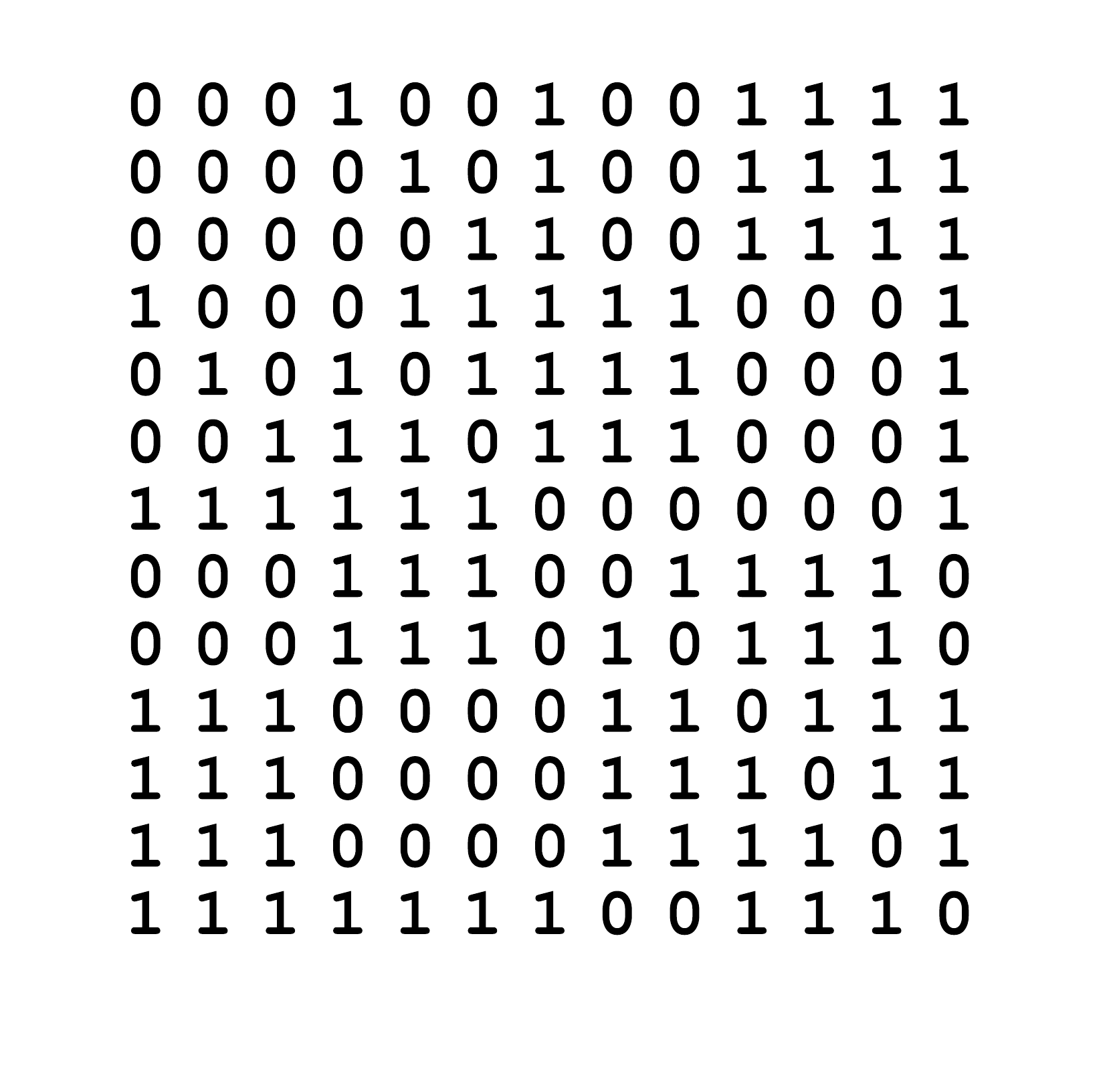}
		\vspace{-2em}
		\caption*{$G_{13.113198}$}
		\label{figure: 13_113198}
	\end{subfigure}\hfill
	\begin{subfigure}{.33\textwidth}
		\centering
		\includegraphics[trim={0 0 0 490},clip,height=130px,width=130px]{./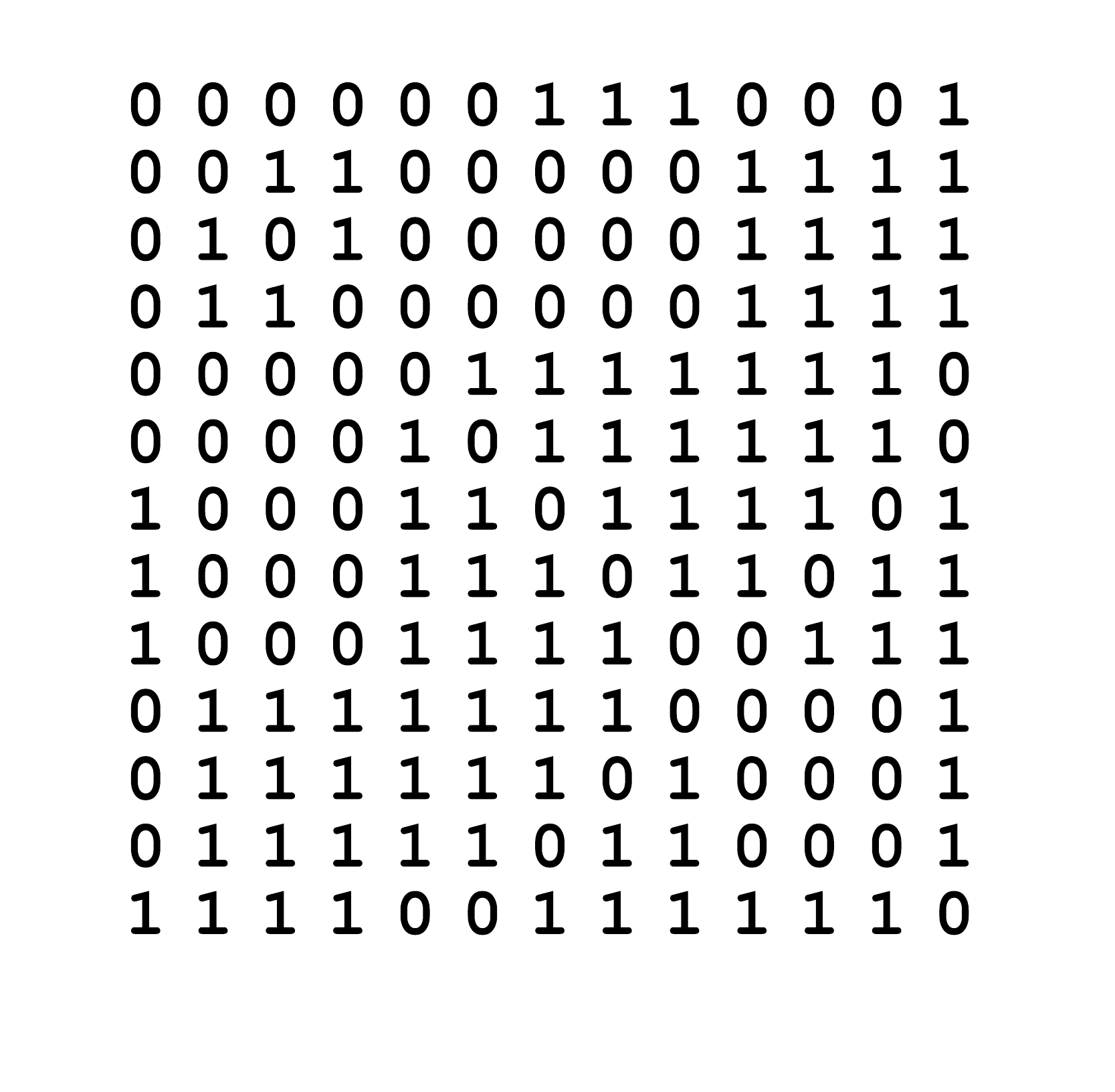}
		\vspace{-2em}
		\caption*{$G_{13.175639}$}
		\label{figure: 13_175639}
	\end{subfigure}\hfill
	\begin{subfigure}{.33\textwidth}
		\centering
		\includegraphics[trim={0 0 0 490},clip,height=130px,width=130px]{./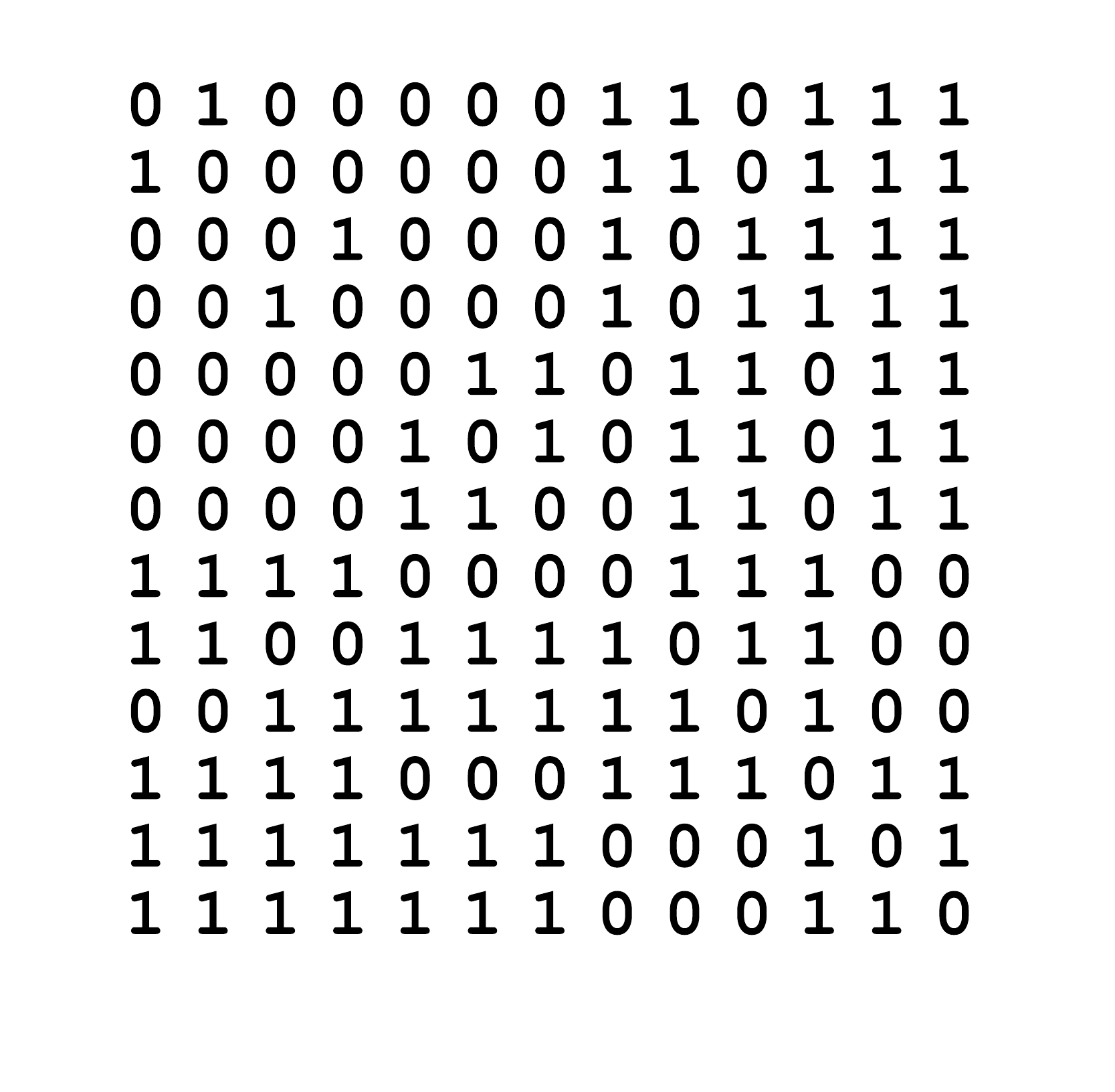}
		\vspace{-2em}
		\caption*{$G_{13.248305}$}
		\label{figure: 13_248305}
	\end{subfigure}
	
	\vspace{0.5em}
	\begin{subfigure}{.33\textwidth}
		\centering
		\includegraphics[trim={0 0 0 490},clip,height=130px,width=130px]{./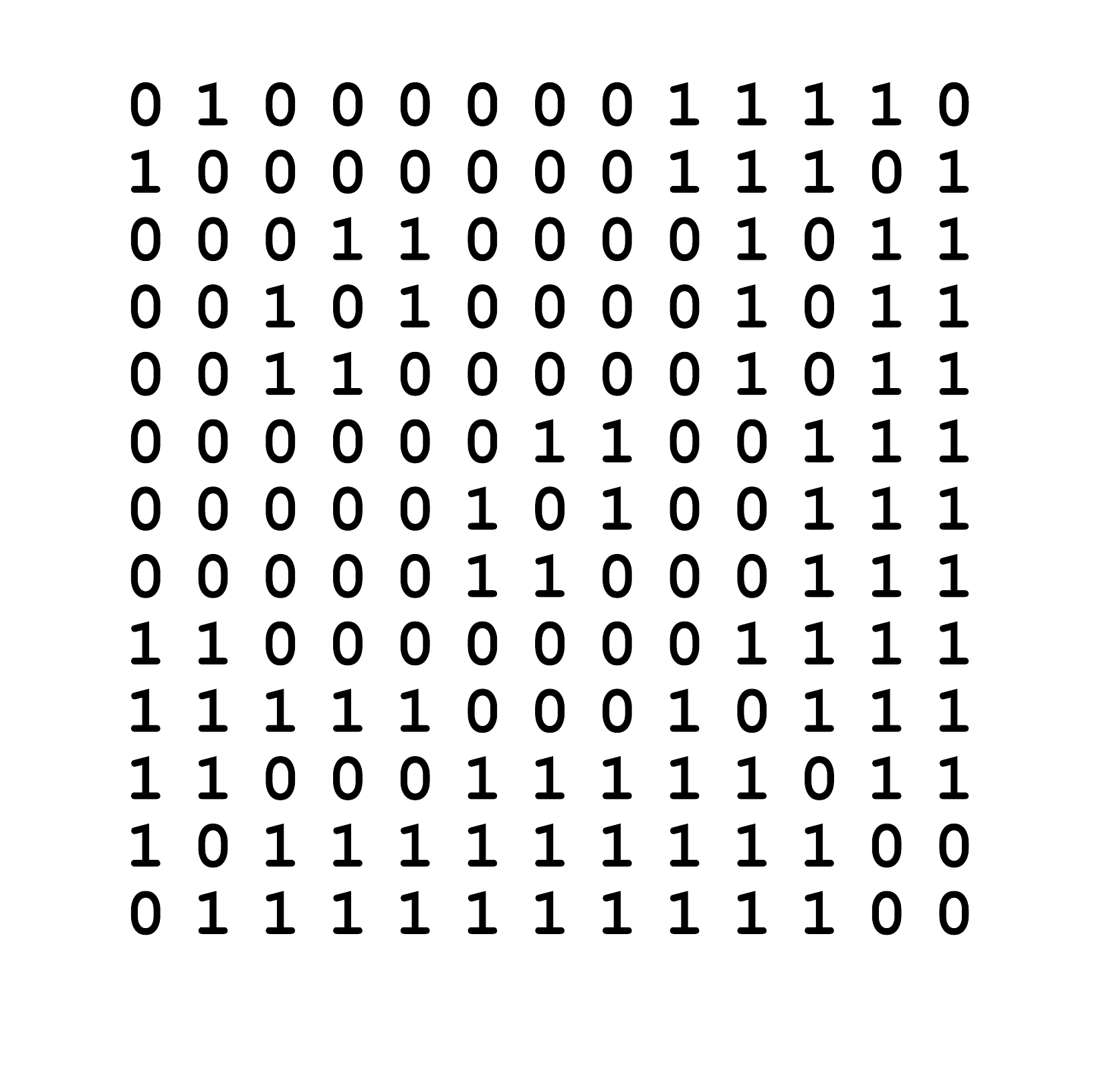}
		\vspace{-2em}
		\caption*{$G_{13.255653}$}
		\label{figure: 13_255653}
	\end{subfigure}\hfill
	\begin{subfigure}{.33\textwidth}
		\centering
		\includegraphics[trim={0 0 0 490},clip,height=130px,width=130px]{./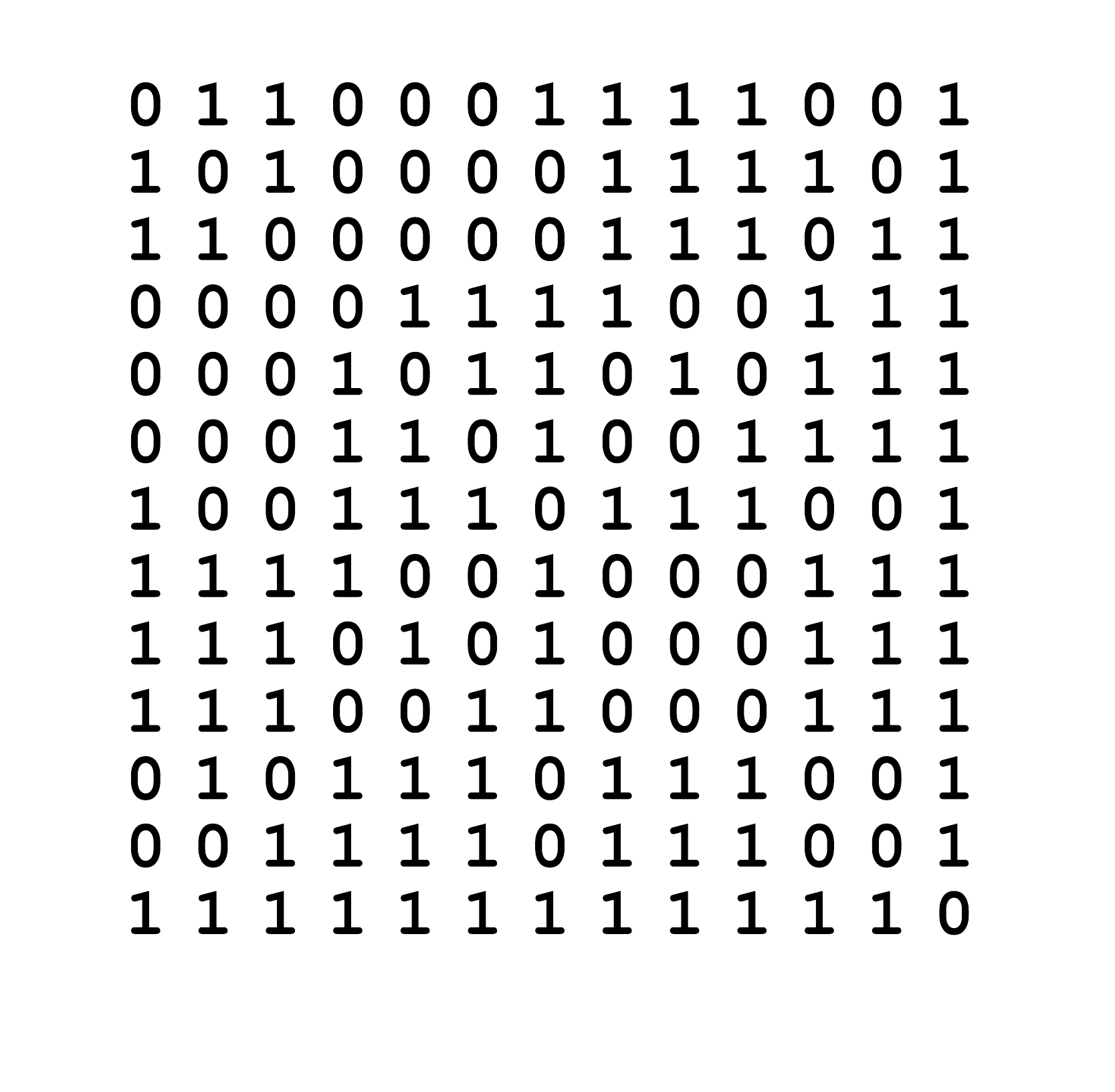}
		\vspace{-2em}
		\caption*{$G_{13.302168}$}
		\label{figure: 13_302168}
	\end{subfigure}\hfill
	\begin{subfigure}{.33\textwidth}
		\centering
		\includegraphics[trim={0 0 0 490},clip,height=130px,width=130px]{./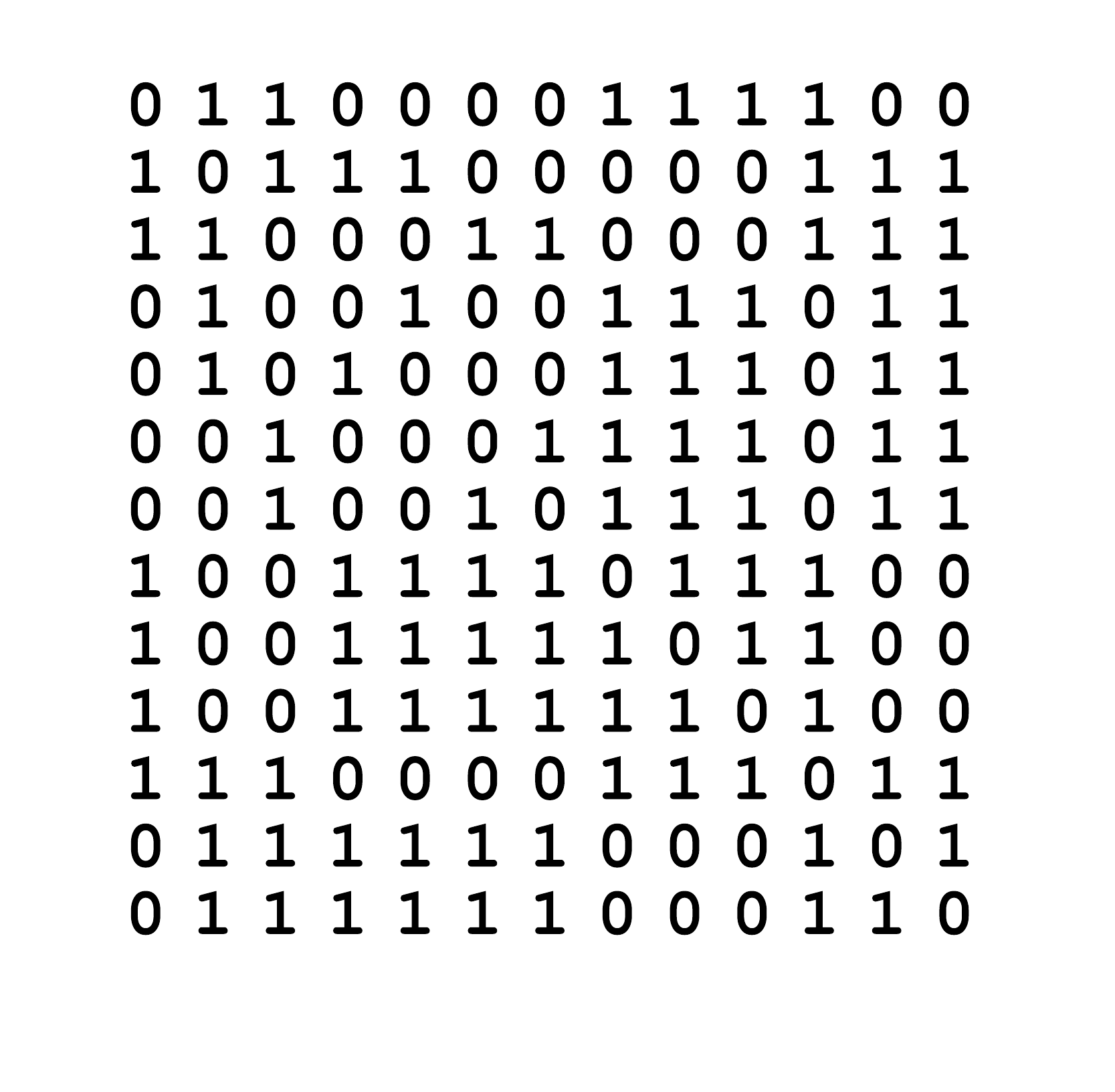}
		\vspace{-2em}
		\caption*{$G_{13.304826}$}
		\label{figure: 13_304826}
	\end{subfigure}
	
	\vspace{0.5em}
	\caption{13-vertex minimal $(3, 3)$-Ramsey graphs\\ with a large number of automorphisms}
	\label{figure: 13_aut}
\end{figure}

\begin{figure}[h]
	\captionsetup{justification=centering}
	\begin{subfigure}{.33\textwidth}
		\centering
		\includegraphics[trim={0 0 0 490},clip,height=100px,width=100px]{./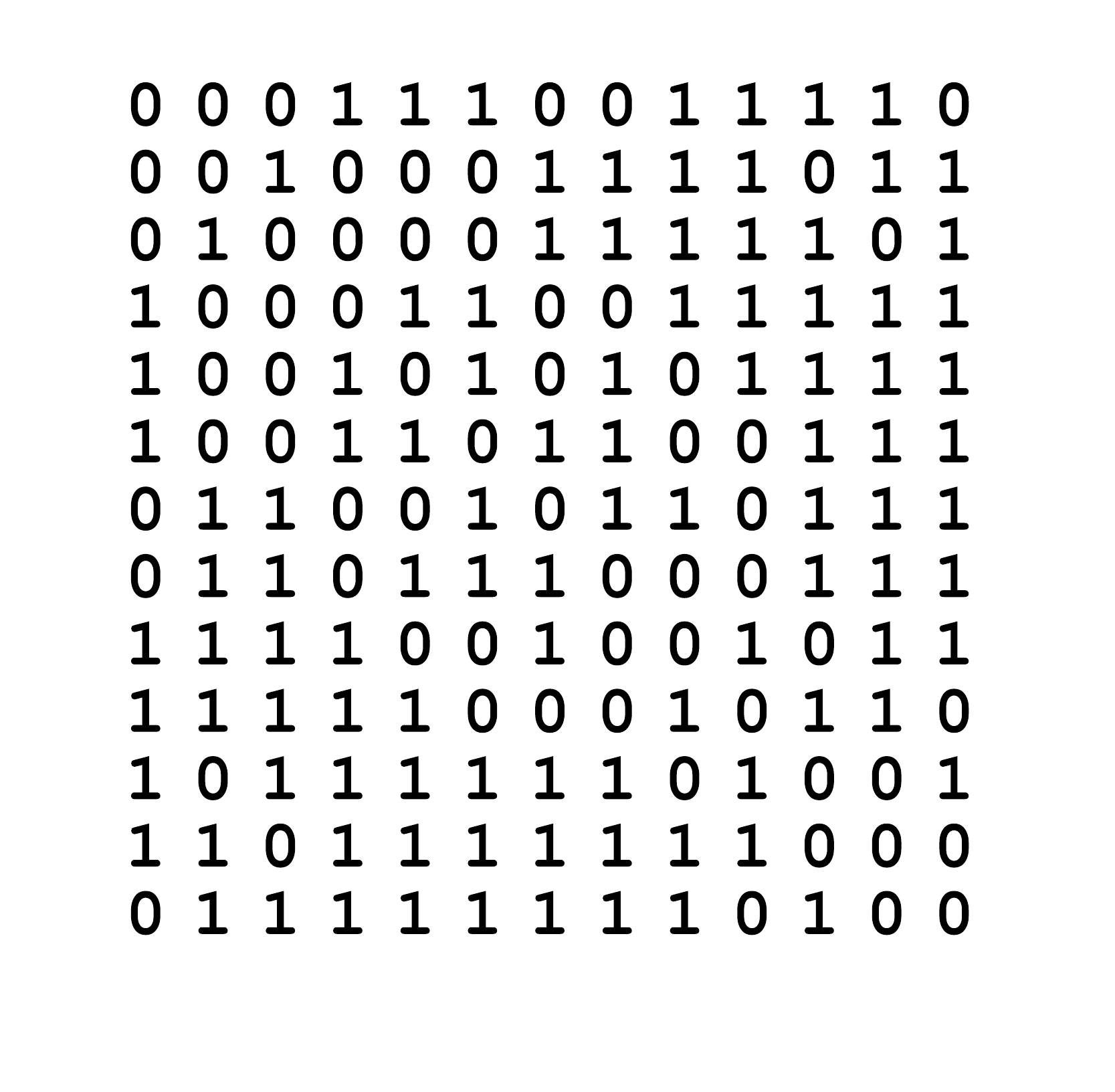}
		\vspace{-1.5em}
		\caption*{$G_{13.193684}$}
		\label{figure: 13_193684}
	\end{subfigure}\hfill
	\begin{subfigure}{.33\textwidth}
		\centering
		\includegraphics[trim={0 0 0 490},clip,height=100px,width=100px]{./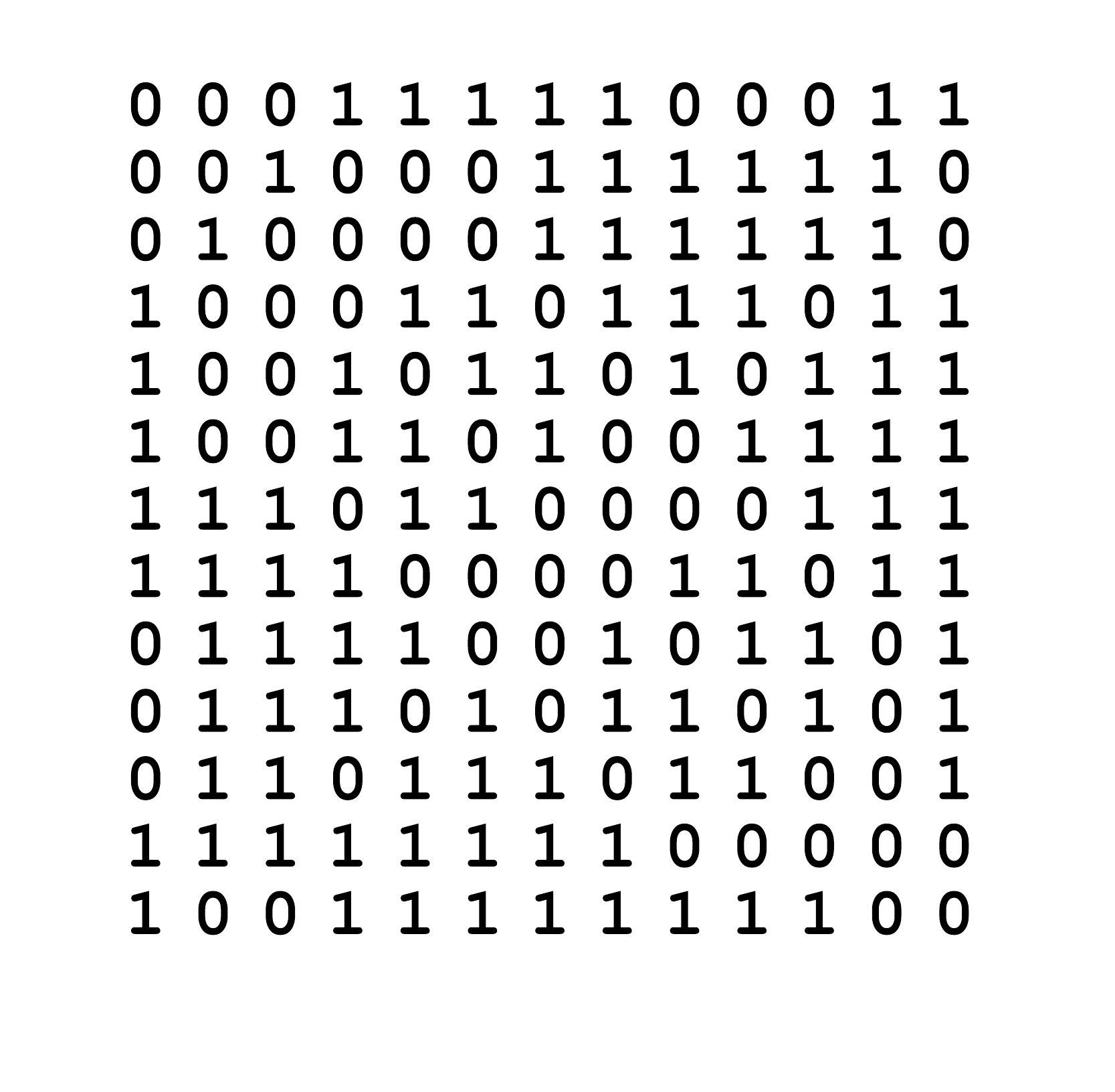}
		\vspace{-1.5em}
		\caption*{$G_{13.193760}$}
		\label{figure: 13_193760}
	\end{subfigure}\hfill
	\begin{subfigure}{.33\textwidth}
		\centering
		\includegraphics[trim={0 0 0 490},clip,height=100px,width=100px]{./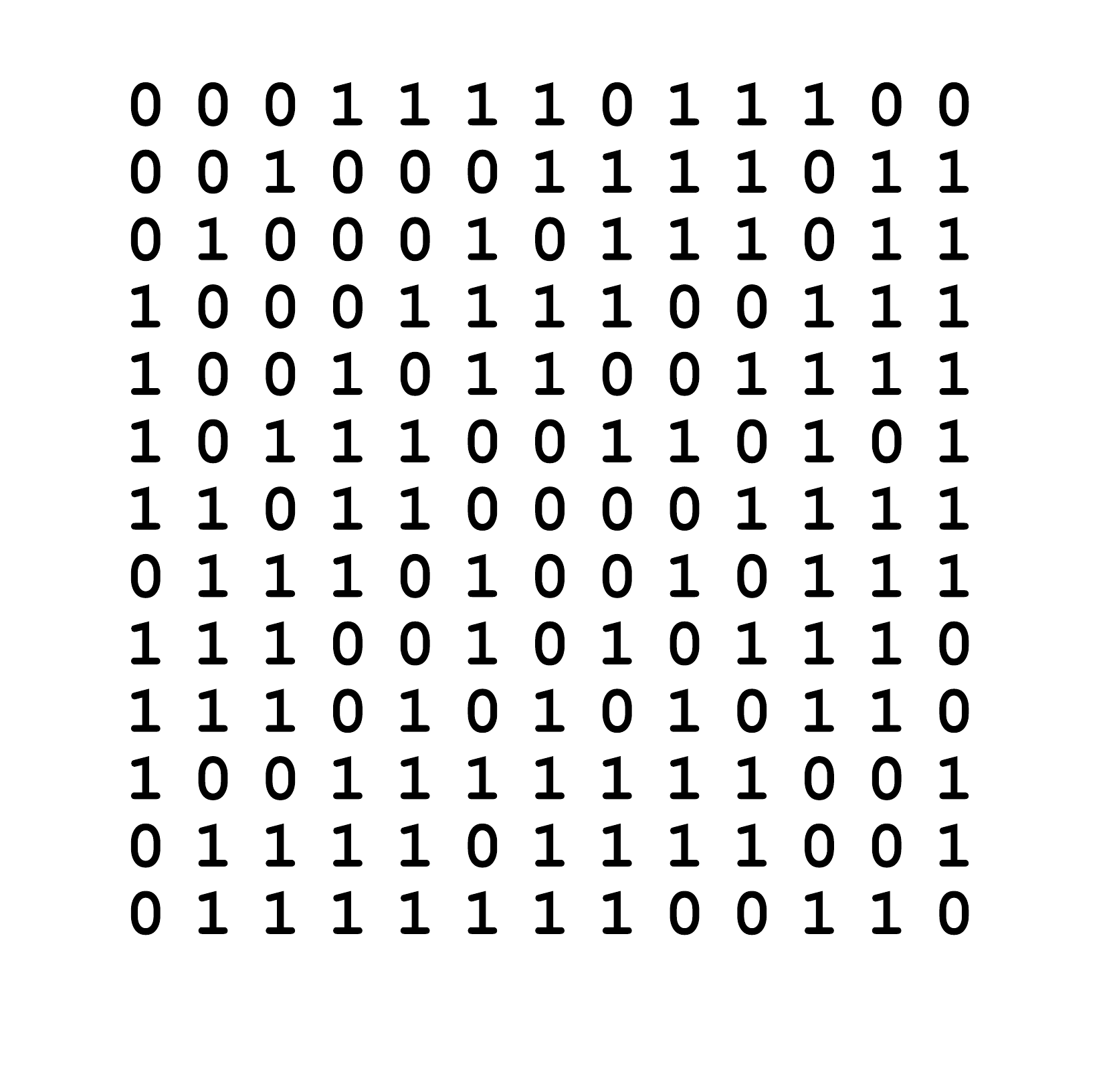}
		\vspace{-1.5em}
		\caption*{$G_{13.193988}$}
		\label{figure: 13_193988}
	\end{subfigure}
	
	\begin{subfigure}{.33\textwidth}
		\centering
		\includegraphics[trim={0 0 0 490},clip,height=100px,width=100px]{./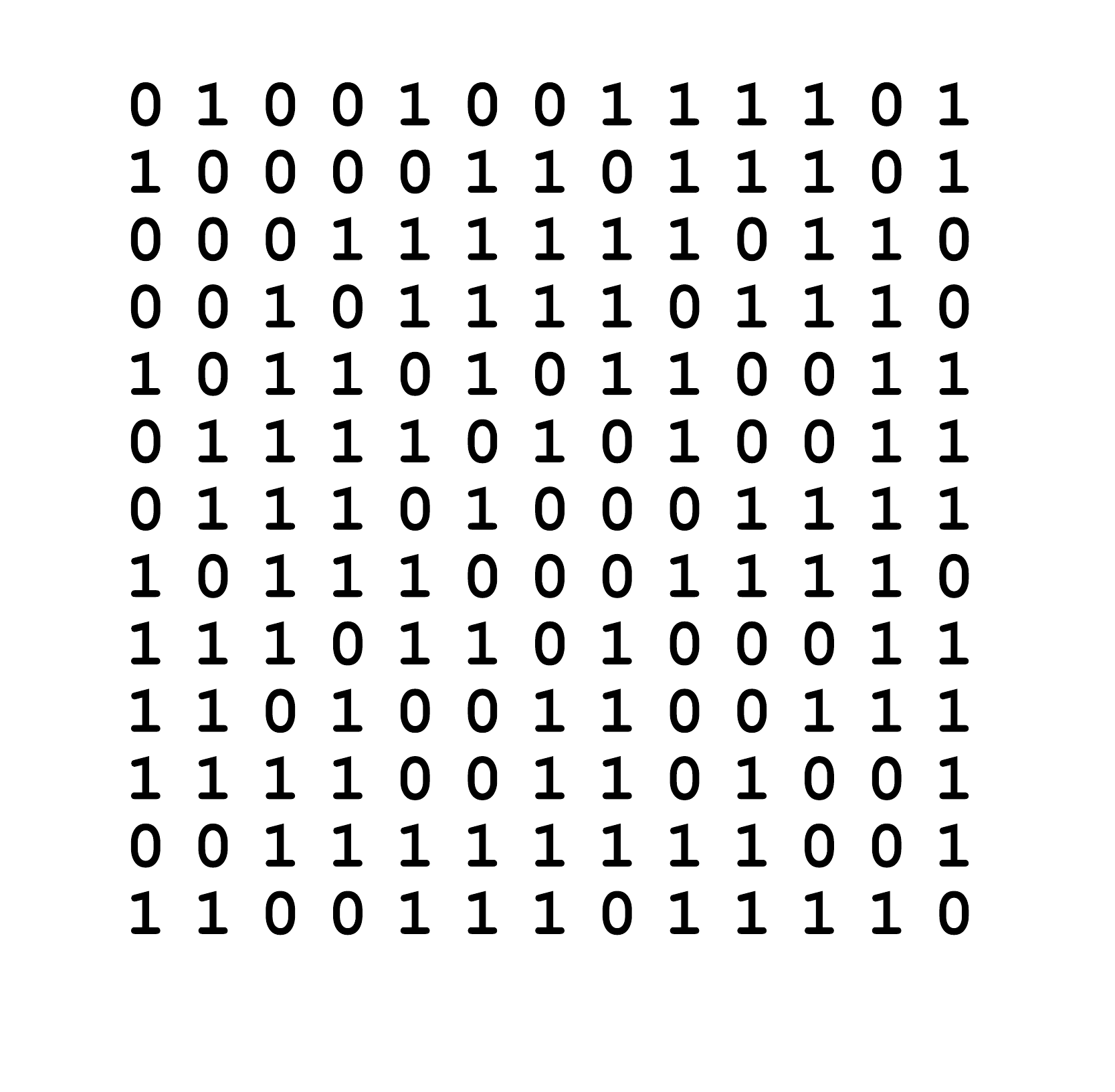}
		\vspace{-1.5em}
		\caption*{$G_{13.265221}$}
		\label{figure: 13_265221}
	\end{subfigure}\hfill
	\begin{subfigure}{.33\textwidth}
		\centering
		\includegraphics[trim={0 0 0 490},clip,height=100px,width=100px]{./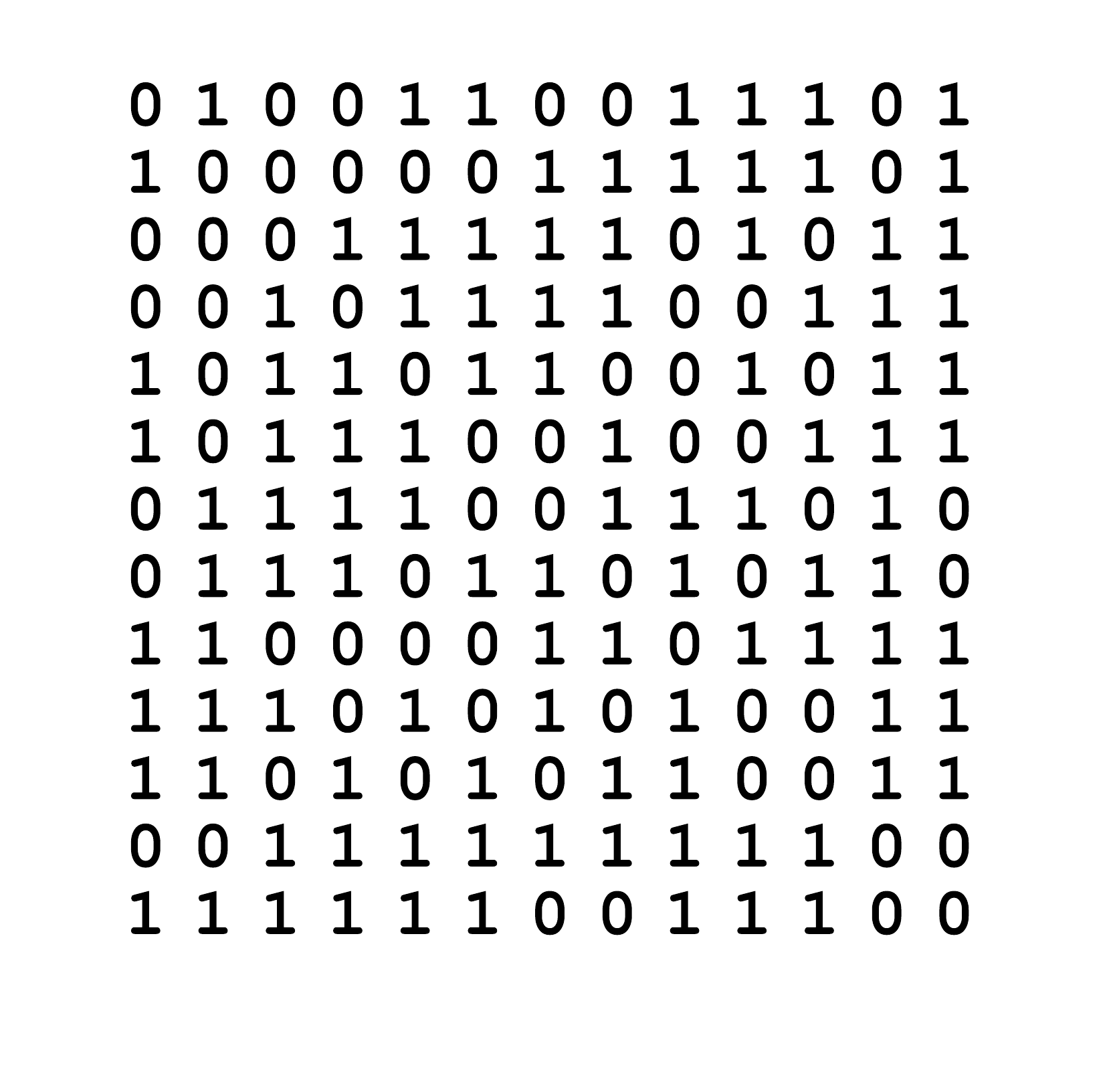}
		\vspace{-1.5em}
		\caption*{$G_{13.265299}$}
		\label{figure: 13_265299}
	\end{subfigure}\hfill
	\begin{subfigure}{.33\textwidth}
		\centering
		\includegraphics[trim={0 0 0 490},clip,height=100px,width=100px]{./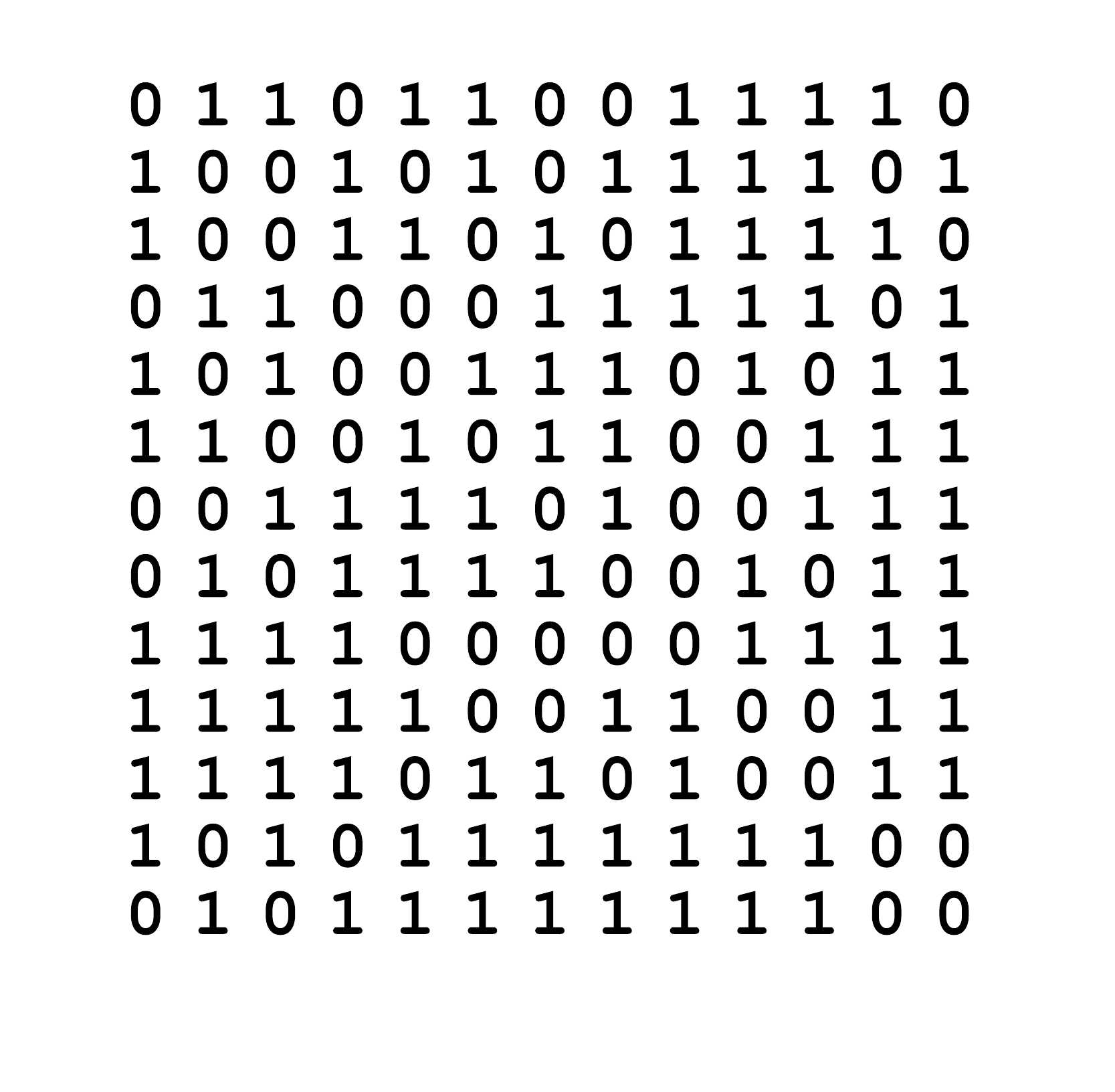}
		\vspace{-1.5em}
		\caption*{$G_{13.299797}$}
		\label{figure: 13_299797}
	\end{subfigure}
	
	\begin{subfigure}{.33\textwidth}
		\centering
		\includegraphics[trim={0 0 0 490},clip,height=100px,width=100px]{./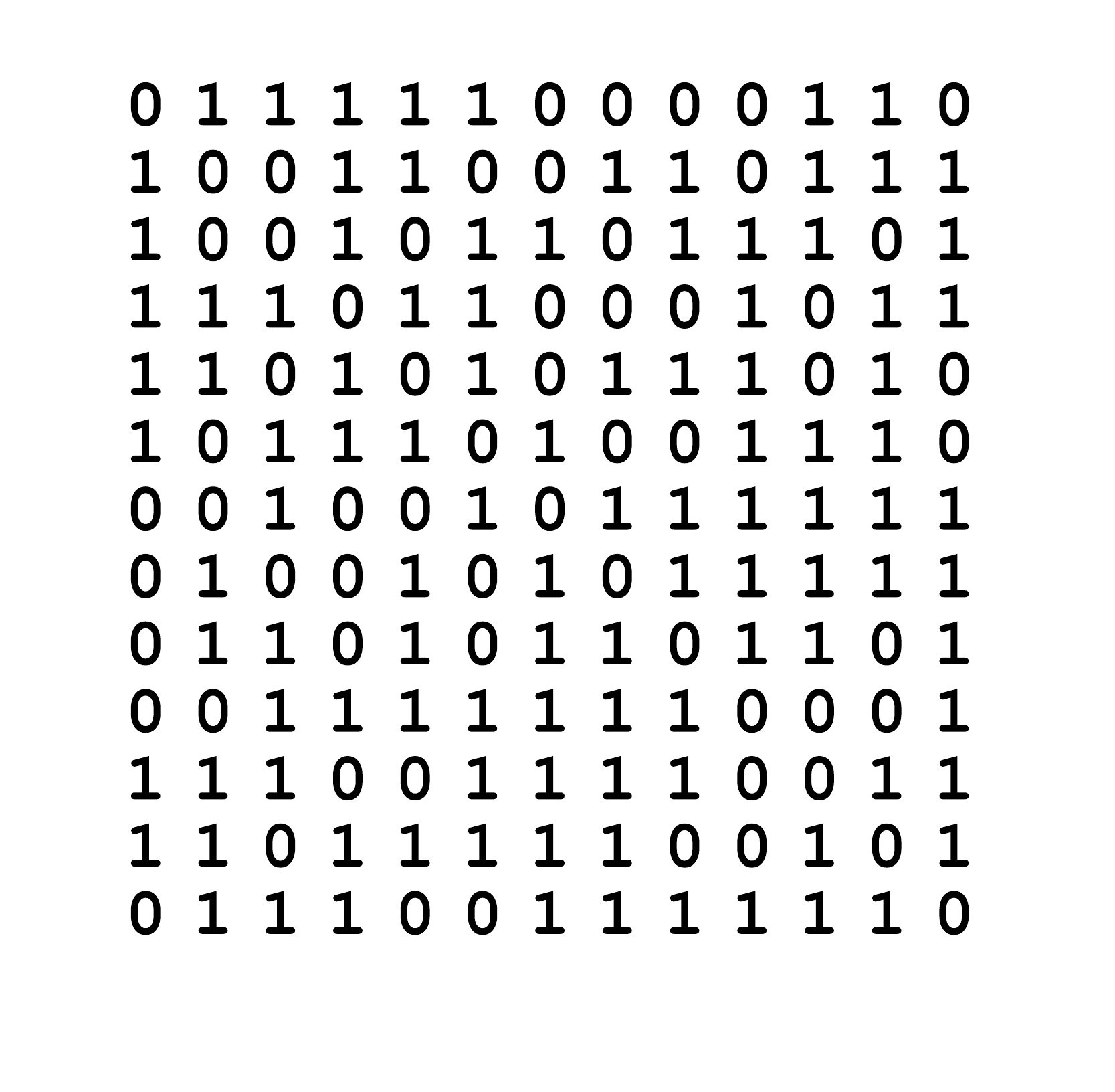}
		\vspace{-1.5em}
		\caption*{$G_{13.301368}$}
		\label{figure: 13_301368}
	\end{subfigure}\hfill
	\begin{subfigure}{.33\textwidth}
		\centering
		\includegraphics[trim={0 0 0 490},clip,height=100px,width=100px]{./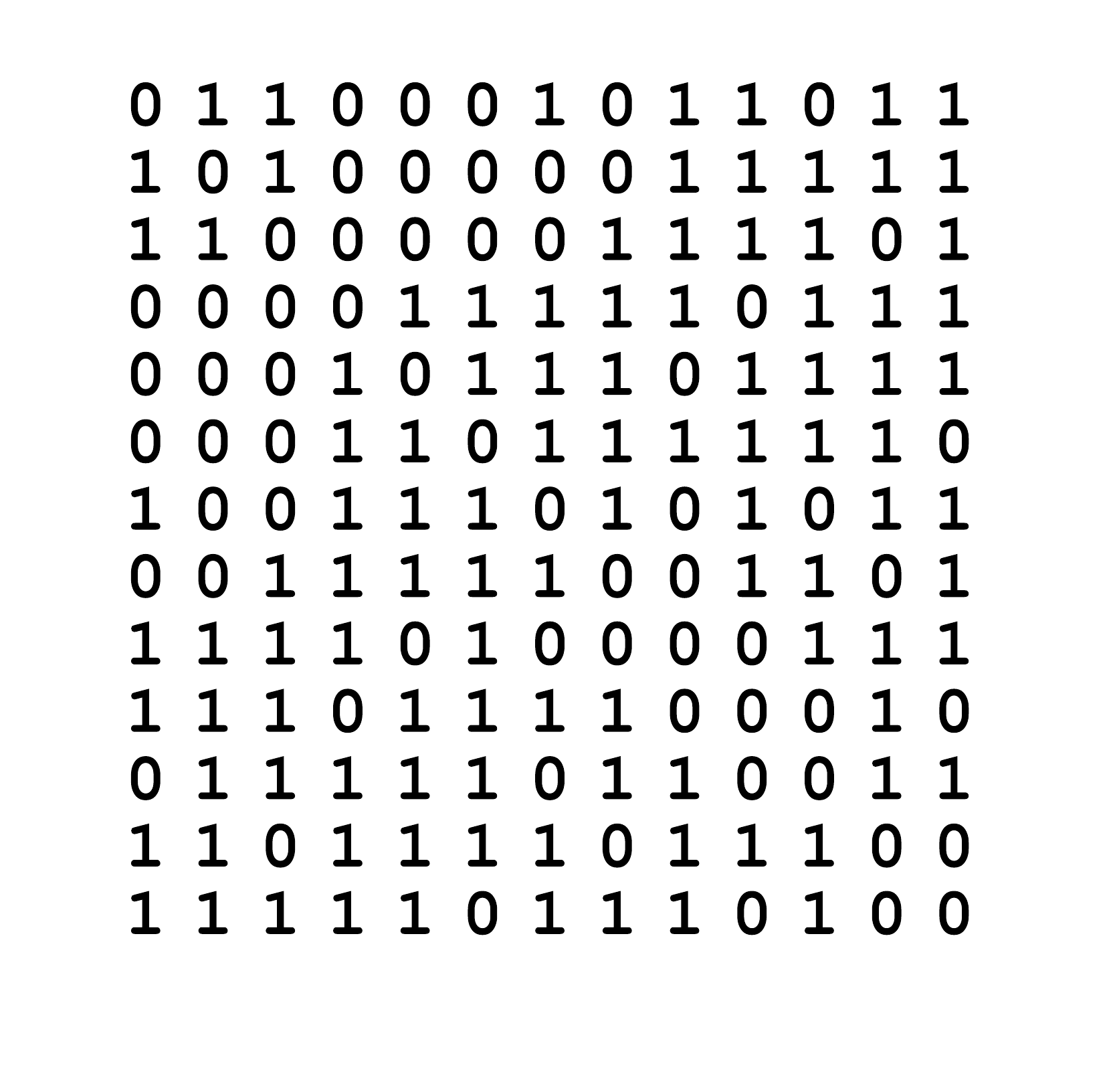}
		\vspace{-1.5em}
		\caption*{$G_{13.302151}$}
		\label{figure: 13_302151}
	\end{subfigure}\hfill
	\begin{subfigure}{.33\textwidth}
		\centering
		\includegraphics[trim={0 0 0 490},clip,height=100px,width=100px]{./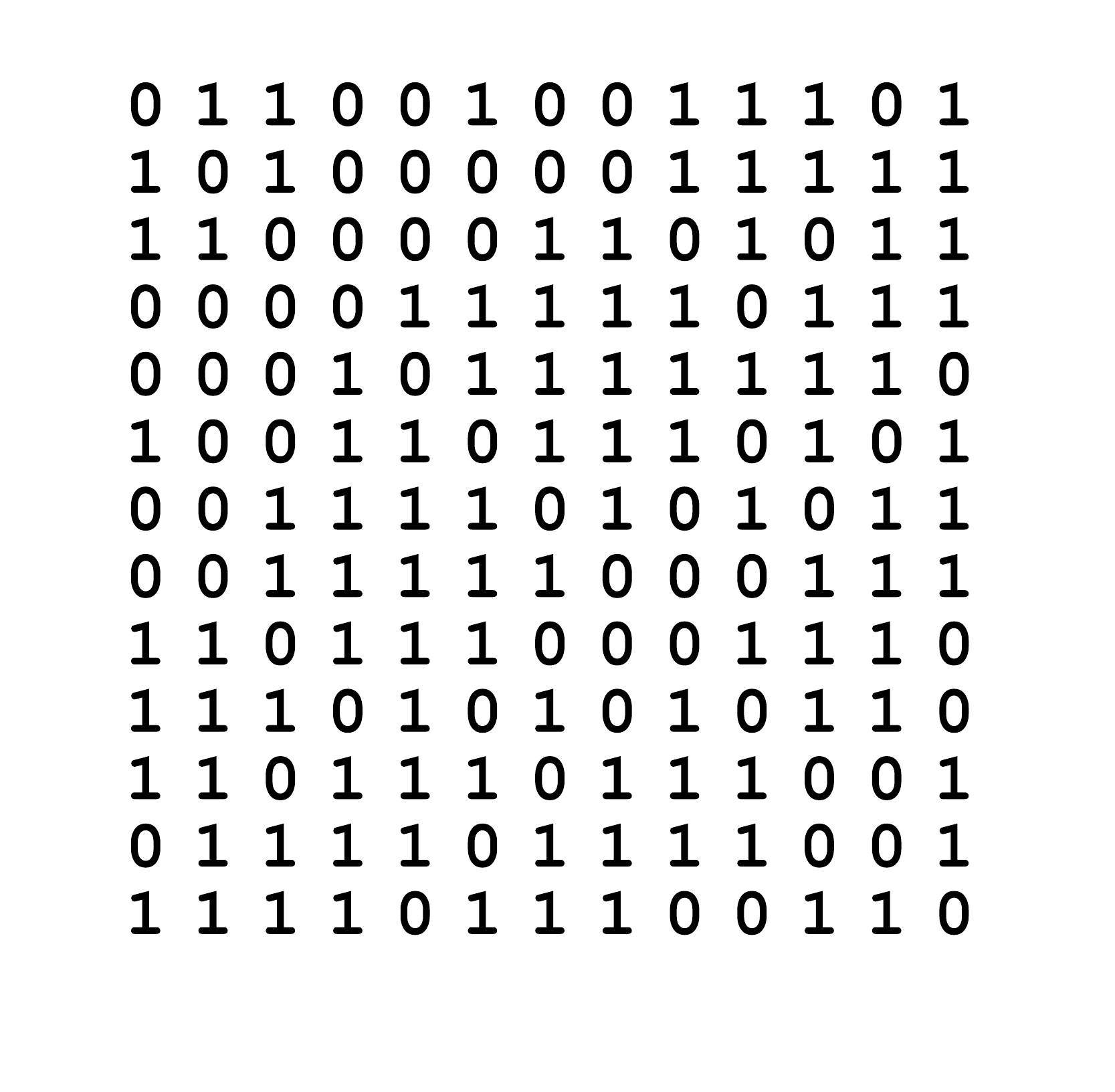}
		\vspace{-1.5em}
		\caption*{$G_{13.302764}$}
		\label{figure: 13_302764}
	\end{subfigure}
	
	\begin{subfigure}{.33\textwidth}
		\centering
		\includegraphics[trim={0 0 0 490},clip,height=100px,width=100px]{./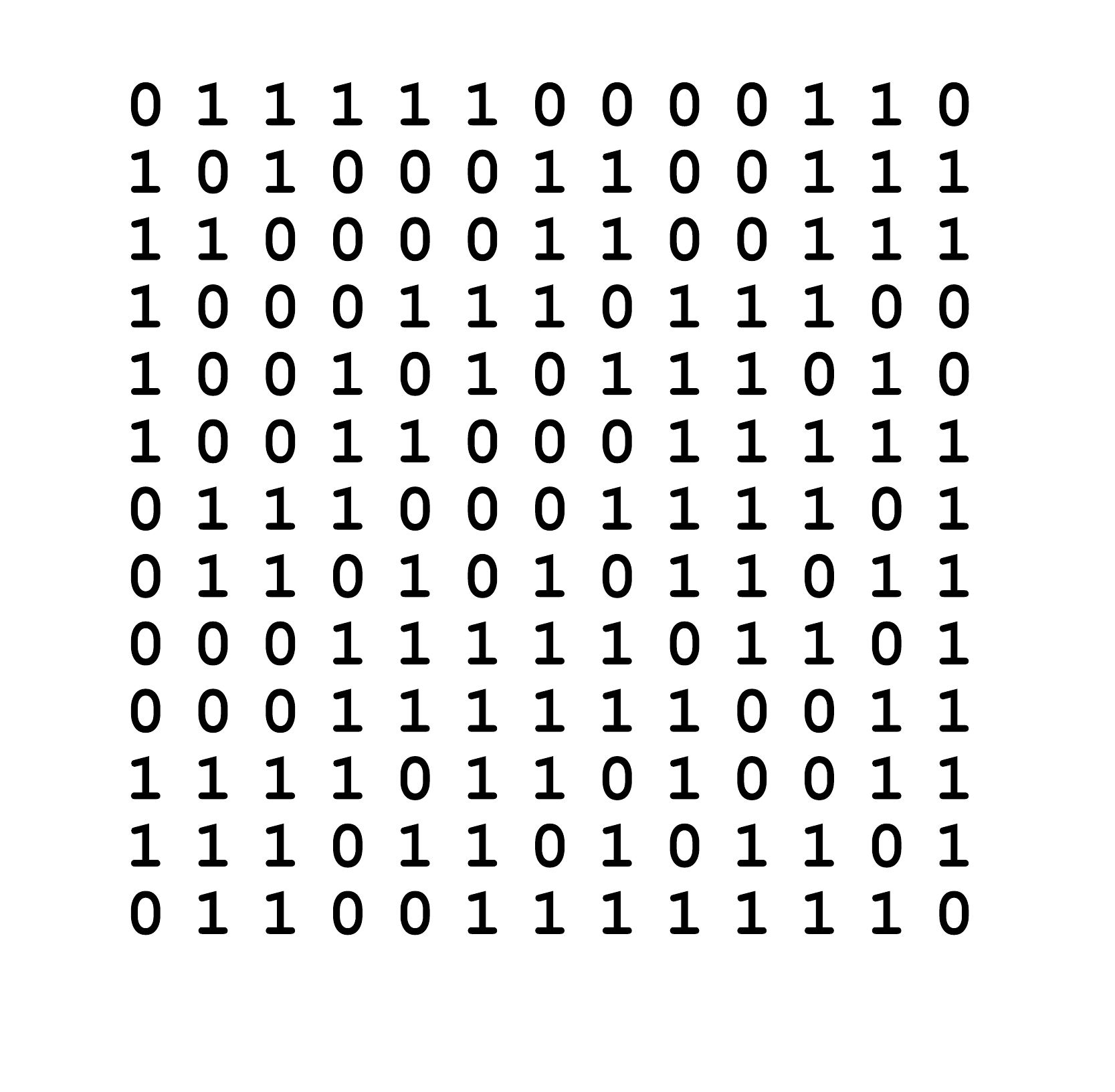}
		\vspace{-1.5em}
		\caption*{$G_{13.305857}$}
		\label{figure: 13_305857}
	\end{subfigure}\hfill
	\begin{subfigure}{.33\textwidth}
		\centering
		\includegraphics[trim={0 0 0 490},clip,height=100px,width=100px]{./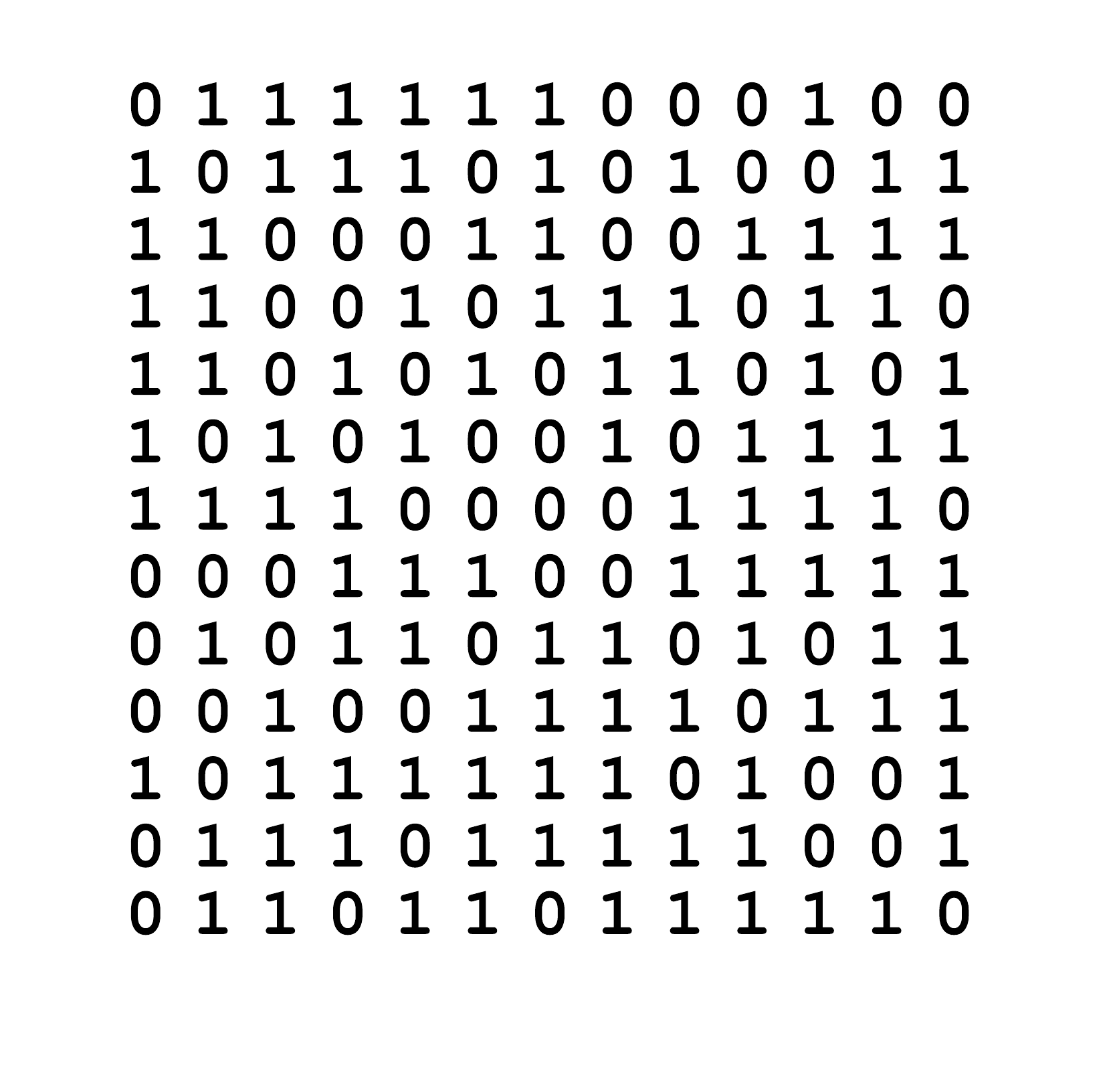}
		\vspace{-1.5em}
		\caption*{$G_{13.306448}$}
		\label{figure: 13_306448}
	\end{subfigure}\hfill
	\begin{subfigure}{.33\textwidth}
		\centering
		\includegraphics[trim={0 0 0 490},clip,height=100px,width=100px]{./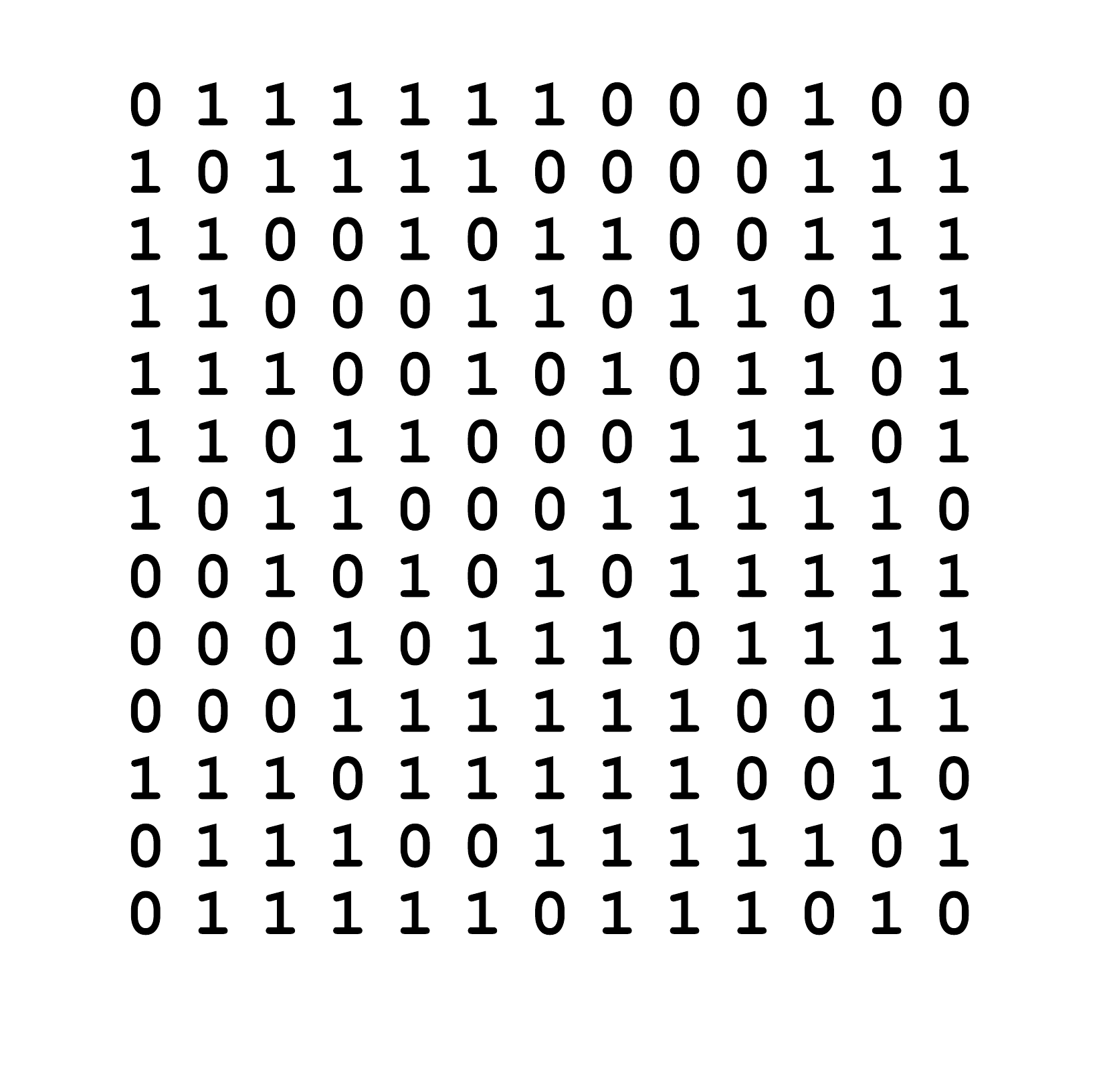}
		\vspace{-1.5em}
		\caption*{$G_{13.306460}$}
		\label{figure: 13_306460}
	\end{subfigure}
	
	\begin{subfigure}{\textwidth}
		\centering
		\includegraphics[trim={0 0 0 490},clip,height=100px,width=100px]{./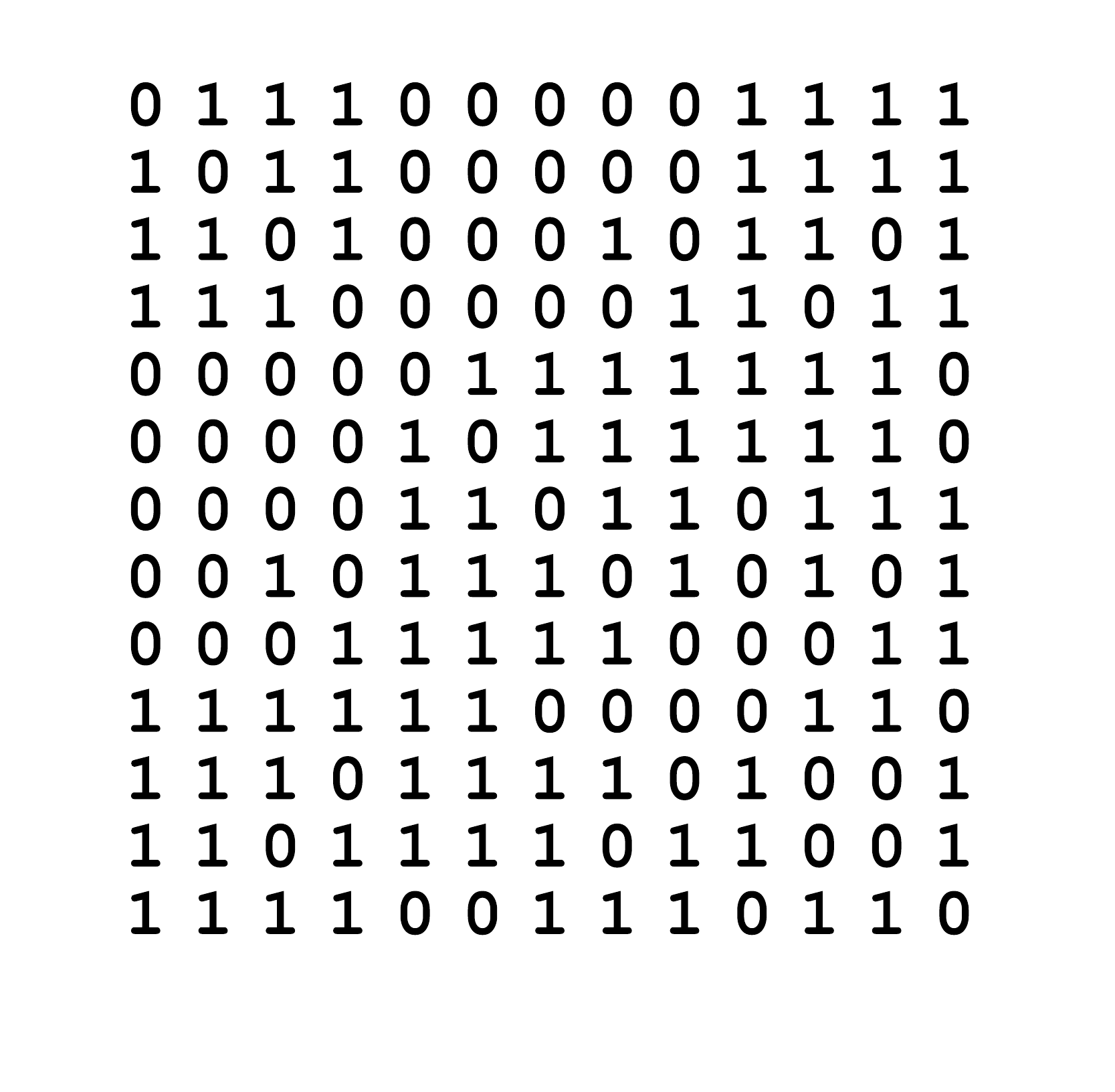}
		\vspace{-1.5em}
		\caption*{$G_{13.306470}$}
		\label{figure: 13_306470}
	\end{subfigure}
	
	\caption{13-vertex minimal $(3, 3)$-Ramsey graphs\\ with independence number 2}
	\label{figure: 13_a2}
\end{figure}


\clearpage

\centerline{REFERENCES}

\small

\bibliographystyle{plain}

\bibliography{main}


\vspace{12pt}
\baselineskip10pt

\vskip10pt
\begin{flushright}
\end{flushright}
\vskip20pt
\footnotesize
\begin{flushleft}
Faculty of Mathematics and Informatics\\
"St.~Kl.~Ohridski" University of Sofia\\
5 ~J.~Bourchier blvd., BG-1164 Sofia\\
BULGARIA \\
e-mail: asbikov@fmi.uni-sofia.bg
\end{flushleft}

\end{document}